\documentclass[11pt]{amsart}
\usepackage[T1]{fontenc}
\usepackage[ansinew]{inputenc}
\usepackage{amssymb,amsopn,amsmath,amsthm,authblk,hyperref, cleveref,lipsum,scrextend, mathrsfs,array,graphicx,xcolor,bbm,enumerate,wasysym,yfonts, ulem,dsfont}
\usepackage{stmaryrd}
\usepackage{subcaption}
\usepackage{verbatim}
\usepackage[all]{xy}
\usepackage{tikz}
\usepackage[displaymath, mathlines]{lineno}
\usepackage{fullpage}
\usepackage[foot]{amsaddr}
\usepackage{xifthen}
\usepackage{xstring}
\usepackage{xparse}
\usepackage{float}
\usepackage{ulem}

\newtheorem{thm}{Theorem}[section]
\newtheorem{lemma}[thm]{Lemma}
\newtheorem{rem}[thm]{Remark}
\newtheorem{defn}[thm]{Definition}
\newtheorem{claim}[thm]{Claim}
\newtheorem{prop}[thm]{Proposition}
\newtheorem{cor}[thm]{Corollary}

\newcommand{\eqd}{\stackrel{(d)}{=}}
\newcommand{\sh}{\mathsf{ S}}
\newcommand{\Ball}[2]{\mathcal{B}_{#2}(#1)}
\newcommand{\Crt}{\mathcal{T}}

\newcommand{\TT}{\mathcal{T}}
\newcommand{\ttm}{\mathfrak{t}^{\mathsf{M}}}

\newcommand{\Q}{\mathbb Q}

\newcommand{\QQQ}{\mathfrak Q}
\newcommand{\HHH}{\mathbf{H}}
\newcommand{\Z}{\mathbb Z}
\newcommand{\N}{\mathbb N}
\newcommand{\R}{\mathbb R}
\newcommand{\E}{\mathbb E}
\newcommand{\UIHPQ}{\text{UIHPQ}}
\newcommand{\e}{\mathbbm e}

\renewcommand{\S}{\mathbf S}
\renewcommand{\P}{\mathbb P}
\newcommand{\Tr}[2]{\mathcal{R}_{#2}(#1)}
\renewcommand{\1}{\mathbf 1}
\newcommand{\mm}{\mathfrak{m}}

\newcommand{\pp}{\mathfrak{p}}
\newcommand{\qq}{\mathfrak{q}}
\renewcommand{\tt}{\mathfrak{t}}
\newcommand{\rqq}{\mathbf{q}}

\newcommand{\rqs}{\mathbf{q}_s}

\newcommand{\rtt}{\mathbf{t}}
\newcommand{\Rtt}{\mathbf{T}}
\newcommand{\rbb}{\mathbf{b}}
\newcommand{\Rbb}{\mathbf{B}}
\newcommand{\rtq}{\mathbf{t}^{\mathsf{M}}}

\newcommand{\Rmm}{\mathbf{M}}
\newcommand{\Smm}{\mathcal{S}}
\newcommand{\Stm}{\mathbf{T}^{\mathsf{S}}}

\newcommand{\dd}{\mathsf{d}}

\newcommand{\rtm}{\mathbf{t}^{\mathsf{M}}}

\newcommand{\gtt}{\gamma^{\rtt}}
\newcommand{\Gtt}{\gamma^{\Rtt}}
\newcommand{\Gs}{\gamma^{\mathbf{S}}}
\newcommand{\gm}{\gamma^{\rqq}}
\newcommand{\Gm}{\gamma^{\Rmm}}
\newcommand{\gb}{\gamma^{\rbb}}
\newcommand{\Gb}{\gamma^{\Rbb}}
\newcommand{\Gh}{\gamma^{\HHH}}
\newcommand{\Gr}{\gamma^{\Rmm}}

\newcommand{\qt}[2]{\mathbf{q}_{#1}^{\mathsf{T},#2}}

\newcommand{\bb}{\mathfrak{b}}
\def\cro#1{\llbracket#1\rrbracket}

\makeatletter
\g@addto@macro{\endabstract}{\@setabstract}
\newcommand{\authorfootnotes}{\renewcommand\thefootnote{\@fnsymbol\c@footnote}}
\makeatother

\let \epsilon \varepsilon

\numberwithin{equation}{section}

\crefformat{footnote}{#2\footnotemark[#1]#3}

\title{On the triviality of the Shocked-map}
\author{Luis Fredes$^*$}
\address{$^*$Univ. Bordeaux, CNRS, Bordeaux INP, IMB, UMR 5251, F-33400 Talence, France.}
\author{Avelio Sepúlveda$^\dagger$}
\address{$^\dagger$ Universidad de Chile,  Centro de Modelamiento Matemático (AFB170001), UMI-CNRS 2807, Beauchef 851, Santiago, Chile.}
\begin{document}
	\maketitle
	\begin{abstract}
	The (non-spanning) tree-decorated quadrangulation is a random pair formed by a quadrangulation and a subtree chosen uniformly over the set of pairs with prescribed size. In this paper we study the tree-decorated quadrangulation in the critical regime: when the number of faces of the map, $f$, is proportional to the square of the size of the tree. We show that with high probability in this regime,  the diameter of the tree is between $o(f^{1/4})$ and  $f^{1/4}/\log^\alpha(f)$, for $\alpha >1$. Thus after scaling the distances by $f^{-1/4}$, the critical tree-decorated quadrangulation converges to a Brownian disk where the boundary has been identified to a point. These results imply the triviality of the shocked map: the metric space generated by gluing a Brownian disk with a continuous random tree.
	\end{abstract}
	\section{Introduction} 	
\begin{figure}[h!]
	\includegraphics[width=0.8\textwidth]{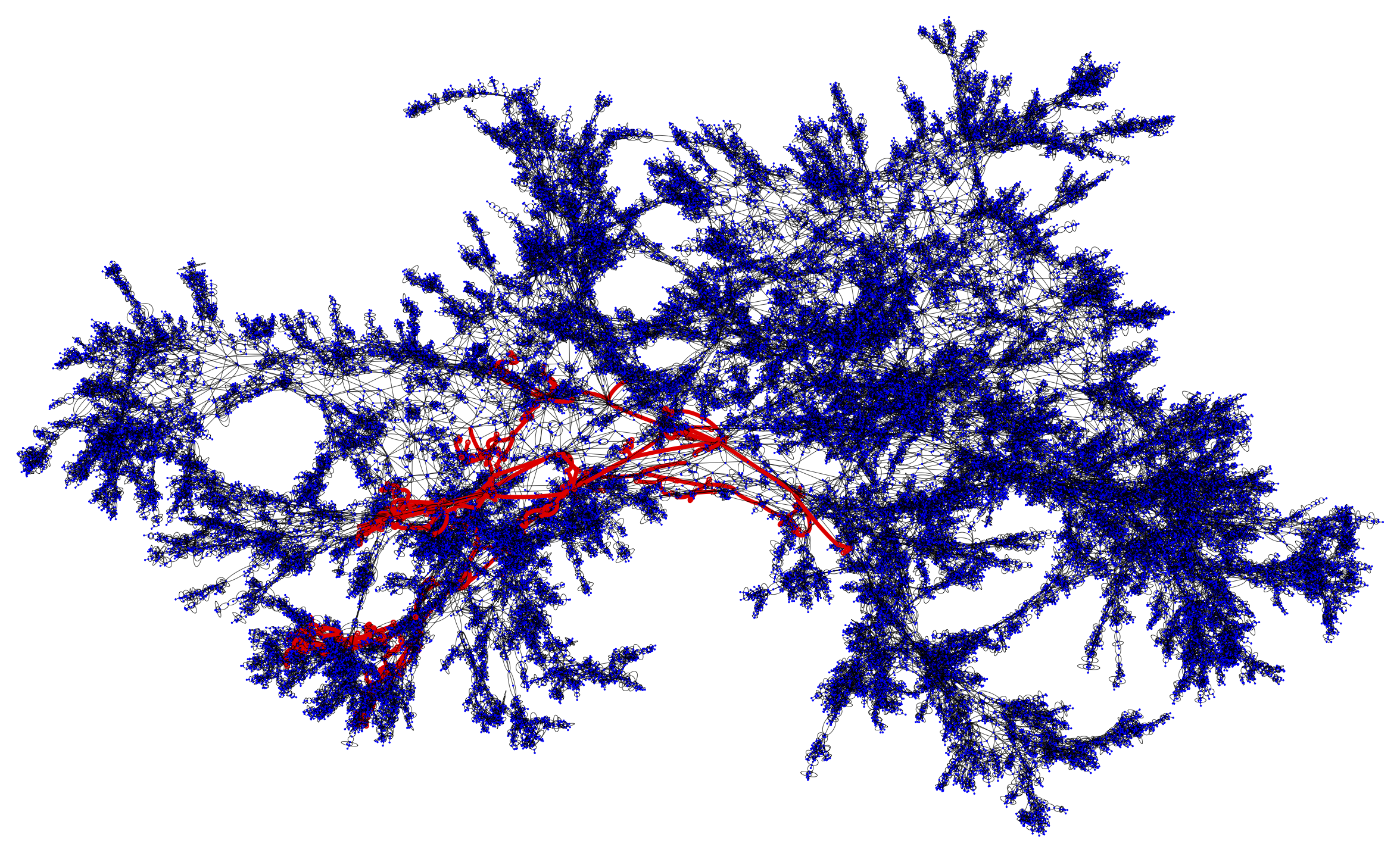}
	\caption{A simulation, based on the bijection introduced in \cite{FS19}, of a uniformly chosen tree decorated quadrangulation.}
\end{figure} 

The (non-spanning) tree-decorated map was introduced in \cite{FS19}. It consists in a uniformly chosen couple $(\rqq,\rtq)$, where $\rqq$ is a planar quadrangulation with $f$ faces and $\rtq$ is a subtree of $\rqq$ with $k$ edges containing the root of $\rqq$. The main reason for its introduction was to propose a model that interpolates between the uniformly chosen planar quadrangulation and the spanning-tree decorated quadrangulation. This is interesting as these two models belong to different universality classes, as shown in \cite{She,GHS2,DGExpB,GHS}.

In this work, we discuss the scaling limit of the tree-decorated map when $k \propto \sqrt f$. Let us first justify why this regime should be critical.

In \cite{FS19}, we introduced a bijection between the set of tree-decorated quadrangulations with $f$ faces decorated by a tree with $k$ edges, and the Cartesian product of two sets: the set of planar trees of size $k$ and the set of planar quadrangulation with a simple boundary with $f$ internal faces and boundary of size $2k$. The bijection is simple and can be informally understood as a ``gluing'' of the boundary using the equivalence relationship defined by the tree. This bijection is close to the one found in \cite{ BJ11, GM2,CC19} to study the planar maps decorated by a self-avoiding walk.

The bijection of \cite{FS19} gives interesting information about the possible scaling limits of tree-decorated maps. For example, as the tree chosen is always uniform, it is clear that the tree $\rtm$ (properly normalised) converges to a CRT as long as its size, $k$, goes to infinity. 

The planar map $\rqq$, however, is trickier. In this case, it is interesting to study the case of uniform quadrangulation with a simple boundary $\rqs$ that appears in the bijection. We start with a more elementary case : the uniform quadrangulation $\rqq_b$ with general boundary of size $2k$. It is known thanks to\cite{BETQ,GM2}, that this model undergoes a phase transition when $k\asymp \sqrt f$ 
\begin{align*}
f^{-\alpha}\rqq_b \asymp \begin{cases}
\text{Brownian map} & \text{if } k\ll\sqrt{f}\\
\text{Brownian disk} & \text{if } k \asymp \sqrt{f}\\
\text{Continuum random tree} & \text{if } k\gg \sqrt{f},
\end{cases}
\end{align*}
where we write $cM = (M,c\dd_M)$ for a rescaled discrete metric space $(M,\dd_M)$.
The simple boundary case is expected to be similar. The first two points are treated in \cite{GM,BCFS},  however the third point is still not proven.

The phase transition of quadrangulations with a simple boundary $\rqs$, allows us to conjecture that the bijection of \cite{FS19} also works in the scaling limit when $k \approx \sigma \sqrt f$. In fact, we call \textit{the shocked map} the result of the bijection of \cite{FS19} in the continuum (see Definition \ref{SMdef}). Thus, a shocked map is composed by a pair $(\Smm,\Stm)$ where $\Smm$ is a continuous compact metric space and $\Stm$ is a subset of $\Smm$.

\subsection{Main results}

The main result of this paper is that the shocked map is trivial. More precisely after doing the glueing, the metric space $\Smm$ does not reduce to one-point, but the set $\Stm$ does; i.e. after the glueing the tree contracts into a point. This result is summarized in the following theorem.
\begin{thm}\label{t.triviality-SM}
	Let $(\Smm,\Stm)$ be a shocked map. One has that $\Smm$ is the Brownian disk where the boundary is identified to a point and $\Stm$ corresponds to that point.
\end{thm}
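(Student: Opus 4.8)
The plan is to transfer the discrete estimates announced in the abstract — that the diameter of the decorating tree $\rtm$ is $o(f^{1/4})$ with high probability — through the bijection of \cite{FS19} and the known scaling limits of quadrangulations with a simple boundary, and then to identify the limit. First I would recall that, under the bijection, the critical tree-decorated quadrangulation $(\rqq,\rtm)$ with $k\sim\sigma\sqrt f$ corresponds to a pair consisting of a uniform plane tree $\tt_k$ with $k$ edges and a uniform quadrangulation with simple boundary $\rqs$ with boundary length $2k$ and $f$ internal faces. Rescaling edge-lengths by $f^{-1/4}$, the tree $\tt_k$ converges (as a metric space) to the CRT scaled by a constant, hence has diameter of order $\sqrt k \cdot f^{-1/4}\asymp f^{-1/4+1/4/?}$ — more precisely of order $k^{1/2}f^{-1/4}\asymp f^{-1/4}$ after the normalisation used for the boundary; the point is that along the gluing locus the intrinsic distances in $\rqq$ are controlled by distances in $\tt_k$, which is where the diameter bound $o(f^{1/4})$ (equivalently: the rescaled tree has vanishing diameter) enters. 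Meanwhile $f^{-1/4}\rqs$ converges to the Brownian disk of perimeter $\sigma$ by \cite{GM,BCFS}. The gluing that defines $\rqq$ from $\rqs$ identifies boundary points according to the contour equivalence induced by $\tt_k$; since the rescaled tree collapses, in the limit this equivalence identifies the \emph{entire} boundary to a single point, and $\Stm$ (the image of the tree) is exactly that point.

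The key steps, in order, are: (i) set up the precise correspondence in the continuum, i.e. define the shocked map $(\Smm,\Stm)$ via Definition \ref{SMdef} as the quotient of a Brownian disk $\mathscr D_\sigma$ by the equivalence relation "$x\sim y$ if the boundary points $x,y$ are identified by the Brownian tree glued along the boundary"; (ii) prove a Gromov–Hausdorff stability statement: if $(D_n,\partial D_n)\to(\mathscr D_\sigma,\partial\mathscr D_\sigma)$ and the rescaled decorating trees $T_n$ have $\mathrm{diam}(T_n)\to 0$, then the glued spaces $D_n/{\sim_{T_n}}$ converge to $\mathscr D_\sigma/(\partial\mathscr D_\sigma)$; (iii) invoke the discrete diameter bound from the body of the paper — $\mathrm{diam}(\rtm)\le f^{1/4}/\log^\alpha f$ with high probability, so $f^{-1/4}\mathrm{diam}(\rtm)\to 0$ — together with the simple-boundary analogue of \cite{BETQ,GM2} in the $k\asymp\sqrt f$ regime to feed hypothesis (ii); (iv) conclude that the limit of $f^{-1/4}\rqq$ is the Brownian disk with its boundary contracted to a point, and that this limit is (by uniqueness of the scaling limit and by Definition \ref{SMdef}) the shocked map, with $\Stm$ the contraction point.

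The main obstacle is step (ii): proving that the gluing map is Gromov–Hausdorff continuous at the degenerate pair $(\mathscr D_\sigma, \text{point-tree})$. One must check that contracting the boundary of a Brownian disk does not create new shortcuts that would lower distances between interior points in an uncontrolled way, and that it does not split the space — i.e. that the quotient pseudo-distance $\bar\dd(x,y)=\inf\{\dd(x,y),\ \dd(x,\partial)+\dd(\partial,y)\}$ remains a genuine distance inducing the quotient topology, with diameter still bounded below. This requires quantitative control of the Brownian disk metric near its boundary (e.g. that a positive proportion of interior mass stays at macroscopic distance from $\partial\mathscr D_\sigma$), for which one can cite boundary-behaviour estimates for Brownian disks, and then a routine $\epsilon/3$ argument transferring the discrete gluing distance to the continuum one. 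A secondary technical point is checking that the discrete metric on $\rqq$ is comparable, up to the boundary-gluing correction, to the metric on $\rqs$; this is essentially built into the bijection of \cite{FS19} and the fact that paths in $\rqq$ crossing the tree cost at most $\mathrm{diam}(\rtm)=o(f^{1/4})$, which is negligible after rescaling — so it reduces to bookkeeping rather than a genuine difficulty.
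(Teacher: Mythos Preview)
Your argument has a genuine gap at step (iv). You establish that the rescaled tree-decorated quadrangulation converges to the Brownian disk with its boundary collapsed to a point, but you never establish that it converges to the shocked map as intrinsically defined in Definition~\ref{d.sm}. The shocked map is \emph{defined} as the metric quotient of the Brownian disk by the CRT contour equivalence on its boundary; it is not defined as a scaling limit, so the phrase ``by uniqueness of the scaling limit and by Definition~\ref{SMdef}'' does no work. To identify the two you would need to know that the discrete gluing converges to the continuous gluing, i.e.\ that metric quotients commute with Gromov--Hausdorff limits here, and that is exactly the issue. More concretely: from your step~(ii)--(iii) you get $d^{BD/\partial}=\lim f^{-1/4}d_{\rqq}$, and by approximating any finite shocked-map chain by a discrete chain one gets $\limsup f^{-1/4}d_{\rqq}\le d^{SM}$; combining these yields only the trivial inequality $d^{BD/\partial}\le d^{SM}$ (which already follows from the fact that the CRT equivalence is finer than identifying all of $\partial$). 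The content of the theorem is the reverse inequality, equivalently $\mathrm{diam}(\Stm)=0$, and for that you would need $\liminf f^{-1/4}d_{\rqq}\ge d^{SM}$, a lower-semicontinuity of the gluing that your stability lemma (ii) does not provide and that fails in general because near-optimal discrete chains may use an unbounded number of tree-jumps.

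The paper proceeds entirely differently and never passes through the discrete to prove Theorem~\ref{t.triviality-SM}. It first works in the infinite-volume continuum (Brownian half-plane glued to a bi-infinite CRT) and shows, via Kingman's subadditive ergodic theorem applied to the shift along the spine together with the exact scale invariance $d_{\Rmm}(\kappa_0,\kappa_u)/u\stackrel{(d)}{=}d_{\Rmm}(\kappa_0,\kappa_1)$, that the limiting constant must be~$0$; rerooting invariance then propagates this to the whole tree image (Section~\ref{s.ic}). The finite-volume statement is obtained by an absolute-continuity transfer: explorations of the CRT and of the Brownian disk near the boundary are absolutely continuous with respect to the corresponding explorations of the infinite CRT and the Brownian half-plane, so the diameter-$0$ event, which is an infinite-volume almost-sure event, has probability~$1$ in finite volume as well (Section~\ref{s.fc}). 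As a side remark, you quote the diameter estimate as $\mathrm{diam}(\rtm)\le f^{1/4}/\log^{\alpha}f$; in the paper that quantity is the \emph{lower} bound, and the upper bound you actually need is $\mathrm{diam}(\rtm)\le\epsilon f^{1/4}$ for every $\epsilon>0$ --- which, incidentally, is itself derived from the infinite-volume continuum result rather than being an independent discrete input.
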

As a consequence of this theorem, we can describe the scaling limit of the critical tree-decorated map and obtain an upper bound for the asymptotic behavior of the diameter of the tree with respect to the metric of the map. This upper bound is tight up to a log correction as seen in the following theorem.
\begin{thm}\label{t.diam}
	Let $(\rqq_f, \rtq_\sigma)$ be a tree decorated quadrangulation where $\rqq_f$ has $f$ faces and the tree $\rtq_\sigma$ is of size $\sigma \sqrt f$, with $0<\sigma<\infty$.  Then, $(8f/9)^{-1/4}(\rqq_f, \rtq_\sigma)$ converges in law for the Gromov-Hausdorff-Uniform topology \footnote{The definition of this topology is given in \Cref{sec:GH}. For more information see for example \cite{BBI}} towards $(\Smm,\Stm)$.
	Furthermore, for any $\alpha>1$ and $\epsilon>0$, such that with high probability as $f\to \infty$.
	\begin{align}\label{e.bounds_finite_discrete}
		\frac{f^{1/4}}{(\log(f))^\alpha}\leq diam(\rtm_{\sigma})\leq \epsilon f^{1/4}.
	\end{align}
\end{thm}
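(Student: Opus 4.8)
The plan is to establish the three assertions of \Cref{t.diam} in turn: the GHU convergence, then the upper bound in \eqref{e.bounds_finite_discrete}, then the lower bound. For the convergence I would start from the bijection of \cite{FS19}, which realizes $(\rqq_f,\rtm_\sigma)$ as the gluing of a uniform quadrangulation $\rqs$ with simple boundary of perimeter $2\sigma\sqrt f$ and $f$ internal faces onto a uniform plane tree with $\sigma\sqrt f$ edges, the boundary edges of $\rqs$ being identified along the contour exploration of the tree (so that $\rtm_\sigma$ is the image of $\partial\rqs$). By the results recalled in the introduction, $(8f/9)^{-1/4}\rqs$ converges to a Brownian disk \cite{GM,BCFS}, while the rescaled contour of the uniform tree converges to a Brownian excursion, hence the rescaled tree to the CRT; these convergences are joint by independence. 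To transfer them to the glued object one uses, on one side, that $\dd_{\rqq_f}\le\dd_{\rqs}$ since the quotient only adds identifications — giving tightness and an upper bound on limiting distances — and, on the other side, that every geodesic of $\rqq_f$ lifts to a concatenation of paths in $\rqs$ joined at boundary vertices identified by the tree, which bounds limiting distances from below by the corresponding ``gluing distances''. This identifies the GHU limit as the shocked map $(\Smm,\Stm)$ of \Cref{SMdef}, and \Cref{t.triviality-SM} then tells us that $\Smm$ is the Brownian disk with boundary contracted to a point and $\Stm$ is that point.

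The upper bound is then immediate: since $(8f/9)^{-1/4}\rtm_\sigma$ converges (in the Gromov--Hausdorff sense) to the singleton $\Stm$, its diameter tends to $0$ in probability, i.e.\ $\mathrm{diam}(\rtm_\sigma)=o(f^{1/4})$ with high probability, so $\mathrm{diam}(\rtm_\sigma)\le \epsilon f^{1/4}$ with high probability for every fixed $\epsilon>0$.

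The lower bound is the only part of \Cref{t.diam} that is not a soft consequence of \Cref{t.triviality-SM} (the convergence gives $\mathrm{diam}(\rtm_\sigma)=o(f^{1/4})$ but no rate), and it is where I expect the genuine work — and the appearance of $(\log f)^\alpha$ with $\alpha>1$ — to lie. I would argue conditionally on the tree, so that $\rqs$ is uniform among quadrangulations with simple boundary $2\sigma\sqrt f$ and $f$ internal faces and $\rqq_f=\rqs/M$, where $M$ is the (now deterministic) non-crossing matching of the boundary edges of $\rqs$ coming from the tree's contour. Set $r=f^{1/4}/(\log f)^\alpha$. Since $\mathrm{diam}(\rtm_\sigma)\le r$ forces the whole quotiented boundary into $B_{\rqq_f}([v_0],r)$ for a fixed boundary vertex $v_0$, it suffices to bound $\P\big(\partial\rqs\subseteq B_{\rqq_f}([v_0],r)\big)$. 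A path of length $\le r$ in $\rqq_f$ lifts to a concatenation of arcs in $\rqs$ of total length $\le r$ joined by ``free'' jumps along edges of $M$; thus the set of boundary vertices reachable from $v_0$ with arc-budget $r$ is generated by alternately travelling inside $\rqs$ and jumping along $M$. Using uniform-in-scale distance estimates for quadrangulations with a boundary, with high probability the $\rqs$-ball of radius $L$ around any boundary vertex meets $\partial\rqs$ along an arc of length only $O(L^2(\log f)^{C})$, so reaching all $\asymp\sqrt f$ boundary vertices within budget $r$ would require $M$ to ``spread'' the reachable set by a factor $\asymp(\log f)^{2\alpha}$; the nesting structure of $M$ — realized by a \emph{uniform} plane tree, whose subtree sizes are under control — should preclude this, while the rare events on which the $\rqs$-estimates fail have probability $e^{-c(\log f)^{\alpha}}$. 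A union bound over the $\asymp f$ choices of pair of boundary vertices then gives a total error $\lesssim f\,e^{-c(\log f)^{\alpha}}$, which tends to $0$ precisely when $\alpha>1$. The main obstacle is exactly this last step: controlling uniformly how much the free jumps along $M$ can help, i.e.\ ruling out that one reaches almost all of $\partial\rqs$ within an $o(f^{1/4})$ arc-budget; everything else, including the continuity of the gluing used in the convergence, reduces to comparisons that are routine once \Cref{t.triviality-SM} is available.
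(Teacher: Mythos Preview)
Your convergence argument does not close. From $\dd_{\rqq_f}\le\dd_{\rqs}$ you get only that subsequential limits satisfy $d_*\le \dd^{BD}$, and from ``geodesics lift to concatenations'' you claim $d_*\ge \dd^{SM}$; but the sandwich $\dd^{SM}\le d_*\le \dd^{BD}$ does not determine $d_*$ (these two metrics differ on the boundary). Moreover the lifting step itself is delicate: a discrete geodesic may cross the tree $O(f^{1/4})$ times, so the number of concatenation pieces is unbounded and one cannot simply pass each piece to the limit --- quotient metrics are not lower-semicontinuous under Gromov--Hausdorff convergence in general. The paper avoids both issues by reversing your order of implications. It first establishes \eqref{e.bounds_finite_discrete} (this is \Cref{p.TDM_small_diameter}) and \emph{then} deduces the GHU convergence from the upper bound alone: once $\mathrm{diam}(\rtm_\sigma)=o(f^{1/4})$ is known, the decoration collapses to a single point in any subsequential limit, and since paths disjoint from the decoration keep their length, the limit is pinned down as the Brownian disk with its boundary identified to a point --- which by \Cref{t.triviality-SM} is the shocked map. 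No control of the gluing operation in the limit is needed.

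Your route to the lower bound in \eqref{e.bounds_finite_discrete} is also quite different from the paper's and, as you acknowledge, stops at the crucial step of controlling how much the matching $M$ can spread the reachable set. The paper does not work in finite volume with union bounds over boundary pairs; instead it passes to the infinite-volume local limit (the ITQ of Section~\ref{S.ll}) and runs a layered peeling along the spine of the infinite tree. The key estimate (\Cref{l.law_overshoot}, via \Cref{c.real_q}) is that each peeling layer advances along the spine by an amount with conditional tail $\lesssim a^{-1}$; the spine exploration is therefore dominated by a sum of i.i.d.\ variables in the domain of attraction of an asymmetric Cauchy law, and the exponent $\alpha>1$ comes out of the $\ell\log\ell$ centering of such sums (proof of \Cref{p.max_jump}). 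The transfer back to finite volume is by absolute continuity of the finite exploration against the infinite one (\Cref{l.nuTree} for the tree side, \Cref{l.control_map_with_boundary} for the map side), not by a union bound.
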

The bounds in \eqref{e.bounds_finite_discrete}  are proven in Proposition \ref{p.TDM_small_diameter}. Theorem \ref{t.diam} follows directly from this. 
\begin{proof}[Proof of the convergence assuming \eqref{e.bounds_finite_discrete}] The sequence is pre-compact thanks to Remark \ref{compGHU}. Now, notice that when renormalising by $f^{1/4}$, the diameter of the tree $\rtm_{\sigma}$ converges to 0 meaning that it contracts to one point as $f\to\infty$ and notice that every path having empty intersection with the decoration keeps its length. This let us upper and lower bound the distances by the distances of the shock map as $f\to\infty$, giving the equality.
	
\end{proof}
To prove Theorem \ref{t.triviality-SM}, we first work in the continuous infinite volume version of the model. We start with $\HHH$ a Brownian half-plane and $\Stm$ a bi-infinite tree and we glue them according to boundary length. Then, we show, using Kingman's subadditive ergodic theorem, that there exists a deterministic constant so that a.s. $\lim n^{-1}d(0,\tau_n)= c$. Here, $\tau_n$ is the point in the right infinite branch at distance $n$ from $0$. Afterward, we show that $c$ is equal to $0$ by studying how this distance behaves on an event with positive probability. We conclude that the diameter is $0$ by using  the rerooting invariance of our objects and that $d(0,\tau_n)n^{-1}$ is equal in law to $d(0,\tau_1)$. 

As a consequence, we see that when one glues $\HHH$ with an infinite CRT the diameter of the image of CRT is 0, as the distance in this case are smaller than in the former one.

To obtain the results in the finite volume case, we use the ideas of \cite{BCFS} to see that when one only explores a part of the infinite CRT and of the boundary of the Brownian half-plane, their laws are absolutely continuous with respect to those of the CRT and the Brownian disk respectively. This shows that the diameter of the explored parts of the trees are also zero.

The upper bound of Theorem \ref{t.diam} is a direct consequence of Theorem \ref{t.triviality-SM}, as discrete distances are smaller than their continuum counterparts. However, the  lower bounds can only be obtained from the discrete. To do this, we work in the infinite volume limit of the tree-decorated map. In this case, one can see that the furthest point in the branch of the infinite tree that belongs to the ball of centre $0$ and radius $n$ is stochastically dominated by the sum of $n$ i.i.d. positive random variables with tail decreasing as $x^{-1}$. This implies that this furthest point has distance smaller than $n\log^\alpha(n)$. Again, the finite volume case follows by absolute continuity.

The paper is organised as follows, we start with the preliminaries, where we introduce all the necessary objects and results for what follows. Then, in Section \ref{s.ic} we work on the case where the map is continuous and the volume is infinite to see that the diameter of the tree is $0$. In Section \ref{s.id}, we work in the case where the map is discrete and the volume is infinite, to obtain lower bounds on the distances in the infinite spine. In Section \ref{s.f_volume_d_m} and \ref{s.fc}, we work in the case where the volume of the map is finite and the map is discrete and continuous respectively.

\subsection*{Acknowledgements}
We thank Armand Riera for fruitful discussions. The research of A.S is supported by Grant ANID AFB170001, FONDECYT iniciación de investigación N° 11200085 and ERC 101043450 Vortex. Part of this work was done while L.F was working at University Paris-Saclay, he acknowledges support from ERC 740943 GeoBrown. 

\section{Preliminaries}\label{s.prel}
\subsection{Planar maps}
In this section, we present the elementary concepts that appear in this work. For an introduction to planar maps, we recommend \cite{GJ04, FS09,BM11}.

A \textsc{rooted planar map}, or \textsc{map} for short, is a finite connected graph embedded in the sphere that has a marked oriented edge. We consider that two embeddings as the same map if there is an  homeomorphism between them that preserves the orientation (i.e. respecting a cyclic order around every vertex). We call \textsc{root edge} the marked oriented edge and \textsc{root vertex} its starting point. We denote $v_0$ the root vertex.

A map $\mm_1$ is said to be a submap of $\mm_2$ (with the notation $\mm_1\subset_M \mm_2$) if $\mm_1$ can be obtained from $\mm_2$ by suppressing edges and vertices. This definition implies that $\mm_1$ respects the cyclic order of $\mm_2$ in the vertices and edges remaining. A decorated map is a map with a special submap.

The \textsc{faces} of a map are the connected components of the complement of the edges in the embedding. The \textsc{degree} of a face is the number of oriented edges for which the face lies at its left. In this work, we only work with \textsc{quadrangulations} where all faces (except maybe on one) have degree equal to $4$. 

The face to the left of the root edge is called the \textsc{root face}.  In what follows, \textsc{maps with a boundary} are maps, $\mm^b$, where the root face plays a special role: it has arbitrary degree $2m$. The set of oriented edges that have the root face to its left are called the \textsc{boundary}. The number of oriented edges in the boundary will be called its size. The boundary will be seen  as $\gb:\S^1\mapsto \mm^{b}$ a cyclic path of vertices mapping each of the $2m$ roots of the unity to each vertex appearing in counter-clockwise sense starting on the root vertex. All faces different from the root face are called \textsc{internal faces}.

When the boundary of the map is \textsc{simple}, i.e., the boundary is not vertex-intersecting, the curve $\gb$ is a bijection. We call the label of a vertex $v$ of the boundary the appearance number while going counter-clockwise on the boundary starting by the root vertex. We also label the boundary edges as the label of the vertex where they start from. 

A \textsc{rooted plane tree} of size $m$, or \textsc{tree} for short, is a planar map with only one face and $m$ edges. We will encode plane trees using walks. In the literature, one can find several of these codings, see for example \cite[Section 1]{LG05}. Here we are interested in the contour function. This is a bijection that associates to each rooted plane tree with $m$ edges a Dyck path $C$ indexed by $\cro{0,2m}$. For a more detailed description we refer to \cite[Section 1.1]{LG05} and \cite[Section 2]{Ber07}.

A tree $\tt$ has an intrinsic way of visiting all of its oriented-edges. This visit can be represented by a cyclic path $\gtt:\S^1\to \rtt$ that represents a walker that starts from the root vertex and turns around the tree (see Figure \ref{f.conttree}) associating with each of the $2m$ roots of unity the vertices that he visits. The walker follows the direction of the root edge touching the tree with his left hand\footnote{Note that, in the literature, the walker usually walks following its right hand. In this work, the left hand convention simplify some statements.} as long as it walks. The walker, then, continues until he returns to the root edge. Note that this walk visits every oriented edge only once. Now, we define the contour function of $\tt$ as $C^\tt : \cro{0,2m} \to \N$ the function for which $C^\tt(n)$ is the distance to the root vertex (height) of the vertex visited at time $n$ by the walker (time 0 for the root vertex).  In this case, the inverse is given by the pseudo-distance in $\cro{i,j}$
\begin{align}\label{e.distance_C}
	\dd_C(i,j) = C(i)+C(j)-2\min_{\ell \in \cro{i,j}} C(\ell) .
\end{align}

\begin{figure}[!h]
	\centering
	\includegraphics[scale=1]{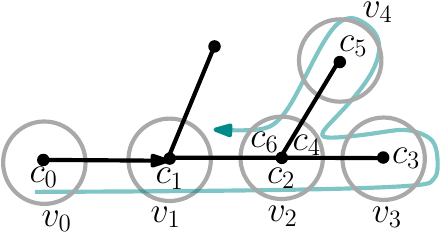}
	\caption{{\small Tree with part of the contour in green, the corners are numbered as $c_i$ and are shown in gray. All the corners belonging to the same circle belong to the same equivalence class associated with a vertex, for example $v_2 = [c_2]_c= [c_4]_c= [c_6]_c$. }}
	\label{f.conttree}
\end{figure} 

Finally, we define a \textsc{tree-decorated map} as a pair $(\mm, \ttm)$ where $\mm$ is a map (without a boundary) and $\ttm\subset_{M}\mm$ is a tree of size $m$. In this work we will study tree-decorated quadrangulations $(\rqq, \ttm)$, which are tree-decorated maps where $\rqq$ is a quadrangulation. Furthermore, for this work we require that the root edge of $\ttm$ coincides with the edge-root of $\rqq$. This allows us to represent $(\rqq,\ttm)$ as a pair $(\rqq,\gm)$, where $\gm:\S^1\to \ttm\subseteq_{M} \rqq$ is the cyclic path that starts from the root and represents the walker following the contour of the tree $\ttm$ (again it associates to the $2m$ roots of the unity the vertices appearing on the path starting from the root vertex and following the root edge). 

In words, when the objects are decorated in our setting they define curves implicitly as explained before, i.e. $\gamma:E \to M$, with $E$ equal to $\S^1$ or $\R$ plus the condition that the curve extends continuously to $[-\infty,\infty]$. 
\begin{rem}\label{cont_ext}
	All the curves previously introduced are curves from discrete spaces, but we can transform them into curves of continuous spaces as follows. We linearly interpolate the discrete graph metric spaces along the edges by identifying each edge with a copy of the interval $[0,1]$. We extend the graph metric in such a way that a path $\gamma$ in $\cro{a,b}$ is linearly interpolated to $[a-1,b]$ by traversing each edge $\gamma(i)$ in the path at unit speed during $[i-1,i]$.
\end{rem}

Rigorously for a metric space $(M,d)$, we consider $C_0(E,M)$ the space of continuous curves $\gamma:E\to M$. Notice that each curve defined on an interval $[a,b]$ can be seen as an element of $C_0(\R,M)$ by considering $\gamma(t)=\gamma(a)$ for $t<a$ and $\gamma(t)=\gamma(b)$ for $t>b$. To compare two curves $\gamma_1,\gamma_2 \in C_0(E,M)$ we use the uniform metric $\dd_{U}$ defined as
\[
\dd_{U}(\gamma_1,\gamma_2) = \sup_{t \in E}d(\gamma_1(t),\gamma_2(t))
\]

In the case of trees, we decorate the metric space by $\gtt$. For the case of maps with a boundary, we decorate them by $\gb$ the curve that starts from the root and follows the boundary at constant speed. In the case of tree-decorated maps, we decorate-them with the ``Peano''-type curve $\gm$ associated to the contour exploration of the tree in the map. In all of these cases $E=\S^1$. 

In this work, we also need to work in cases where $E=\R$. In these cases, we need to work with the truncation of the curve. To do define this truncation first take $r>0$, and define 
\begin{align}
	\underline{\tau}_r^\gamma = (-r)\vee \sup\{t<0: d(\gamma(0),\gamma(t))=r\} \; \text{ and }\; \overline{\tau}_r^\gamma = (-r)\wedge \inf\{t<0: d(\gamma(0),\gamma(t))=r\}
\end{align}
where $d$ is the metric associated to the metric space where $\gamma$ is embedded in. Then, the $r$-truncated of $\gamma$ is the curve 
\[
\gamma_r(t) = \begin{cases}
	\gamma(\underline{\tau}_r^\gamma) &\text{ if } t<\underline{\tau}_{r}^\gamma\\
	\gamma(t) &\text{ if } t\in[\underline{\tau}_{r}^\gamma,\overline{\tau}_{r}^\gamma]\\
	\gamma(\overline{\tau}_r^\gamma) &\text{ if } t>\overline{\tau}_{r}^\gamma.
\end{cases}
\] 

We denote by $\Ball{M,u}{r}$ the ball of radius $r$ centred at $u$ in $M$ and we set $\Ball{M}{r}$ as the ball centred at the root vertex. We also define the $r$-truncation of $(M,\gamma)$ as the curve-decorated metric space $\Tr{M,\gamma}{r} := (\Ball{M}{r}, \gamma_r)$, where the metric in $\Ball{M}{r}$ is the infimum with respect to the metric of $M$ over paths completely contained on $\Ball{M}{r}$.

Finally, let us make a remark regarding the notation. At the discrete level, we will only work with quadrangulations and because of this the notation of some of the elements associated to them have a superscript $\qq$. Since in the scaling limit settings these maps converge to metric spaces $M$ that have no ``quadrangular'' nature, we change the superscript $\qq$ by $M$ in the notation of the continuum objects. Additionally, in our notation to disambiguate some cases we use lower case letters for discrete object and upper case letters for continuous objects.

\subsection{The tree-decorated map and its bijection}\label{ss.tdm}
In this subsection, we will describe the so-called gluing procedure introduced in \cite{FS19}. We direct the reader to Section 3 of that paper to see the proofs and details. Here we give a short summary of the bijection.

Take a couple $(\qq^b,\tt)$ of a quadrangulation with a simple boundary of size $2k$ and $f$ faces and a tree of size $k$. We want to construct $(\qq,\ttm)$ a tree-decorated quadrangulation with $f$ faces and a tree of size $k$. 

Recall that the vertices of the external face of $\qq^b$ are indexed from $0$ to $2k-1$, and call $C$ the contour function of $\tt$. The function $C$ induces an equivalent relation on vertices via the zeros of equation \eqref{e.distance_C}, and define $V'$ as the set of equivalence classes.

Let us now construct $\qq$. The vertex set is made by the union of $V'$ with all vertices of $\qq^b$ that do not belong to the exterior face. The edge set of $\qq$ is constructed from those of $\qq^b$ in the following way. Let $(x,y)$ be an oriented edge of $\qq^b$, then the edge $(G(x),G(y))$ is in $\qq^b$, where
\begin{equation}
	G(x):= \begin{cases}
		[l]&\text{ if } x\in V',\\
		x  &\text{else,} \end{cases}
\end{equation}
where $l$ is the label of $x$ in the boundary, and $[l]$ is the equivalence class of $l$ under the equivalence relation defined by $C$.

\begin{figure}[!ht]
	\begin{subfigure}{0.4\textwidth}
		\centering
		\scalebox{1}[1]{\includegraphics[scale=0.4]{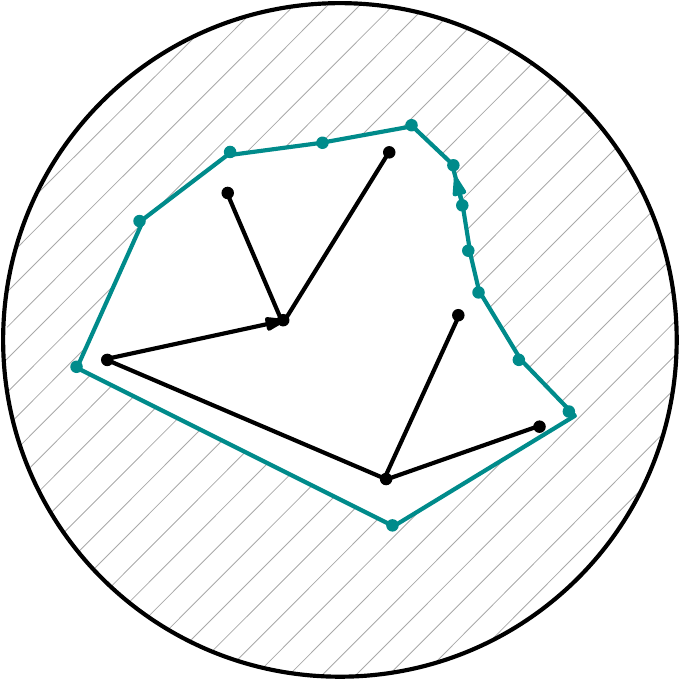}}		
	\end{subfigure}\hspace{-0.05\textwidth} $\substack{\text{Gluing}\\\longrightarrow}$ \hspace{-0.05\textwidth}
	\begin{subfigure}{0.4\textwidth}
		\centering
		\scalebox{1}[1]{\includegraphics[scale=0.4]{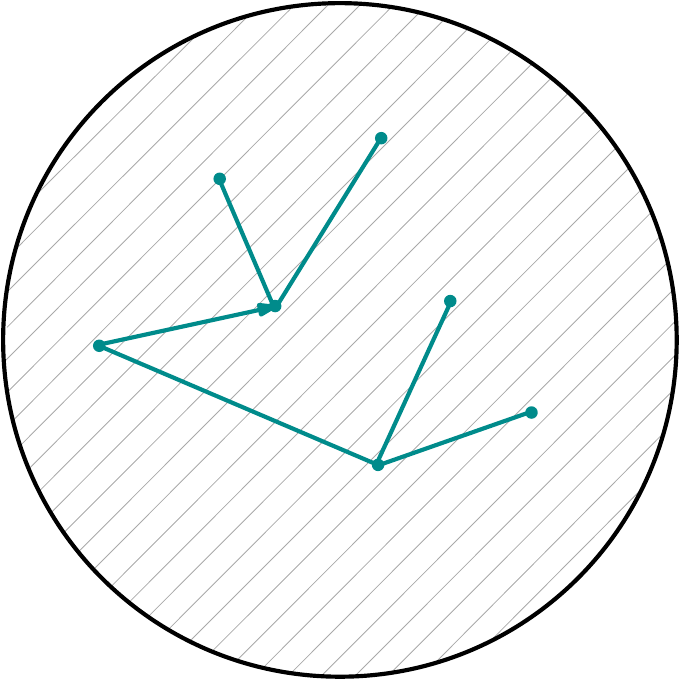}}
		
	\end{subfigure}
	\caption{A simple sketch of the gluing procedure.}
	\label{dectoboundandtree}
\end{figure}

\begin{rem}
	Note that because the boundary of $\qq^b$ is simple, the vertices $V'$ have the same tree-structure in $\qq$ as in $\tt$.
\end{rem}

\subsection{Topologies and convergences}
Now, we describe the topologies that will be used throughout this paper. In this work, we will deal with two types of limits: local limits and scaling limits. For the local limits we use the Benjamini-Schramm topology, and for the scaling limits we use the Gromov-Hausdorff topology in the compact case and the local Gromov-Hausdorff in the non-compact case. When object are decorated, we will consider them decorated by curves, and therefore we will use a strengthen version of these topologies. 

\subsubsection{The Benjamini-Schramm Uniform topology}
The Benjamini-Schramm uniform topology allows us to describe the local limit of a sequence of curved decorated maps $(\mm_n,\gamma_n)$. We say that $(\mm_n,\gamma_n)$ converges to $(\mm,\gamma)$ if for any $R>0$, we have that
\begin{align*}
	\Tr{\mm,\gamma}{r} = \Tr{\mm_n,\gamma_n}{r}
\end{align*}
for all $n$ big enough.

This topology is metrizable, nevertheless we will not need the exact description of its metric.

The (undecorated) Benjamini-Schramm topology arises as the decorated Benjamini-Schramm topology when the curve is constant equal to the root.

\subsubsection{The Gromov-Hausdorff Uniform topology}\label{sec:GH}
This topology will be used to describe the limit of curve-decorated sequence of maps $(\mm_n,\gamma_n)$ in the compact case. We define the following distance between two decorated metric spaces $(\mm_1,\gamma_1)$ and $(\mm_2,\gamma_2)$,
\begin{align*}
	\dd_{GHU}((\mm_1,\gamma_1),(\mm_2,\gamma_2))= \inf_{\phi_1,\phi_2}\{ \dd_{Haus}(\phi_1(\mm_1),\phi_2(\mm_2)) +\dd_{Unif}(\phi_1(\gamma_1),\phi_2(\gamma_2)), \}
\end{align*}
where the infimum is taken over all isometries $\phi_1$ and $\phi_2$ taking $\mm_1$ and $\mm_2$ (respectively) to a common metric space. Here, the Haussdorf distance $\dd_{Haus}$ is a distance between closed set of a given metric space $E$ and is defined as
\begin{align*}
	\dd_{Haus}(C,C')= \inf\{\epsilon>0: C\subseteq C'+B_\epsilon(E), C'\subseteq C+ B_\epsilon(E) \}.
\end{align*}
The (undecorated) Gromov-Hausdorff topology appears in this context as the decorated Gromov-Hausdorff topology when the curve is constant equal to the root.

\begin{rem}[\textbf{GHU Compactness}]\label{compGHU} From Lemma 2.6 in \cite{GM3}, if a set $S$ satisfies: $S$ compact in the Gromov-Hausdorff sense and $S$ equicontinuous, then $S$ is pre-compact in the Gromov-Hausdorff Uniform topology. Here equicontinuous applies for the curves; and of course it depends on $E$ and its topology.
\end{rem}

\subsubsection{The Local Gromov-Hausdorff uniform topology}\label{sec:LGHU^}
This topology will be used to describe the limit of curve-decorated sequence of maps $(\mm_n,\gamma_n)$ in the non-compact case. We define the following distance between two decorated metric spaces $(\mm_1,\gamma_1)$ and $(\mm_2,\gamma_2)$,
\begin{align*}
	\dd_{LGHU}((\mm_1,\gamma_1),(\mm_2,\gamma_2))= \int_0^\infty e^{-r}(1\wedge d_{GHU}(\Tr{\mm_1,\gamma_1}{r},\Tr{\mm_2,\gamma_2}{r}))dr
\end{align*}

The (undecorated) local Gromov-Hausdorff topology appears in this context as the decorated Gromov-Hausdorff topology when the curve is constant equal to the root.

\subsection{Infinite trees} 

We discuss now about the limit of plane trees in different topologies. The first limit that one can study is the local limit. One object that arise naturally as this type of limit is the infinite critical geometric tree.

\begin{defn}\label{p.description_tree}
	The infinite critical geometric tree $\rtt_\infty$ is defined by the following construction
	\begin{enumerate}
		\item Take a copy of the graph given by the natural numbers $\N$, this is called the spine. We denote the elements of the spine $\tau_{-n}$.
		\item Associate to each vertex $v$ of the spine two critical geometric GW trees and hang one to the positive and one to the negative half-plane with $v$ as the root of both trees.
		\item Root the tree in the edge $\overrightarrow{\tau_{0}\tau_{-1}}$.
	\end{enumerate}
\end{defn}

The way to obtain this object as limit is presented in the following theorem.
\begin{thm}[Lemma 1.14 \cite{K86} and Proposition 5.28 \cite{LP17}]\label{t.ICT} Let $\rtt_m$ be a uniform tree with $m$ edges. One has that $\rtt_m\rightarrow \rtt_\infty$ in law for the Benjamini-Schramm topology as $m\rightarrow \infty$. The resulting random object $\rtt_\infty$ is an a.s. one-ended tree called the \textit{infinite critical geometric tree}. 
\end{thm}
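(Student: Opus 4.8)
The statement to prove is Theorem~\ref{t.ICT}, which identifies the Benjamini--Schramm limit of uniform plane trees $\rtt_m$ on $m$ edges as the infinite critical geometric tree $\rtt_\infty$.

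The plan is to use the standard bijection between plane trees and \emph{critical geometric Galton--Watson trees conditioned on total progeny}. A uniform plane tree on $m$ edges has the same law as a Galton--Watson tree with offspring distribution $\mathrm{Geom}(1/2)$ (i.e.\ $\P(k \text{ children}) = 2^{-(k+1)}$) conditioned to have exactly $m$ edges; this holds because the Geom$(1/2)$-GW measure assigns to every plane tree $\tt$ with $m$ edges the same weight $\prod_{v} 2^{-(\deg^+(v)+1)} = 2^{-(2m+1)}$, so conditioning on size gives the uniform distribution. The offspring law has mean $1$, so this is a critical GW tree. Thus $\rtt_m$ is a critical GW tree conditioned on total progeny $m+1$.

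Next I would invoke Kesten's theorem on the local limit of critical (finite-variance) GW trees conditioned to be large: as $m \to \infty$, a critical GW tree conditioned on total progeny $\to \infty$ converges in the Benjamini--Schramm (local) topology to Kesten's tree. Kesten's tree is constructed exactly as in Definition~\ref{p.description_tree}: it has a unique infinite spine, each spine vertex has a size-biased number of children, one of which continues the spine, and all bushes hanging off the spine are independent unconditioned copies of the original GW tree. For the $\mathrm{Geom}(1/2)$ offspring law, the size-biasing produces a $\mathrm{Geom}(1/2)$-distributed number of \emph{additional} children besides the spine child (a small computation: the size-biased law of Geom$(1/2)$ minus one is again Geom$(1/2)$), so the total law along the spine matches the description. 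One then reroots at the edge $\overrightarrow{\tau_0 \tau_{-1}}$ as in the statement, which only changes the finite neighbourhood of the origin in a way that is absorbed by the local topology, and checks that the resulting object is a.s.\ one-ended — a direct consequence of the fact that Kesten's tree has a single infinite spine and all bushes are a.s.\ finite (each being an unconditioned critical GW tree, which dies out a.s.).

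The main obstacle, and really the only nontrivial input, is the local-limit statement for conditioned GW trees (Kesten's theorem), together with the bookkeeping that matches the size-biased spine offspring of $\mathrm{Geom}(1/2)$ to the ``two independent critical geometric GW trees per spine vertex'' description in Definition~\ref{p.description_tree}. But since this result is quoted directly from \cite{K86} and \cite{LP17}, the proof here amounts to: (i) the bijective identification of $\rtt_m$ with a size-conditioned critical $\mathrm{Geom}(1/2)$-GW tree; (ii) citing the local convergence of the latter to Kesten's tree; (iii) verifying that Kesten's tree for this offspring law, re-rooted appropriately, is precisely $\rtt_\infty$ of Definition~\ref{p.description_tree}; and (iv) noting one-endedness. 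No delicate estimates are needed beyond what the cited references supply.
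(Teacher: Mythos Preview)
The paper does not prove this theorem at all: it is stated as a known result with citations to \cite{K86} and \cite{LP17}, so there is no ``paper's own proof'' to compare against. Your sketch is exactly the standard argument behind those references --- identify $\rtt_m$ with a $\mathrm{Geom}(1/2)$ Galton--Watson tree conditioned on total size, apply Kesten's local limit theorem, and match Kesten's tree to Definition~\ref{p.description_tree} --- and it is correct in outline.

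One small slip in your bookkeeping: the size-biased $\mathrm{Geom}(1/2)$ law minus one is \emph{not} again $\mathrm{Geom}(1/2)$; it is $\mathrm{NegBin}(2,1/2)$, i.e.\ $\P(Z=j)=(j+1)2^{-(j+2)}$. This is exactly what is needed, since in Definition~\ref{p.description_tree} each spine vertex carries \emph{two} independent $\mathrm{Geom}(1/2)$ bushes (one on each side of the spine), so the number of non-spine children is the sum of two independent $\mathrm{Geom}(1/2)$ variables, which is $\mathrm{NegBin}(2,1/2)$. You should also check that the position of the spine child among its siblings is uniform in Kesten's construction, and that conditionally on $X+Y=k-1$ with $X,Y$ i.i.d.\ $\mathrm{Geom}(1/2)$ the split $(X,Y)$ is uniform on $\{0,\dots,k-1\}$ --- both hold, so the planar embedding matches. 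With that correction your argument is complete.
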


\begin{defn}\label{d.subtreee}
	For an infinite critical geometric tree $\tt_\infty$ and $m\in \R$ we define $\tt_\infty(m)$ the finite tree created by all the vertices of the spine that are at distance smaller than or equal to $m$ to the root and all the trees attached to them. 
\end{defn}

It is useful to us to construct tress using the so-called contour function.
\begin{defn}[Real trees] Let $f: I\subseteq \R \to \R$ be a continuous function. For $s,t\in I$ we define the following pseudo-distance
	\[
	\dd_f(t_1,t_2)= f(t_1)+f(t_2)-2\inf_{s \in \cro{t_1,t_2}}f(s).
	\]
	We define the tree coded by $f$ as the metric space $(\mathcal{T}_f,\dd_f)$ consisting of $I/\{\dd_f=0\}$ and we associate as the root the equivalent class of $0$ under $\dd_f$. Notice that $\dd_f$ in this space induces a metric (that we also call $\dd_f$).
\end{defn}
We can use this to write the infinite critical geometric tree by means of a simple random walk\footnote{In this paper, all discrete functions will be interpolated linearly so they are continuous.}.
\begin{prop}\label{prop:rwdes}
	Let $X^+$ and $X^-$ be two independent simple random walks taking values $0$ at $0$. Define
	\begin{align*}
		X(t)=\begin{cases}
			X^{+}(t) & \text{ if } t\geq 0,\\
			X^{-}(-t) & \text{ if } t< 0.
		\end{cases}
	\end{align*}
	Then, $(\TT_X,\dd_X)$ is the infinite critical geometric tree\footnote{Here we represent the infinite critical geometric tree as a metric space where the edges are replaced by copies of $[0,1]$.}.
\end{prop}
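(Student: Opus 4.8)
The plan is to cut the two-sided walk $X$ along its running infimum on each half-line, to recognise each piece, through the contour encoding of plane trees by Dyck paths, as an independent critical geometric Galton--Watson tree grafted onto a common spine, and then to match the outcome with Definition~\ref{p.description_tree}. (Recall that discrete walks are interpolated linearly, so $\dd_X$ gives length $1$ to every lattice edge, exactly as for $\rtt_\infty$.) The building block is the classical fact (see e.g. \cite[Section 1]{LG05}): if $(S_n)_{n\ge0}$ is a simple random walk with $S_0=0$ and $\theta=\inf\{n\ge0:S_n=-1\}$, then $\theta$ is a.s. finite and odd, $(S_0,\dots,S_{\theta-1})$ is a Dyck path, and the plane tree it encodes via the Harris correspondence is a Galton--Watson tree with offspring law $\mu(k)=2^{-(k+1)}$; as $\mu$ has mean $1$, this is the critical geometric Galton--Watson tree of Definition~\ref{p.description_tree}, and since the contour encoding is a bijection this identity holds jointly with the planar structure.

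Apply this to $X$. As the simple random walk is recurrent, $\inf_{[0,t]}X\to-\infty$, hence $\sigma^+_n:=\inf\{t\ge0:X(t)=-n\}$ is a.s. finite for all $n\ge0$ (with $\sigma^+_0=0$), and likewise the hitting times $\sigma^-_n$ of $-n$ by $X$ on $(-\infty,0]$. By the strong Markov property the shifted excursions $E^+_n:=(n+X(\sigma^+_n+\cdot))$ on $[0,\sigma^+_{n+1}-\sigma^+_n]$, $n\ge0$, are i.i.d., each a Dyck path followed by a single down-step as in the building block; the analogous statement holds on the negative half-line, independently. Moreover, if $t\ge0$ and $t'\le0$ satisfy $X(t)=\inf_{[0,t]}X=-n=\inf_{[t',0]}X=X(t')$, then $\inf_{[t',t]}X=-n$, so $\dd_X(t,t')=0$ by \eqref{e.distance_C}; thus all running-infimum records of $X$ at level $-n$ (on either side) project to a single point $\tau_{-n}\in\TT_X$, with $\tau_0=[0]$ the root, and an elementary computation with \eqref{e.distance_C} gives $\dd_X(\tau_{-n},\tau_{-m})=|n-m|$, so $\{\tau_{-n}:n\ge0\}$ is an isometric copy of $\N$.

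Finally, I identify the bushes. For $t$ in the interval $[\sigma^+_n,\sigma^+_{n+1}-1]$, every infimum of $X$ over a sub-interval of $[\sigma^+_n,\sigma^+_{n+1}]$ equals the corresponding infimum of $E^+_n$ shifted by $-n$; hence $\dd_X$ restricted to such $t$ is exactly the pseudo-distance coded by the Dyck part of $E^+_n$, so $\{[t]:t\in[\sigma^+_n,\sigma^+_{n+1}-1]\}$ with its induced metric is the plane tree coded by that Dyck path --- a critical geometric Galton--Watson tree $T^+_n$ --- attached to the spine at its root $[\sigma^+_n]=\tau_{-n}$ and lying on the positive-time side. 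A similar check shows distinct excursions give bushes meeting only at the relevant spine vertex, and the negative excursions give trees $T^-_n$ rooted at $\tau_{-n}$ on the negative-time side. Since the GW bushes are a.s. finite, $\{\tau_{-n}\}$ is the unique infinite ray. Assembling all this, $(\TT_X,\dd_X)$ with its canonical plane structure is a copy of $\N$ bearing at each vertex $\tau_{-n}$ two independent critical geometric Galton--Watson trees $T^+_n,T^-_n$ (one in each half-plane), rooted at $\overrightarrow{\tau_0\tau_{-1}}$, with everything independent, and decorated by the contour curve read from $X$ --- precisely the object of Definition~\ref{p.description_tree}.

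The only genuinely non-routine ingredient is the building block in the first paragraph --- that the contour function of the critical \emph{geometric} Galton--Watson tree is a simple random walk stopped when it first reaches $-1$, with offspring law exactly matching Definition~\ref{p.description_tree}. Everything after that is careful bookkeeping with the pseudo-distance \eqref{e.distance_C} and the excursion decomposition of a recurrent walk; one could alternatively deduce the statement from Theorem~\ref{t.ICT} by a re-rooting argument, but the direct decomposition is the most self-contained route.
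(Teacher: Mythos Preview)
Your proof is correct and follows essentially the same approach as the paper: decompose the two-sided walk $X$ along its successive records (hitting times of new minima) on each half-line, invoke the classical fact that a simple random walk excursion stopped when it first reaches $-1$ is the contour of a critical geometric Galton--Watson tree, and observe that the record points on the two sides are identified under $\dd_X$ to form the spine. Your write-up is more detailed than the paper's (which is a short paragraph immediately following the statement), in particular you make explicit the pseudo-distance computations showing that records at the same level on the two half-lines collapse to a single spine vertex and that the spine is isometric to $\N$; but the underlying idea is identical.
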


First of all notice that the pseudo-distance gives an infinite tree with a unique infinite branch. To show that the distribution of the simple random walk description coincides with the one given in \Cref{p.description_tree} we use the decomposition of the simple random walk by records. Starting from $0$ consider $X_t$ up to the first time $\tau_{-1}$ it hits $-1$. It is well known\footnote{Consult \cite[Prop. 1.5.]{LG05} together with the bijection between the Lukasiewicz function and the contour function of a tree.} that $\P(\tau_{-1}= 2n+1)$ is equal to the probability of a critical geometric GW tree having size $n$. We define for $i> 2$ the records as the fists time $\tau_{-i}$ that $X_{\tau_{-i-1}+t}$ hits $-i$. We identify that the pseudo-distance associate to each record of the simple random walk a tree in the positive half-plane with the same distribution as the critical geometric GW tree. It is clear that one can do the same for the negative part giving trees in the negative half-plane distributed as the critical geometric GW tree.

Now, we introduce the bi-infinite critical geometric tree, which can roughly be seen as unzipping the infinite critical geometric tree along the spine.
\begin{defn}[]\label{p.description_tree_2}
	The bi-infinite critical geometric tree $\rtt_{-\infty}^\infty$ is constructed as follows 
	\begin{enumerate}
		\item Take a copy of the graph given by the integer numbers $\Z$, we called it the bi-infinite spine. We denote the element associated to $n \in \Z$ in the bi-infinite spine $\tau_{-n}$. 
		\item Associate to each vertex $v$ of the spine one critical geometric GW tree and hang it in the positive half-plane with respect to the line $\Z$.
		\item Root the tree in the edge $\overrightarrow{\tau_{0}\tau_{-1}}$.
	\end{enumerate}
\end{defn}	

\begin{defn}\label{d.subtreeebi}
	For a bi-infinite critical tree $\tt_{-\infty}^\infty$ and $m\in \R$ we define $\tt_{-\infty}^{\infty} (m)$ the finite tree created by all the vertices of the (negative and positive) spines that are at distance smaller than or equal to $m$ to the root and all the trees attached to them. 
\end{defn}
Again we can construct the bi-infinite critical geometric tree by means of a simple random walk as follows.	
\begin{prop}
	Let $Y^+$ and $Y^-$ be two independent simple random walks started at height 0 with $Y^+$ conditioned to be positive. Define
	\begin{align*}
		Y(t)=\begin{cases}
			Y^{+}(t) & \text{ if } t\geq 0,\\
			Y^{-}(-t) & \text{ if } t< 0.
		\end{cases}
	\end{align*}
	Then, $(\TT_Y,\dd_Y)$ is the bi-infinite critical geometric tree.
\end{prop}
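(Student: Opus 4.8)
The plan is to follow the proof of Proposition~\ref{prop:rwdes} almost word for word, the only novelties coming from the conditioning of $Y^+$. First I would fix the interpretation: ``conditioned to be positive'' means $Y^+(t)>0$ for every $t>0$, so that $Y^+(1)=1$, $Y^+\ge1$ afterwards, and the whole segment $\cro{0,1}$ is swept by the spine of the tree coded by $Y^+$. On the negative side set $\sigma^-_j=\inf\{s\ge0:Y^-(s)=-j\}$; on the positive side let $J_t=\inf_{u\ge t}Y^+(u)$ be the future infimum of $Y^+$ and $\sigma^+_k=\inf\{t\ge0:J_t=k\}$, so that $\sigma^+_0=0$ and $\sigma^+_1=1$. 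Since $Y^+>0$ on $(0,\infty)$ one has $\inf_{u\in[0,t]}Y^+(u)=0$ for all $t\ge0$, hence $\dd_{Y^+}(0,t)=Y^+(t)$ and $Y^+(\sigma^+_k)=k$, and $J$ is nondecreasing with $J_t\to+\infty$. This is precisely why, on the positive side, the decomposition of $Y^+$ that encodes the spine is the one along the records of its \emph{future} infimum, rather than of its running infimum as in Proposition~\ref{prop:rwdes}.

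Next I would carry out the two one-sided decompositions. By the strong Markov property the blocks $Y^-|_{\cro{\sigma^-_j,\sigma^-_{j+1}}}$, $j\ge0$, are i.i.d.; by the last-exit (future-infimum) decomposition of a simple random walk conditioned to stay positive, so are the blocks $Y^+|_{\cro{\sigma^+_k,\sigma^+_{k+1}}}$, $k\ge0$; and the two families are independent of each other. Exactly as in Proposition~\ref{prop:rwdes} --- that is, via \cite[Prop.~1.5]{LG05} together with the Lukasiewicz/contour correspondence --- after shifting $Y^-|_{\cro{\sigma^-_j,\sigma^-_{j+1}}}$ up by $j$ (resp.\ $Y^+|_{\cro{\sigma^+_k,\sigma^+_{k+1}}}$ down by $k$) every block is a Dyck path coding a critical geometric GW tree followed by a single spine step, the only exception being the first positive block $Y^+|_{\cro{0,1}}$, which is a bare spine step carrying only the trivial one-vertex tree. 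Passing to $(\TT_{Y^-},\dd_{Y^-})$ and $(\TT_{Y^+},\dd_{Y^+})$, this exhibits on each side a half-line of spine vertices $\tau_0,\tau_{-1},\tau_{-2},\dots$, with $\tau_{-k}$ at distance $k$ from $\tau_0$, carrying one critical geometric GW tree at every vertex on the $Y^-$ side and at every vertex except $\tau_0$ on the $Y^+$ side, all these trees being independent.

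Finally I would glue at the root and check that the two half-spines stay disjoint. Parametrising points of $\TT_Y$ by $Y$, for $s,t>0$ one has $\dd_Y(-s,t)=Y^-(s)+Y^+(t)-2\min\big(\inf_{u\in[0,s]}Y^-(u),\,\inf_{u\in[0,t]}Y^+(u)\big)=Y^-(s)+Y^+(t)-2\inf_{u\in[0,s]}Y^-(u)$ since $\inf_{u\in[0,t]}Y^+(u)=0$; evaluated at the spine times this gives $\dd_Y(-\sigma^-_j,\sigma^+_k)=(-j)+k-2(-j)=j+k$, which vanishes only for $j=k=0$, while $\dd_Y(0,-\sigma^-_j)=j$ and $\dd_Y(0,\sigma^+_k)=k$. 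Hence the two half-spines meet only at $\tau_0$ and together form a bi-infinite geodesic line $\dots,\tau_1,\tau_0,\tau_{-1},\dots$ with $\dd_Y(\tau_i,\tau_{-j})=i+j$ for $i,j\ge0$; every vertex of this line carries exactly one critical geometric GW tree --- the one at $\tau_0$ coming from the $Y^-$ side --- and all of them are independent. Because no block straddles $0$, the gluing leaves every subtree unchanged, and the left-hand contour convention places all the subtrees on the same side of the spine while starting the exploration along $\overrightarrow{\tau_0\tau_{-1}}$; comparing with Definition~\ref{p.description_tree_2} this is exactly the bi-infinite critical geometric tree. The step I expect to be the main obstacle is the excursion decomposition of $Y^+$: one must justify that conditioning a simple random walk to stay positive does not change the law of its excursions above the future infimum (so that they still code critical geometric GW trees) and that the first such excursion degenerates to a bare spine edge --- which is exactly the mechanism that removes one subtree at the root and turns the one-ended spine of Proposition~\ref{prop:rwdes} into a bi-infinite one. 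Everything else is a word-for-word repetition of that proof.
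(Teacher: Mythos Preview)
Your proposal is correct and follows essentially the same approach as the paper. The paper's proof is a one-sentence sketch --- ``same idea as in Proposition~\ref{prop:rwdes} with the difference that records\ldots\ have to be taken as the last time the random walk escapes each positive level'' --- and your future-infimum times $\sigma^+_k$ are exactly these last-exit records (shifted by one step), so the decomposition you spell out is precisely what the paper has in mind. Your observation that the first positive block $Y^+|_{\cro{0,1}}$ is a bare spine step (so that the GW tree at $\tau_0$ comes only from the $Y^-$ side) is the correct way to reconcile the one-tree-per-spine-vertex structure of Definition~\ref{p.description_tree_2} with the two-sided coding, and your distance computation showing the two half-spines meet only at $\tau_0$ is a nice sanity check that the paper omits.

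One small remark: you may have noticed an apparent inconsistency between the statement here (where $Y^+$ is conditioned) and Definition~\ref{def:BICRT} (where $Z^-$ is conditioned), as well as the paper's reference to ``the negative part''. This is almost certainly a labelling slip in the paper; since the bi-infinite tree is invariant in law under the reflection $\tau_n\leftrightarrow\tau_{-n}$, it makes no difference to the argument, and your version is internally consistent. The point you flag as the main obstacle --- that the excursions of the conditioned walk above its future infimum have the same law as the excursions of the unconditioned walk above its running minimum --- is indeed the only substantive step not already contained in Proposition~\ref{prop:rwdes}; it is classical (via time reversal or Pitman's construction) and the paper takes it for granted as well.
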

This follows from the same idea as in the deduction of \Cref{prop:rwdes} with the difference that records in the negative part has to be taken as the last time the random walk escapes each positive level.

Another way to try to understand how big a uniform tree looks like is through a renormalisation of the distance of the tree, so that its diameter remains of constant order. We do this by dividing the distance of $\rtt_m$ by $\sqrt{m}$ leading to a limiting object which is continuos and has finite volume called the continuum random tree (CRT).
\begin{defn}
	Let $(\e_t : t\in[0,1])$ be a Brownian excursion. The CRT, $\Crt$, is the random tree (metric space) defined by $(\TT_{\e},\dd_{\e})$.
\end{defn}

The image of the Lebesgue measure of the map from $[0,1]$ to $\TT_\e$ gives a natural parametrisation to explore the contour of the real tree at unit speed and formalises the length of the CRT. In the sequel we consider the contour curve $\gtt$ parametrised in such a way. As a consequence if $\TT$ has length 1, then $\sigma\TT$ has length $\sigma$.
\color{black}

The following theorem formalises the renormalisation of the finite volume limit.
\begin{thm}[Theorem 8 \cite{AL91}]\label{t.CRT}
	Let $\rtt_m$ be a uniformly chosen tree with $m$ edges and consider it as a metric space with its natural graph distance. Then $(2m)^{-1/2} \rtt_m$ converges in law to the CRT $\Crt$ for the Gromov-Hausdorff topology.
\end{thm}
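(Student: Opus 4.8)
The plan is to run the argument through the contour-function encoding that the preliminaries have just set up. First I would encode $\rtt_m$ by its contour function $C := C^{\rtt_m}\colon\cro{0,2m}\to\N$, linearly interpolated on $[0,2m]$ as in Remark~\ref{cont_ext}. By the contour bijection, $C$ is a uniformly chosen Dyck path of length $2m$, equivalently the path of a simple random walk with $\pm 1$ steps started at $0$, conditioned to return to $0$ at time $2m$ and to stay nonnegative in between. The point is that $\rtt_m$ with its graph distance is isometric to $(\mathcal{T}_C,\dd_C)$ restricted to (the images of) the integers in $\cro{0,2m}$, by the very formula \eqref{e.distance_C}; and $\mathcal{T}_C$ itself differs from this discrete tree by at most one edge length. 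Moreover, scaling the encoding function by a constant scales $\dd_C$ by the same constant, and an increasing reparametrisation of the domain leaves the coded metric space isometric. Hence $(2m)^{-1/2}\rtt_m$ differs, by a Gromov--Hausdorff error of order $(2m)^{-1/2}$, from the tree coded by the function $t\mapsto (2m)^{-1/2}C(2mt)$, $t\in[0,1]$.

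The second ingredient is the invariance principle for the rescaled contour: $\bigl((2m)^{-1/2}C(2mt)\bigr)_{t\in[0,1]}$ converges in law, in $C([0,1])$ with the uniform norm, to the normalised Brownian excursion $(\e_t)_{t\in[0,1]}$. This is where the genuine probabilistic content lies, since the underlying conditioning (a random-walk bridge of length $2m$, further conditioned to be nonnegative) has vanishing probability. I would obtain it either by quoting the conditioned-random-walk invariance principle directly, or by deriving it from Donsker's theorem via the cycle lemma / Vervaat transform: a uniform Dyck path of length $2m$ is the Vervaat transform (re-rooting at the running minimum) of a simple-random-walk bridge of length $2m$; the rescaled bridge converges to the Brownian bridge, which one handles by a local central limit theorem to control the conditioning on $\{S_{2m}=0\}$; and the Vervaat map is almost surely continuous at the Brownian bridge, so the rescaled excursion converges to $\e$.

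The third ingredient is purely deterministic: the map $f\mapsto(\mathcal{T}_f,\dd_f)$, from the space of nonnegative continuous excursion-type functions on $[0,1]$ with the uniform norm into the space of isometry classes of compact metric spaces with the Gromov--Hausdorff distance, is continuous. Concretely one shows $\dd_{GH}(\mathcal{T}_f,\mathcal{T}_g)\le 2\|f-g\|_\infty$: the correspondence that pairs, for every $s\in[0,1]$, the projection of $s$ in $\mathcal{T}_f$ with the projection of $s$ in $\mathcal{T}_g$ has distortion at most $4\|f-g\|_\infty$, directly from $\dd_f(s,t)=f(s)+f(t)-2\inf_{[s\wedge t,\,s\vee t]}f$ and the reverse triangle inequality for infima.

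Putting the three pieces together concludes: by the continuous mapping theorem applied to $f\mapsto\mathcal{T}_f$ (or, equivalently, by a Skorokhod coupling realising the contour convergence almost surely), the tree coded by $(2m)^{-1/2}C(2m\,\cdot)$ converges in law for the Gromov--Hausdorff topology to $\mathcal{T}_\e=\Crt$, and the $o_{GH}(1)$ discrepancy with $(2m)^{-1/2}\rtt_m$ from the first step is negligible; hence $(2m)^{-1/2}\rtt_m\to\Crt$ in law. I expect the main obstacle to be the second step, the excursion invariance principle, as it is the only part that is not soft encoding bookkeeping or the Lipschitz estimate for $f\mapsto\mathcal{T}_f$; the care needed there is exactly in transferring Donsker-type convergence across the measure-zero conditioning defining the Dyck path.
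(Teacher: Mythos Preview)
Your argument is correct and is the standard modern route to this result (contour encoding, the conditioned invariance principle for Dyck paths toward the Brownian excursion, and the Lipschitz bound $\dd_{GH}(\mathcal{T}_f,\mathcal{T}_g)\le 2\|f-g\|_\infty$). There is nothing to compare against here: the paper does not give its own proof of \Cref{t.CRT}, it merely quotes it as Theorem~8 of \cite{AL91} in the preliminaries, and indeed later (in the proof of \Cref{l.nuTree}) it uses exactly the contour convergence $\bigl(C_k(2kt)/\sqrt{2k}\bigr)_{t\in[0,1]}\Rightarrow(\e_t)_{t\in[0,1]}$ that your second step establishes.
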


Another renormalization technique is used to obtain continuous limits with infinite volume, this is obtained when the size and the diameter tends to infinity in a suitable way.
\begin{defn}
	Let $W^+$ and $W^-$ be two independent standard Brownian motions started at 0. We define the process $(W(t): t\in \R)$ as 
	\begin{align*}
		W(t)=\begin{cases}
			W^{+}(t) & \text{ if } t\geq 0,\\
			W^{-}(-t) & \text{ if } t< 0.
		\end{cases}
	\end{align*}
	The Infinite continuous random tree (ICRT) $\Rtt_\infty$ is defined as the random tree $(\TT_W,\dd_W)$.
\end{defn}
The next result says how we can obtain the ICRT from a discrete tree.
\begin{prop}[Theorem 11 ii)\cite{AL91}] \label{t.ICRT}
	Let $\rtt_m$ be a uniformly chosen tree with $m$ edges and consider it as a metric space with its natural graph distance. Consider also any sequence $(k_m:m\in\N)$ satisfying $k_m\to\infty$ and $k_m  m^{-1/2}\to 0$. Then $ k_m^{-1}\rtt_m$ converges in law to the metric space ICRT for the Local Gromov-Hausdorff topology.
\end{prop}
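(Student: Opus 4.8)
This is \cite[Theorem~11(ii)]{AL91}; for completeness I sketch an argument routing the convergence through Kesten's infinite tree $\rtt_\infty$ (\Cref{t.ICT}) and Donsker's theorem for the two‑sided walk encoding of \Cref{prop:rwdes}. Since $\dd_{LGH}$ is controlled, through the integral over $r$, by the Gromov--Hausdorff distances of the truncations $\Tr{\cdot}{r}$, it suffices — together with the routine tightness in the local Gromov--Hausdorff topology, which follows from the same estimates — to prove that for every fixed $R>0$,
\[
\Tr{k_m^{-1}\rtt_m}{R}\ \Longrightarrow\ \Tr{\Rtt_\infty}{R}\qquad\text{weakly for }\dd_{GH}.
\]
Fix $R$ and put $h_m:=R\,k_m$, so $h_m\to\infty$ while $h_m=o(\sqrt m)$; this last fact is the only use of the assumption $k_m m^{-1/2}\to 0$ (the condition $k_m\to\infty$ being used separately).

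\emph{Step 1: a Benjamini--Schramm coupling up to radius $o(\sqrt m)$.} I would first show that $\rtt_m$ and $\rtt_\infty$ induce the same ball of radius $h_m$ about the root with probability tending to $1$; equivalently, $\Tr{\rtt_m}{h_m}$ and $\Tr{\rtt_\infty}{h_m}$ are asymptotically equal in total variation. This is a quantitative form of \Cref{t.ICT}. Recall $\rtt_m$ is a critical geometric Galton--Watson tree conditioned to have $m$ edges and $\rtt_\infty$ is the associated size‑biased (Kesten) tree. On the $\sigma$‑algebra generated by the ball $B$ of radius $h$, the law of $\rtt_\infty$ has Radon--Nikodym density $Z_h(B)$ relative to the unconditioned Galton--Watson law, where $Z_h$ is the number of vertices at distance $h$; and the law of $\rtt_m$ has density proportional to $\P(\text{a forest of }Z_h(B)\text{ critical trees has }m-e(B)\text{ edges})$, with $e(B)$ the number of edges of $B$. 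A local limit theorem for the total progeny of critical finite‑variance Galton--Watson forests gives $\P(\text{a forest of }j\text{ trees has }N\text{ edges})\sim c\,j\,N^{-3/2}$, uniformly as long as $j=o(\sqrt N)$; since $Z_{h_m}(B)=o(\sqrt m)$ and $e(B)=O(h_m^2)=o(m)$ with high probability under either law (by Markov's inequality, using $\E_{\mathrm{Kesten}}[Z_h]\asymp h$), the two densities agree up to a factor $1+o(1)$ on a high‑probability event, which gives the claimed total‑variation bound.

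\emph{Steps 2--3: Donsker and continuity of the coding map.} By Step 1 it is enough to prove $\Tr{k_m^{-1}\rtt_\infty}{R}\Rightarrow\Tr{\Rtt_\infty}{R}$. Write $\rtt_\infty=(\TT_X,\dd_X)$ with $X$ the two‑sided simple random walk of \Cref{prop:rwdes} and $\Rtt_\infty=(\TT_W,\dd_W)$ with $W$ the two‑sided Brownian motion. Rescaling the metric of a coded tree by $c$ is the same as rescaling the coding function by $c$, and relabelling its time by $c^{-2}$ is an isometry; hence $k_m^{-1}\TT_X$ is isometric to $\TT_{X_m}$ for $X_m(t):=k_m^{-1}X(\lfloor k_m^2 t\rfloor)$ (linearly interpolated). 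Donsker's theorem applied to the two independent halves $X^\pm$ gives $X_m\Rightarrow W$ in $C(\R,\R)$ with the topology of uniform convergence on compacts, and one passes to an almost surely convergent coupling. Finally I would invoke the standard stability of the Aldous--Le Gall coding map (cf.\ \cite{LG05}): since $\dd_W(0,t)\ge -\inf_{[0\wedge t,\,0\vee t]}W\to\infty$ as $|t|\to\infty$ almost surely, the $R$‑ball of $\TT_W$ is the quotient of a bounded interval and is compact, and for $f$ uniformly close to $W$ on a slightly larger window the identity $\dd_f(s,t)=f(s)+f(t)-2\inf_{[s,t]}f$ forces all pairwise distances inside the $R$‑ball to move by $o(1)$; as $\{t:\dd_W(0,t)=R\}$ is a.s. Lebesgue‑null this yields $\dd_{GH}(\Tr{\TT_{X_m}}{R},\Tr{\TT_W}{R})\to 0$ a.s. Combining the three steps proves the displayed convergence, hence the proposition.

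The main obstacle is Step 1: promoting the fixed‑radius local limit $\rtt_m\to\rtt_\infty$ to a coupling valid up to radius $h_m\to\infty$. Donsker and the continuity of the coding map are routine, whereas Step 1 is exactly where one must quantify how quickly the size‑$m$ conditioning is forgotten near the root — and this is what singles out $k_m\asymp\sqrt m$ as critical: if $k_m$ were of order $\sqrt m$, the ball of radius $k_m$ would already contain a positive fraction of the $m$ edges, and the conditioning could not be ignored.
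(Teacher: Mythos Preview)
The paper does not prove this proposition: it is simply recorded as a citation of \cite[Theorem~11(ii)]{AL91}, with no argument following. There is therefore no ``paper's own proof'' to compare against.

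That said, your sketch is a reasonable self-contained route and is essentially correct. Factoring through Kesten's tree $\rtt_\infty$ is natural: Step~1 is the only place with real content, and the Radon--Nikodym comparison via the local limit theorem for critical forest sizes is the right mechanism. Two small points worth tightening. First, in Step~1 you want the ratio of the two densities to tend to $1$ in $L^1$ (under, say, the Kesten law restricted to the ball $\sigma$-algebra), not merely to be close to $1$ on a high-probability event; this is what gives total-variation proximity, and it does follow from the uniform local limit estimate you quote together with the moment bound $\E_{\mathrm{Kesten}}[Z_h]\asymp h$, but it deserves to be stated. Second, in Step~3 the continuity of $f\mapsto \Tr{\TT_f}{R}$ is delicate exactly at radii $R$ that are local extrema of $t\mapsto \dd_W(0,t)$; your observation that $\{t:\dd_W(0,t)=R\}$ is Lebesgue-null handles this for Lebesgue-a.e.\ $R$, which, via the integral defining $\dd_{LGH}$, is enough. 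With those caveats the argument goes through; Aldous's original proof in \cite{AL91} works instead directly with the rescaled contour process of $\rtt_m$ and a conditioned Donsker theorem, avoiding the detour through $\rtt_\infty$, but your route is arguably more in the spirit of the present paper's repeated use of local limits and absolute continuity.
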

Again, it is possible to obtain a description by means of a random walk.
\begin{defn}\label{def:BICRT}
	Let $Z^+$ and $Z^-$ be two independent standard Brownian motions started at 0 with $Z^{-}$ conditioned to stay positive (i.e. $Z^{-}$ has the law of a Bessel-3 process). We define the process $(Z_t: t\in \R)$ as 
	\begin{align*}
		Z(t)=\begin{cases}
			Z^{+}(t) & \text{ if } t\geq 0,\\
			Z^{-}(-t) & \text{ if } t< 0.
		\end{cases}
	\end{align*}
	The bi-infinite continuous random tree $\Rtt_{-\infty}^\infty$ is the random tree defined by  $(\TT_Z,\dd_Z)$.
\end{defn}

\subsection{Infinite quadrangulations with a boundary}
We present here the limiting objects of quadrangulations with a boundary in different topologies.
Again we start with the local limit.
\subsubsection{$\UIHPQ$ and its peeling}\label{ss.UIHPQ}
The uniform infinite half-plane quadrangulation with simple boundary ($\UIHPQ$) is the local limit of a well-chosen quadrangulation with a simple boundary. In this section, we will shortly present it and describe its Markov property.

Let $\qq^b_{f,m}$ be a uniformly chosen element in $Q^b_{f,m}$ the set of planar quadrangulation with a simple boundary of size $2m$ and $f$ internal faces. It is easy to see that there exists a random variable $\qq^{b}_{m}$ on infinite quadrangulation with a simple boundary of size $2m$, such that $\qq^b_{f,m}$ converges to $\qq^{b}_{m}$ as $f$ goes to $\infty$ for the Benjamini-Schramm topology. We define the $\UIHPQ$, $\qq$, as the law on random quadrangulation with a simple boundary of infinite size as  that arises as the limit in law of $\qq^{b}_{m}$ as $m$ grows to $\infty$ (see \cite{CM12,CC18}). This limit is one ended, in the sense that if one takes out one square of $\qq$ the complement of the graph has only one infinite connected component. In this limit the curves $\gb_m$ coding the boundary of $\qq^{b}_{m}$ also converge to a limit $\gb$ which is a parametrisation of $\Z$ and therefore the convergence holds in the Benjamini-Schramm Uniform topology. Furthermore, if one shifts the root-edge in the boundary to another boundary point, the law of the resulting map is the same as the original one \cite{CC18}, this property is called invariance under rerooting.

The $\UIHPQ$ $\qq$ satisfies an interesting Markov property. Assume that one conditions on all the quadrilaterals $Q^r\in \qq$ that contain the root vertex $r$ of $\qq$. Then, the unbounded connected component of $\qq\backslash Q^r$ (rooted properly) also has the law of a $\UIHPQ$.

\begin{figure}
	\includegraphics[scale=0.8]{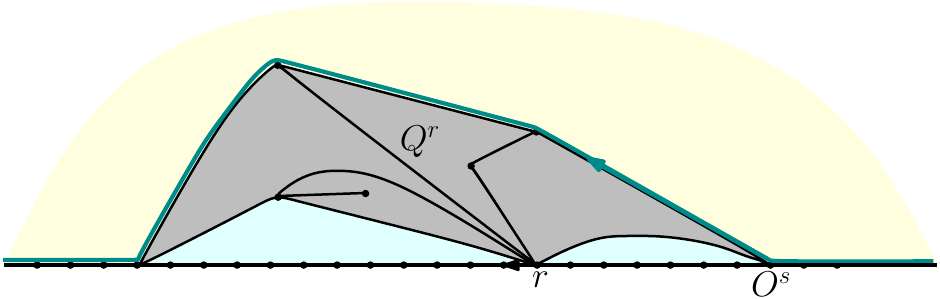}
	\caption{Markov property sketch. In a $\UIHPQ$ conditioning on $Q^r$, the light blue parts are given by Botzmann quadrangulations conditioned to their boundary size and the yellow part is again a $\UIHPQ$ we put the cyan line to represent the simple boundary of the yellow part.}
\end{figure}

For the $\UIHPQ$, we associate its boundary vertices with $\Z$ and its root edge to the one going from $0$ to $-1$. Let us define the simple overshoot $O^s(0)$ from the vertex $0$. To do this, we take all edges connected to $0$ and we look for the ones that intersect $\Z^+$, the over-shoot is then taken as the maximum value of that intersection (and it is $0$ if not). One has that \cite{CC19}
\begin{align} \label{overshoot:proba}
	\P(O^s(0)\geq k)\asymp k^{-3/2},\quad \text{ as } k\nearrow \infty.
\end{align}
We can define, now, the \textsc{(infinite) overshoot} from $0$. The (infinite) overshoot $O(0)$ is the biggest positive $z$ such that there is a face containing $z$ and a vertex in the negative boundary (it is $0$ if there is no such an edge). By a summation over $O^s(-j)$ one has that \cite{CC19}
\begin{align*}
	\P(O(0)\geq k)\asymp k^{-1/2},\quad \text{ as } k\nearrow \infty.
\end{align*}

\subsubsection{Brownian half-plane} The Brownian half-plane (BHP) arises as the limit in distribution of the $\UIHPQ$ in the Local Gromov-Hausdorff topology. More formally $\lambda \cdot \UIHPQ$ converges in law in the Local Gromov-Hausdorff to the $BHP$ as $\lambda \to 0$ \cite[Theorem 3.6]{BGR19}. 

\begin{prop}\label{prop:SM}
	The Brownian half-plane is invariant under rerooting and this operation is strong mixing.
\end{prop}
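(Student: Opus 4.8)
The plan is to transfer both statements from the discrete model $\UIHPQ$, using $\lambda\cdot\UIHPQ\to\HHH$ in the local Gromov--Hausdorff--Uniform topology (\ref{ss.UIHPQ}) together with the corresponding, more elementary, features of the $\UIHPQ$: its invariance under moving the root edge along the boundary, and its peeling/spatial Markov property.

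\textbf{Invariance under rerooting.} In the scaling limit the boundary edge–counting measure of the $\UIHPQ$, rescaled by the power of $\lambda$ under which distances are multiplied by $\lambda$, converges to the boundary length measure of $\HHH$; this measure a.s.\ has full support and no atoms, so the operation $\theta_s$ of rerooting $\HHH$ at the boundary point at boundary-length $s$ from the root is a.s.\ continuous at $\HHH$, and hence $(\lambda\cdot\UIHPQ,\ \theta_s(\lambda\cdot\UIHPQ))$ converges jointly to $(\HHH,\ \theta_s\HHH)$. Since rerooting the $\UIHPQ$ by any number of boundary edges preserves its law exactly, letting $\lambda\to0$ yields $\theta_s\HHH\eqd\HHH$ for every $s\in\R$.

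\textbf{Strong mixing.} Since the local GHU topology is generated by the truncation maps $X\mapsto\Tr{X}{R}$, $R\in\Q_{>0}$, it suffices to prove that for each fixed $R>0$ the pair $\big(\Tr{\HHH}{R},\ \Tr{\theta_s\HHH}{R}\big)$ converges in law, as $s\to\infty$, to two independent copies of $\Tr{\HHH}{R}$; by a routine approximation argument (using that $\theta_s$ is measure preserving) this forces $\P(\HHH\in A,\ \theta_s\HHH\in B)\to\P(\HHH\in A)\,\P(\HHH\in B)$ for all events $A,B$. To obtain this asymptotic independence I would run a continuum analogue of the peeling exploration of $\HHH$: simultaneously reveal the metric balls of radius $R$ around the root and around the boundary point at length $s$; show that for large $s$ these two explorations are disjoint with probability tending to $1$; and then invoke the spatial Markov property of $\HHH$ to conclude that, conditionally on disjointness, the two balls are independent copies of $\Tr{\HHH}{R}$. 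Alternatively, the same conclusion can be reached from the discrete by a careful diagonal argument, combining the mixing of the $\UIHPQ$ boundary shift (itself a consequence of its peeling/spatial Markov property) with a quantitative decorrelation rate so that the limits in $\lambda$ and in $s$ may be interchanged.

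\textbf{Main obstacle.} The crux is the decorrelation estimate: one must bound the probability that the two explorations collide --- equivalently, that a geodesic from the root to a point within distance $R$ of it strays more than $R'$ along the boundary --- and show that it vanishes as $R'\to\infty$. This ``confinement'' input --- whether formulated in the $\UIHPQ$, in $\HHH$, or extracted from a Brownian-type encoding of $\HHH$ in which distances inside an $R$-ball depend only on an $O_\P(1)$-sized window of the encoding processes while the long-range infimum terms in the distance formula are with high probability attained locally --- is where the genuine work lies; the reductions to finite truncations and the passage between the discrete and continuum models are routine by comparison.
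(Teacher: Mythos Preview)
Your approach is essentially the paper's: rerooting invariance is inherited from the $\UIHPQ$ via the scaling limit, and strong mixing is obtained from the spatial Markov property of $\HHH$ together with rerooting invariance, exactly as you outline in your first alternative. The paper packages the Markov input as a separate statement (Proposition~\ref{prop:markovBHP} in the appendix: the filled-in ball $B^\bullet_r(\HHH)$ and its complement are independent, the complement being again a BHP), proved by passing the discrete peeling Markov property through the local Gromov--Hausdorff limit, and then follows the short argument of \cite[Lemma~2]{CC19}.

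One remark on emphasis: you flag the ``confinement'' estimate (that the two $R$-balls eventually do not collide) as the crux. In fact this step is soft once the Markov property is available: the filled-in ball $B^\bullet_R(\HHH)$ meets $\partial\HHH$ in an a.s.\ bounded interval, and $\dd_\HHH(\Gh(0),\Gh(s))\to\infty$ as $s\to\infty$ (e.g.\ by scaling, or since $\HHH$ is boundedly compact), so for large $s$ the $R$-ball around $\Gh(s)$ lies entirely in the complement $\HHH'$ and its intrinsic and ambient metrics agree there. The genuine input is the Markov property of $\HHH$ itself, which you invoke but do not justify; the paper supplies it by the same discrete-to-continuum transfer you use for rerooting invariance. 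Your second alternative (a diagonal interchange of limits in $\lambda$ and $s$ directly from discrete mixing) is not needed and would indeed require the quantitative control you describe.
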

\begin{proof}[Proof of \Cref{prop:SM} ]
	Invariance under rerooting is inherited from the $\UIHPQ$ and the strong mixing property is a consequence of the Markov property (on filled-in balls with target point at infinity) of the Brownian half-plane \Cref{prop:markovBHP} and its invariance under rerooting. More precisely, combining these properties and following the same lines as in \cite[Lemma 2]{CC19} one obtains that thanks to the Markov property the balls around the root and the remainder of the map (which is distributed also as a Brownian half-plane properly rooted, thanks to the invariance under rerooting) are asymptotic independent as the distance between the two roots goes to infinity, this asymptotic independence let us conclude the strong mixing condition.
\end{proof}

\subsubsection{Brownian disk} The Brownian disk appears as the scaling limit of a planar map with a boundary of appropriate size. This is done in the following theorem  which is a slight improvement of the main result of \cite{BCFS} whose proof can be found in Section \ref{a.b}.
\begin{thm}\label{t.BD} Fix $\sigma\in \R^+$. A sequence $(\rqq_{f,\sigma \sqrt{f}}^b,\gb)_{f\in \N}$ of uniformly chosen quadrangulations with a simple boundary of size $2\lfloor\sigma \sqrt{f}\rfloor$ and $f$ internal faces . Then, $((9f/8)^{-1/4}\rqq_{f,\sigma \sqrt{f}}^b,\gb)_{f\in \N}$ converges in law to the (decorated) Brownian disk  $(\QQQ_{3\sigma},\Gb)$ with perimeter $3\sigma$ and area 1 for the Gromov-Hausdorff uniform topology. Furthermore, $\QQQ_{3\sigma}$ has the topology of the disk.
\end{thm}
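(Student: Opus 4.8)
The plan is to build on the undecorated Gromov-Hausdorff convergence of $(9f/8)^{-1/4}\rqq_{f,\sigma\sqrt f}^b$ to a Brownian disk established in \cite{BCFS}, and to add the two features that make \Cref{t.BD} an improvement of it: the decoration by the boundary curve (equivalently, the passage from the Gromov-Hausdorff to the Gromov-Hausdorff-uniform topology) and the precise identification of the limiting parameters. For the latter one reads off from the encoding of quadrangulations that the correct distance rescaling is $(9/8)^{1/4}f^{-1/4}$, the same constant as for the Brownian map; that rescaling the face-counting measure by $f^{-1}$ produces a total area equal to $1$; and that the limiting boundary-length measure is the weak limit of the counting measure on the $2\lfloor\sigma\sqrt f\rfloor$ boundary vertices rescaled by $\tfrac{3}{2}f^{-1/2}$, the factor $\tfrac{3}{2}$ being the known ratio between the combinatorial boundary length of a quadrangulation with a \emph{simple} boundary and the intrinsic boundary length of its scaling limit, so that the limiting perimeter equals $3\sigma$. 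That $\QQQ_{3\sigma}$ is homeomorphic to the closed disk is a known property of the Brownian disk and is recorded in \cite{BCFS}.

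The heart of the argument is the upgrade to the Gromov-Hausdorff-uniform topology, for which I would use the precompactness criterion of \Cref{compGHU}: since the sequence already converges in the Gromov-Hausdorff sense it is Gromov-Hausdorff precompact, so it remains to prove that the family of rescaled boundary curves $\{(9f/8)^{-1/4}\gb\}_{f}$ is equicontinuous and to identify every subsequential limit. Equicontinuity is a uniform modulus-of-continuity estimate: for every $\eta>0$ there should exist $\delta>0$ such that $\P\big(\sup_{|i-j|\le \delta\sqrt f}(9f/8)^{-1/4}d_{\rqq_{f,\sigma\sqrt f}^b}(\gb(i),\gb(j))>\eta\big)\to 0$ as $f\to\infty$, the supremum being over boundary indices. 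I would obtain this from the labelled encoding of quadrangulations with a boundary underlying \cite{BCFS}: distances between two boundary points are bounded in terms of increments and local infima of the label process, which after rescaling converges to a continuous Brownian-type process; a union bound over dyadic index-scales together with the H\"older regularity of the limit process and a standard fluctuation bound for the discrete label process yields the estimate (this is the same chaining scheme used for the convergence of the boundary curves of the Brownian map and of Brownian disks). Granting equicontinuity, \Cref{compGHU} gives precompactness in the Gromov-Hausdorff-uniform topology, so along any subsequence there is a limit $(\QQQ_{3\sigma},\Gamma)$ whose metric part is already identified; the curve $\Gamma$ takes values in $\partial\QQQ_{3\sigma}$ and, since $\gb$ visits the $2\lfloor\sigma\sqrt f\rfloor$ boundary vertices in cyclic order at unit speed while the boundary-length measure of $\QQQ_{3\sigma}$ is the weak limit of $\tfrac{3}{2}f^{-1/2}$ times the counting measure on those vertices, $\Gamma$ is forced to be the constant-speed parametrisation of $\partial\QQQ_{3\sigma}$, i.e. $\Gamma=\Gb$. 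This pins down the subsequential limit and hence the convergence holds in the Gromov-Hausdorff-uniform topology.

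I expect the \textbf{main obstacle} to be this equicontinuity estimate uniformly in $f$: everything else is either quoted from \cite{BCFS} or is routine bookkeeping of constants and of the disk topology, whereas a quantitative control of the rescaled distance between two boundary vertices in terms of their index separation, valid with probability tending to $1$, requires a careful use of the label-process encoding adapted to a simple boundary together with a chaining argument over scales. Before setting this up from scratch, I would first check whether the estimates already proved in \cite{BCFS} are quantitative enough to extract the modulus of continuity directly.
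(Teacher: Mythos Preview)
Your strategy is sound and would work, but it is not how the paper proceeds. The paper does not set up a direct equicontinuity/chaining estimate for the boundary curve of a uniform quadrangulation with \emph{simple} boundary. Instead it recycles the $\varepsilon$-restriction machinery of \cite{BCFS}: one explores the map from an interior vertex towards a target point placed at one third of the perimeter, stopping the first time the exploration hits the boundary arc $[1/3-\varepsilon,1/3]$; the volume and perimeter estimates already proved in \cite{BCFS} show that the unexplored complement has area and boundary length of the same order as the whole map and shrinks to zero with $\varepsilon$. Conditionally on the restriction, the complement is a free Boltzmann quadrangulation with simple boundary, for which the stronger Gromov--Hausdorff--Prokhorov--Uniform convergence is already available from \cite{GM}. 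Patching the restriction and its complement then upgrades the Gromov--Hausdorff convergence of \cite{BCFS} to GHPU (hence in particular GHU). The disk topology is simply quoted.

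The trade-off is clear. The paper's route is short precisely because it never proves a new modulus-of-continuity estimate: it reduces to the Boltzmann case where everything is known, at the cost of importing the whole $\varepsilon$-restriction framework. Your route is more self-contained and conceptually direct, but the equicontinuity bound for the \emph{simple} boundary case is genuinely the hard step, and the label encoding there is less explicit than in the general-boundary case, so the chaining you sketch would require real work. One small caution on your identification step: concluding that the limit curve $\Gamma$ equals the constant-speed boundary parametrisation $\Gb$ uses convergence of the rescaled boundary counting measure, which is the Prokhorov part of GHPU and is not a formal consequence of GHU precompactness alone; you would need to argue this separately (or, as the paper implicitly does, get it for free from the Boltzmann GHPU result).
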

This theorem was first proved for the uniform case when the boundary is not simple in \cite{BetM} and also in the case of Free Boltzmann quadrangulations with simple boundaries in \cite{GM} and then generalized to the case of uniform quadrangulation with simple boundary in \cite{BCFS}. The topology of the limit was first described in Section 2.3 of \cite{BetM}, and it can be shown that a.s. $\Gb(t) \in \partial \QQQ_{3\sigma}$ for all $t\in \S$.

\subsection{Definition of the shocked-map}\label{SMdef}
Here we introduce the candidate for the scaling limit of the critical tree decorated map by doing the analogue of the bijection of Section \ref{ss.tdm} in the continuum setting.

\begin{defn}[Shocked map]\label{d.sm}
	Let $\sigma>0$, $(\QQQ_{\sigma},\dd^{BD})$ a Brownian Disk with boundary of size $\sigma$ and let $\Crt$ be an independent CRT. Take $\Gb: \S^1 \to \partial \QQQ_{\sigma}$  the continuous curve that visits  $\partial \QQQ_{\sigma}$ at unit speed starting at the root edge and $\Gtt: \S^1 \to \Crt$ the contour exploration\footnote{The contour exploration associated to the CRT is the curve generated by the image of the identity function in $[0,1]$ under the glueing of the Brownian excursion used to create it.} of $\Crt$. The shocked map of size $\sigma$ is the (curve-decorated) metric space $(\Smm_\sigma,\dd^{SM},\Gs)$, obtained by starting with $(\QQQ_{\sigma},\dd^{BD})$ and identifying all points in $x,y \in  \partial \QQQ_{\sigma}$ such that
	\begin{align*}
		\Gtt\circ(\Gb)^{-1}(x)=\Gtt\circ(\Gb)^{-1}(y).
	\end{align*}
	Here $\Gs$ is the curve defined as the image of $\Gb$ under this identification.
\end{defn}
Let us give another equivalent description of the shocked map in the usual language of metric spaces defined as equivalent classes of pseudo-distances.
\begin{rem}
	To identify $\QQQ_{\sigma}$ using $\Gb$ and $\Gtt$, we define the pseudo-distance
	\[
	\dd^{SM}(x,x')= 
	\inf\Big\{\sum_{i=0}^{k}\min \{\dd_{D}(x_i^b,y_{i}^b) \}
	\Big\},
	\]
	where the infimum is taken over all $k\geq 0$, sequences
	$t_0,s_1,t_1,s_2,t_2,\dots,s_k\in \S^1$, such that $x_0^b=x$, $y_k^b = x'$ and for all other $i\in \cro{0,k}$
	\begin{align*}
		x_i^b:= \Gb(s_i)\text{ and }y_i^{b} := \Gb(t_i),
	\end{align*}
	such that $\Gtt(t_i)=\Gtt(s_{i+1})$. Then, $(\Smm_\sigma,\dd^{SM},\Gs)$ is the (curve-decorated) metric space where $\Smm_\sigma$ is given by $[0,1]/\{\dd^{SM} = 0\}$.
\end{rem}

Again we can define the infinite volume version of this object

\begin{defn}[Infinite shocked map]\label{d.sminf}
	Let $(\HHH_{\infty},\dd_{\HHH_\infty})$ be a Brownian half-plane and let $(\Rtt_\infty,\dd_\Rtt)$ be an ICRT. Take $\Gb: \R \to \partial \HHH$  the continuous curve that visits  $\partial \HHH_\infty$ at unit speed starting at the root edge and $\Gtt: \R \to \Rtt_\infty$ the contour exploration of $\Rtt_\infty$. The infinite shocked map is the (curve-decorated) metric space $(\Smm_\infty,\dd_{\Smm_\infty},\Gs)$, obtained by starting with $(\HHH_{\infty},\dd_{\HHH_\infty})$ and identifying all points in $x,y \in  \partial \HHH_\infty$ such that
	\begin{align*}
		\Gtt\circ(\Gb)^{-1}(x)=\Gtt\circ(\Gb)^{-1}(y).
	\end{align*}
	Here $\Gs$ is the curve defined as the image of $\Gb$ under this identification.
\end{defn}

We will use the bi-infinite version as it appears as an ``intermediate'' object for our proof. 
\begin{defn}[Bi-infinite shocked map]\label{d.smbinf}
	Let, $(\HHH_{\infty},\dd_{\HHH_\infty})$ be a Brownian half-plane and let $(\Rtt_{-\infty}^\infty,\dd_\Rtt)$ be a bi-infinite continuous random tree. Take $\Gb: \R \to \partial \HHH$  the continuous curve that visits  $\partial \HHH_\infty$ at unit speed starting at the root edge and $\Gtt: \R \to \Rtt_{-\infty}^\infty$ the contour exploration of $\Rtt_{-\infty}^\infty$. The bi-infinite shocked map is the (curve-decorated) metric space $(\Smm_{-\infty}^{\infty},\dd_{\Smm_{-\infty}^\infty},\Gs)$, obtained by starting with $(\HHH_{\infty},\dd_{\HHH_\infty})$ and identifying all points in $x,y \in  \partial \HHH_\infty$ such that
	\begin{align*}
		\Gtt\circ(\Gb)^{-1}(x)=\Gtt\circ(\Gb)^{-1}(y).
	\end{align*}
	Here $\Gs$ is the curve defined as the image of $\Gb$ under this identification.
\end{defn}

\begin{rem}
	The meaning of ``intermediate'' comes from the fact that the boundary of the bi-infinite shocked map can be seen as a copy of $\R$ such that when identifying the part associated with $\R^+$  and $\R^{-}$, one gets the infinite shocked map.
\end{rem}

\section{Infinite continuous volume}\label{s.ic}
In this section we show that two elements at distance $n$ on the spine of the ICRT are mapped by the gluing to two points that are at distance $o(n)$. To be more precise, we consider a Brownian half-plane $\HHH_{\infty}$, an independent infinite continuous random tree $\Rtt_\infty$. Let $\Gh:\R\mapsto \partial \HHH_{\infty}$ be the exploration of the boundary of $\HHH_\infty$ such that $\Gh(0)$ is the root vertex of $\HHH_\infty$ and $\Gtt:\R\mapsto \partial \Rtt_\infty$ be the contour exploration of $\Rtt_\infty$. We define $(\Rmm_\infty,\Gr)$ as the glueing of $\HHH_\infty$ using the equivalence class generated by the curves $\Gh$ and $\Gtt$ as for Definition \ref{d.sm}. The aim of this section is to show that the image of the boundary under the glueing is just one point.
\begin{thm}\label{t.main_section_3}
	The exploration function $\Gr:\R\mapsto \Rmm_\infty$ is constant.
\end{thm}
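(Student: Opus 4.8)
The plan is to show that the random constant $c = \lim_{n\to\infty} n^{-1} \dd_{\Rmm_\infty}(\Gr(0), \Gr(\tau_n))$ exists and equals zero, where $\tau_n$ denotes the point at distance $n$ along the (say, right) spine of $\Rtt_\infty$; by symmetry and the fact that the image of the whole boundary is connected and contains all spine images, this forces $\Gr$ to be constant. To get existence of $c$, I would invoke Kingman's subadditive ergodic theorem: set $a_{m,n} = \dd_{\Rmm_\infty}(\Gr(\tau_m), \Gr(\tau_n))$ for $m < n$. Subadditivity $a_{0,n} \le a_{0,m} + a_{m,n}$ is just the triangle inequality. The key point is that the increments are stationary and ergodic under the shift along the spine. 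For this I would use Proposition~\ref{prop:SM}: the Brownian half-plane is invariant under rerooting along its boundary and this shift is strong mixing; since the spine of the ICRT corresponds (via $\Gtt$) to a deterministic reparametrization of a stationary sequence of attached subtrees, rerooting the shocked map at $\tau_1$ gives back an object with the same law, and strong mixing of the underlying half-plane transfers to ergodicity of the increment sequence. Hence $c$ exists a.s.\ and is deterministic, and moreover $n^{-1}\dd_{\Rmm_\infty}(\Gr(0),\Gr(\tau_n)) \overset{(d)}{\to} c$ while each term is, by stationarity, distributed as $\dd_{\Rmm_\infty}(\Gr(0),\Gr(\tau_1))$ after rescaling — actually I would phrase it as: by rerooting invariance, $\dd(\Gr(0),\Gr(\tau_n))$ has a law not depending on how we decompose, and the Cesàro-type limit being deterministic combined with $\E[\dd(\Gr(0),\Gr(\tau_n))] \le$ (linear in $n$) pins down $c = \E[\dd(\Gr(0),\Gr(\tau_1))]$ up to the subadditive correction.

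Next I would prove $c = 0$. The strategy is to exhibit an event of positive probability on which a macroscopic chunk of spine gets glued with only a tiny contribution to the distance — this is where the overshoot estimates for the $\UIHPQ$ (equation~\eqref{overshoot:proba} and its summed version giving $\P(O(0) \ge k) \asymp k^{-1/2}$), transported to the Brownian half-plane, enter. Concretely, along the boundary of $\HHH_\infty$ the identifications induced by the spine of the ICRT pair boundary points $\Gb(s)$ and $\Gb(t)$ whenever $\Gtt(s) = \Gtt(t)$, i.e.\ whenever $s$ and $t$ straddle a subtree hanging off the spine; the "overshoot" structure says that with nontrivial probability there are boundary arcs of length of order $k$ that become contracted to within distance of order $k^{1/2}$ (or smaller) in the glued space — the tree acts like a zipper that closes up long boundary segments. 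Quantitatively, if along a stretch of spine of length $n$ one finds such a macroscopic contraction, the distance $\dd_{\Rmm_\infty}(\Gr(0),\Gr(\tau_n))$ is $o(n)$ on a positive-probability event. Since we already know $n^{-1}\dd_{\Rmm_\infty}(\Gr(0),\Gr(\tau_n))$ converges a.s.\ to the \emph{deterministic} constant $c$, having it be $o(1)$ with positive probability forces $c = 0$.

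Finally, from $c = 0$ I would conclude that $\Gr$ is constant as follows: $\dd_{\Rmm_\infty}(\Gr(0),\Gr(\tau_n)) = o(n)$, but I actually want $\dd_{\Rmm_\infty}(\Gr(0),\Gr(\tau_n)) = 0$ for all $n$. Here I use the rerooting invariance once more together with the observation from the introduction that $n^{-1}\dd_{\Rmm_\infty}(\Gr(0),\Gr(\tau_n))$ is equal in law to $\dd_{\Rmm_\infty}(\Gr(0),\Gr(\tau_1))$: if the rescaled quantity has the same law as the unscaled distance to $\tau_1$ and its $n\to\infty$ limit is $0$, then $\dd_{\Rmm_\infty}(\Gr(0),\Gr(\tau_1)) = 0$ a.s., hence by rerooting $\dd_{\Rmm_\infty}(\Gr(\tau_i),\Gr(\tau_{i+1})) = 0$ for every $i$, so all spine points have the same image. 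Since the spine is dense in the parametrization of the ICRT contour in the appropriate sense — more precisely, every point of $\Rtt_\infty$ lies in a subtree hanging off the spine and gets identified accordingly, so its $\Gr$-image coincides with that of the attachment point — and $\Gr$ is continuous, $\Gr$ is constant on all of $\R$.

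The main obstacle I anticipate is the middle step: making the positive-probability contraction event rigorous in the continuum. One must transfer the discrete overshoot estimates to the Brownian half-plane (using the scaling convergence $\lambda\cdot\UIHPQ \to \mathrm{BHP}$) and, more delicately, control the interplay between the independent ICRT spine and the half-plane boundary so that a genuinely macroscopic length of boundary is contracted — not just individual pairs of points. The subadditive ergodic argument and the rerooting/self-similarity conclusion are comparatively routine once the ergodicity (from strong mixing of the BHP) and this contraction estimate are in hand.
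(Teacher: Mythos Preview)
Your overall architecture matches the paper's, but there is one genuine gap and one unnecessary detour.

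\textbf{The gap: the shift is not measure-preserving on the ICRT.} You run Kingman directly on the infinite shocked map $\Rmm_\infty$, claiming that rerooting at $\tau_1$ gives back the same law. It does not. In $\Rtt_\infty$ the spine is one-ended; if you reroot at the spine vertex at height $1$, the old root together with its two attached subtrees gets absorbed into one of the subtrees hanging at the new root, so that subtree no longer has the law of a critical Galton--Watson tree (or, in the continuum, of a Brownian excursion). This is precisely why the paper passes to the \emph{bi-infinite} object $(\Rtt_{-\infty}^\infty,\HHH_\infty)$: there the spine is a copy of $\Z$, the shift $\sh$ is a genuine translation, and Lemma~\ref{lem1} verifies it is measure-preserving and strong mixing. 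Only after proving $\dd_{\Rmm_{-\infty}^\infty}(\Gm(s),\Gm(t))=0$ does the paper return to $\Rmm_\infty$, observing that $\Rmm_\infty$ is a further quotient of $\Rmm_{-\infty}^\infty$ (identify $\kappa_y$ with $\kappa_{-y}$), so its distances are even smaller.

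\textbf{The detour: overshoot is the wrong tool for $c=0$.} You propose to show $c=0$ by transporting the discrete overshoot estimates \eqref{overshoot:proba} to the Brownian half-plane and exhibiting a positive-probability contraction event. This is both harder than necessary and not what the paper does; those estimates appear only in Section~\ref{s.id} for the discrete \emph{lower} bound. The paper's argument is the one you already use in your final step: by Brownian scaling of both the half-plane boundary process and the contour, $u^{-1}\dd_{\Rmm}(\kappa_0,\kappa_u)\stackrel{(d)}{=}\dd_{\Rmm}(\kappa_0,\kappa_1)$ for every $u>0$ (Lemma~\ref{lem2}). Hence the deterministic constant $c$ has the law of $\dd_{\Rmm}(\kappa_0,\kappa_1)$; but $\dd_{\Rmm}(\kappa_0,\kappa_1)\le \dd_{\HHH}(\Gh(0),\Gh(\tau_{-1}))$, which is $\le\varepsilon$ with positive probability for every $\varepsilon>0$, forcing $c=0$. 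You have all the ingredients for this cleaner route; invoking overshoot here is a red herring.
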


To show this, it is easier to work with $\Rmm_{-\infty}^\infty$, which is the result of glueing a Brownian half-plane with a bi-infinite CRT $\Rtt_{-\infty}^\infty$. Recall from \Cref{def:BICRT} that $\Rtt_{-\infty}^\infty$ is defined from a process $Z$ which is a Brownian motion started from 0 in the positive axis and a Brownian motion started from 0 and conditioned to be positive (seen backwards) in the negative axis. For simplicity in this section we denote by $\dd_{\Rtt}$ the metric on $\Rtt_{-\infty}^{\infty}$ and for any $y\in \R^+$, we define 
\begin{align}\label{tau}
	&\tau_{y}=\inf\{t\in \R: Z(t)=y  \}
\end{align}

Furthermore, for $y\in \R$,  we define $\kappa_y:= \Gr(\tau_{-y})$ which is the image of the root of the $y$-th tree under the glueing. To show the theorem, we first start by showing that the distance in the infinite branch in the tree-decorated map is a constant times the distance in the tree itself.
\begin{prop}\label{prop1}
	There exists a constant $c \in [0,1]$ such that
	\[
	\frac{\dd_{\Rmm^{\infty}_{-\infty}}(\Phi^{\Rmm^{\infty}_{-\infty}}(\kappa_0),\Phi^{\Rmm^{\infty}_{-\infty}}(\kappa_n))}{n}\xrightarrow[n\to\infty]{a.s.} c.
	\]
\end{prop}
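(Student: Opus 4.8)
The plan is to apply Kingman's subadditive ergodic theorem to the family $X_{m,n}:=\dd_{\Rmm_{-\infty}^{\infty}}\!\big(\Phi^{\Rmm_{-\infty}^{\infty}}(\kappa_m),\Phi^{\Rmm_{-\infty}^{\infty}}(\kappa_n)\big)$, for integers $0\le m<n$, which is the quantity in the statement. Subadditivity $X_{0,n}\le X_{0,m}+X_{m,n}$ is the triangle inequality. For the integrability hypothesis and for the range of the limit I would use that gluing only contracts distances, together with the observation that the image of $\partial\HHH_\infty$ under the gluing is, as a set, a copy of $\Rtt_{-\infty}^{\infty}$, and so carries, besides $\dd_{\Rmm_{-\infty}^{\infty}}$, the intrinsic tree metric $\dd_{\Rtt}$; since in the discrete tree-decorated quadrangulation the edges of the tree are also edges of the quadrangulation, one has (discrete map distance) $\le$ (discrete tree distance) on the tree, and in the limit this reads $\dd_{\Rmm_{-\infty}^{\infty}}\le\dd_{\Rtt}$ on that image. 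Since the spine is parametrised by arc length, $\dd_{\Rtt}(\kappa_m,\kappa_n)=|n-m|$, hence $0\le X_{m,n}\le|n-m|$; in particular $X_{0,1}\le 1$ is bounded, so integrable, $\inf_n\E[X_{0,n}]/n\ge 0>-\infty$, and once the almost sure limit $c$ exists it satisfies $c\in[0,1]$.

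The substantive point is stationarity, i.e.\ producing a measure-preserving transformation $\theta$ of the underlying probability space with $X_{m,n}=X_{0,n-m}\circ\theta^m$. Informally $\theta$ shifts the spine of $\Rtt_{-\infty}^{\infty}$ by one unit of arc length and simultaneously rerootes $\HHH_\infty$ by the corresponding random boundary length $L_1=\tau_{-1}-\tau_0$. Two ingredients make $\theta$ measure preserving: (i) the spinal decomposition of the bi-infinite CRT --- the subtrees grafted along the spine, read off by arc length --- is translation invariant; and (ii) $\HHH_\infty$ is invariant under rerooting of its boundary (Proposition~\ref{prop:SM}). As the coding of $\Rtt_{-\infty}^{\infty}$ in Definition~\ref{def:BICRT} is visibly not translation invariant (Brownian motion on one side, Bessel-3 on the other), I would establish (i) by passing through the discrete model: in the bi-infinite critical geometric tree of Definition~\ref{p.description_tree_2} each spine vertex carries an i.i.d.\ critical geometric GW tree, so the spine-shift there is a Bernoulli shift and hence measure preserving; gluing this tree to an infinite-volume quadrangulation with boundary (whose law is invariant under rerooting) via the bijection of Section~\ref{ss.tdm}, the resulting discrete tree-decorated map is invariant under shifting the spine by one vertex, and $\theta$ is recovered as the scaling limit of these shifts, the discrete boundary-length shifts converging after rescaling to the shift by $L_1$.

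With $\theta$ in hand, Kingman's theorem yields $X_{0,n}/n\to\xi$ almost surely and in $L^1$, with $\xi$ a $\theta$-invariant random variable; to identify $\xi$ with a deterministic constant it remains to check that $\theta$ is ergodic, which follows from the strong mixing of $\HHH_\infty$ under rerooting (Proposition~\ref{prop:SM}) together with the decoupling of far-apart portions of the bi-infinite tree (whose far-apart subtrees are independent in the discrete model), so that $\theta$ is in fact mixing. Then $\xi=\E[\xi]=\lim_n\E[X_{0,n}]/n=:c\in[0,1]$ almost surely, as claimed. I expect the main obstacle to be precisely the setting up of (i)--(ii): making the ``shift the spine by one unit'' transformation rigorous at the level of the continuum object $\Rmm_{-\infty}^{\infty}$ and verifying that it is measure preserving and ergodic, the difficulty being that the canonical coding of the bi-infinite CRT is not translation invariant; the discrete-approximation route circumvents this but forces one to track how the boundary-length shift behaves through the scaling limit of the bijection. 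Everything else --- subadditivity, integrability, and the bound $c\le 1$ --- is soft once the comparison $\dd_{\Rmm_{-\infty}^{\infty}}\le\dd_{\Rtt}$ along the image of the boundary is granted.
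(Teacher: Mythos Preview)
Your proposal is correct and is essentially the paper's proof: subadditivity from the triangle inequality, the measure-preserving shift given by rerooting both the bi-infinite tree at the next spine point and the half-plane at the corresponding boundary point, ergodicity from strong mixing of the pair, and then Kingman's theorem; the bound $c\le 1$ via $\dd_{\Rmm}\le\dd_{\Rtt}$ on the image of the boundary is also what the paper uses.

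The one difference is your treatment of the tree shift. You worry that the asymmetric contour coding of $\Rtt_{-\infty}^{\infty}$ (Brownian motion on one side, Bessel-3 on the other) obstructs translation invariance along the spine, and propose to recover it by discrete approximation through the bi-infinite critical geometric tree. The paper stays entirely in the continuum: in Lemma~\ref{lem1} the half-plane part is handled by conditioning on the tree---which makes the boundary shift deterministic---and invoking rerooting invariance of $\HHH$, while the invariance and mixing of the tree under the spine shift are used directly, with a footnote citing rerooting invariance together with the Markov property of Brownian motion. Your detour through the discrete is not wrong, but it is unnecessary: viewed as a random real tree rather than through a particular coding, the bi-infinite CRT carries i.i.d.\ Brownian bushes along its spine, so the spine shift is already a Bernoulli shift at the continuum level and mixing is immediate.
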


We define the shift $\sh$ on the pair $(\Rtt_{-\infty}^\infty,\HHH_{\infty}):= ((\Rtt_{-\infty}^\infty,\Gtt),(\HHH_{\infty},\Gh))$ as \begin{align*}
	\sh(\Rtt_{-\infty}^\infty,\HHH_{\infty}) = (\sh_1(\Rtt_{-\infty}^\infty),\sh_2(\HHH_{\infty})),
\end{align*} where $\sh_1$ and $\sh_2$ are the shift of $\Gtt$ and $\Gh$ so that they starts in $\tau_{-1}$ instead\footnote{This is equivalent to reroot $\Rtt_{-\infty}^\infty$ and $\HHH_{\infty}$ such that the root is now in $\Gtt(\tau_{-1})$ and $\Gh(\tau_{-1})$ respectively } of $0$.  For the sake of notation \textit{until the end of this section} we  drop the indices $-\infty$ and $\infty$ in the objects appearing in the bi-infinite infinite volume shocked map construction.

\begin{lemma}\label{lem1}
	The shift $\sh$ is a measure preserving transformation. 
	Moreover it is strong mixing, i.e. for every $A, B\in \sigma(\Rtt)\times\sigma(\HHH)$ one has that 
	\[
	\P(\sh^{-n}(A) \cap B) \rightarrow \P(A )\P(B)
	\]
\end{lemma}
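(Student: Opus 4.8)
The plan is to establish the two assertions separately, both by transferring known properties from the two independent factors. For the measure-preserving property, recall that $\sh$ acts as the product $\sh = (\sh_1,\sh_2)$ on the independent pair $(\Rtt,\HHH)$. On the first coordinate, $\sh_1$ rer oots $\Rtt_{-\infty}^\infty$ at the spine vertex $\tau_{-1}$; since the bi-infinite CRT is built from the two-sided process $Z$ whose increments are stationary under the spatial shift along the spine (the positive half is a Brownian motion, the negative half a Brownian motion conditioned to stay positive, and moving the root by one spine-unit exchanges the roles consistently — this is exactly the content of the record decomposition used just after Definition \ref{def:BICRT}), the reroot is distribution-preserving. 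On the second coordinate, $\sh_2$ rer oots $\HHH_\infty$ along its boundary, and Proposition \ref{prop:SM} tells us the Brownian half-plane is invariant under rerooting. By independence of the two factors the product map preserves the product law, so $\sh$ is measure-preserving.

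For the strong mixing statement, the key point is that rerooting $\HHH_\infty$ far along its boundary decorrelates a neighbourhood of the old root from a neighbourhood of the new root. First I would fix events $A, B \in \sigma(\Rtt)\times \sigma(\HHH)$; by a standard approximation argument it suffices to treat $A,B$ in a generating algebra, so we may assume each of them is determined by the $r$-truncations $\Tr{\Rtt}{r}$ and $\Tr{\HHH}{r}$ for some finite $r$ (together with the relevant portion of the decorating curves). After applying $\sh^{-n}$, the event $\sh^{-n}(A)$ is measurable with respect to a neighbourhood of $\tau_{-n}$ in the tree and a neighbourhood of $\Gh(\tau_{-n})$ in the half-plane. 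On the half-plane side, Proposition \ref{prop:SM} gives precisely that rerooting is strong mixing, so the $\HHH$-parts of $\sh^{-n}(A)$ and $B$ become asymptotically independent as $n\to\infty$. On the tree side one uses the analogous mixing for the bi-infinite CRT: neighbourhoods of $\tau_0$ and $\tau_{-n}$ in $\Rtt_{-\infty}^\infty$ involve disjoint blocks of i.i.d.\ Brownian pieces of $Z$ once $n$ is large enough relative to $r$ (the spine portions between them are of macroscopic length), hence are exactly, or asymptotically, independent. Combining the two coordinate-wise asymptotic independences with the product structure of the law yields $\P(\sh^{-n}(A)\cap B) \to \P(A)\P(B)$.

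I expect the main obstacle to be the half-plane coordinate: making rigorous that the ``neighbourhood of $\Gh(\tau_{-n})$'' seen inside $\sh^{-n}(A)$ really is the filled-in ball around the rerooted root in the sense required by the Markov property of Proposition \ref{prop:markovBHP}, and that the quantities defining the event $A$ through the glued object $\Rmm$ (rather than through $\HHH$ alone) depend only on this bounded region. In other words, one must check that the gluing operation is local enough that truncations of $\Rmm$ are determined by truncations of $\HHH$ and $\Rtt$ of comparable radius — this is where the bookkeeping of the exploration curves $\Gh, \Gtt$ and the definition of $\dd^{SM}$ as an infimum over finite chains of boundary identifications has to be controlled. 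Once locality of the gluing is in hand, the mixing is a routine consequence of Proposition \ref{prop:SM} and the independence of the $Z$-blocks, so the two-to-four line argument above can be fleshed out without further surprises.
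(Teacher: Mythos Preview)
Your overall strategy matches the paper's, but there are two points where your write-up is loose in a way the paper is careful about, and one place where you are worrying about a non-issue.

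First, $\sh$ is \emph{not} a direct product of two transformations: the boundary reroot $\sh_2$ on $\HHH$ is by the amount $\tau_{-1}$, which is a function of $\Rtt$. So the sentence ``by independence of the two factors the product map preserves the product law'' does not apply as stated; what you have is a skew product. The paper handles this by conditioning on $\sigma(\Rtt)$ first: once $\Rtt$ is fixed, $\tau_{-1}$ is deterministic, and then rerooting invariance of $\HHH$ (Proposition~\ref{prop:SM}) gives that $\sh_2(\HHH)$ has the law of $\HHH$ and is independent of $\Rtt$. The same conditioning trick is used for mixing: for product events $A=A_1\times A_2$, $B=B_1\times B_2$ one writes
\[
\E\big[\mathds 1_{\sh^{-n}(A)\cap B}\big]
= \E\Big[\mathds 1_{\sh_1^{-n}(A_1)\cap B_1}\,\E\big[\mathds 1_{\sh_2^{-n}(A_2)\cap B_2}\,\big|\,\Rtt\big]\Big],
\]
bounds the inner conditional expectation by $\P(A_2)\P(B_2)\pm\varepsilon$ using strong mixing of $\HHH$ (valid because $\tau_{-n}\to\infty$ a.s.), and then uses strong mixing of the bi-infinite CRT for the outer factor. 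A $\pi$-system argument then extends to all of $\sigma(\Rtt)\times\sigma(\HHH)$; this replaces your truncation/locality reduction and is shorter.

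Second, your ``main obstacle'' is not one. The events $A,B$ in the statement live in $\sigma(\Rtt)\times\sigma(\HHH)$, i.e.\ in the product space \emph{before} gluing; the glued object $\Rmm$ plays no role in this lemma. There is no need to control how truncations of $\Rmm$ depend on truncations of $(\Rtt,\HHH)$ here --- that locality issue only enters later (in the proof of Proposition~\ref{prop1}) where the subadditive functional $g_n$ is defined on $\Rmm$, but even there one just uses that $g_n$ is measurable with respect to $(\Rtt,\HHH)$, not any locality.
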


\begin{proof}
	We start by proving that this transformation is measure preserving. Consider $f$ and $g$ a bounded $\sigma(\Rtt)$ measurable and $\sigma(\HHH)$ measurable functions respectively. 
	\begin{align*}
		\E(f(\sh_1(\Rtt))g(\sh_2(\HHH)))
		&= \E(\E(f(\sh_1(\Rtt))g(\sh_2(\HHH))| \sigma(\Rtt)))\\
		&= \E(f(\sh_1(\Rtt))\E(g(\sh_2(\HHH))| \sigma(\Rtt)))\\
		&= \E(f(\sh_1(\Rtt)))\E(g(\HHH)).
	\end{align*}
	Where the second equality comes from measurability and the third comes from the fact that knowing $\Rtt$, the shift $\sh_2$ becomes deterministic, and the fact that the Brownian half-plane is invariant under rerooting on the boundary. With this we conclude the independence of $\sh_1(\Rtt)$ and $\sh_2(\HHH)$ and moreover that $\sh_2(\HHH)$ is equal in distribution to $\HHH$.
	
	For the strong mixing property, it suffices to test it in any $\pi$-system $\Pi$ in $\sigma(\Rtt)\times \sigma(\HHH)$. Thus, we take $A=A_1\times A_2$ and $B = B_1\times B_2$ with $A_1,B_1 \in \sigma(\Rtt)$ and $A_2,B_2\in \sigma(\HHH)$.
	Then for every $\varepsilon>0$, there exists $n_0$ such that for all $n\geq n_0$ one has
	\begin{align*}
		\E\left(\mathds{1}_{\sh^{-n}(A)\cap B}\right) 
		&= \E\left(\mathds{1}_{\sh_1^{-n} (A_1)\cap B_1} \E\left(\mathds{1}_{\sh_2^{-n} (A_2)\cap B_2}\Big|\Rtt\right)\right)\\
		&\leq \E\left(\mathds{1}_{\sh_1^{-n} (A_1)\cap B_1} \left(\P\left(A_2\right)\P\left( B_2\right)+\varepsilon \right)\right)\\
		&\leq \left(\P\left(A_1\right)\P\left(B_1 \right)+\varepsilon\right) \left(\P\left(A_2\right)\P\left( B_2\right)+\varepsilon \right)\\
		&\leq 3\varepsilon + \P\left(A\right)\P\left(B \right)
	\end{align*}
	The first inequality follows from the strong mixing property of the Brownian half-plane and the second inequality comes from the strong mixing property  of the bi-infinite continuous Brownian tree\footnote{This comes from the invariance under rerooting together with the Markov property of the Brownian motion.}. The converse inequality (with $-\epsilon$ instead of $\epsilon$) follows along the same lines; with it we conclude.
\end{proof}

We can now prove Proposition \ref{prop1}.
\begin{proof}[Proof of \Cref{prop1}]
	We start  by proving the subadditivity of $g_n(\Rtt,\HHH) = \dd_{\Rmm}(\Phi^{\Rmm}(0),\Phi^{\Rmm}(n))$ for the shift $\sh$.
	\begin{align*}
		\dd_{\Rmm}(\Phi^{\Rmm}(\kappa_0),\Phi^{\Rmm}(\kappa_{n+m})) &\leq \dd_{\Rmm}(0,\Phi^{\Rmm}(n))+\dd_{\Rmm}(\Phi^{\Rmm}(n),\Phi^{\Rmm}(n+m)) \\
		&\leq \dd_{\Rmm}(0,\Phi^{\Rmm}(n))+\dd_{\Rmm}(\Phi^{\Rmm}(\sh^n(0)),\Phi^{\Rmm}(\sh^n(m)))\\
		&\leq g_n(\Rtt,\HHH) + g_m(\sh^n(\Rtt,\HHH)).
	\end{align*}
	This together with \Cref{lem1} applied to the Kingsman's subadditive ergodic theorem which gives the result. The consequence that $c\in [0,1]$ follows since $\dd_{\Rmm}(\Phi^{\Rmm}(0),\Phi^{\Rmm}(n))\leq n$ after the gluing.
\end{proof}
In order to stablish that $\dd_{\Rmm}(\kappa_0,\kappa_n) = o(n)$, we prove that $c$ appearing in \Cref{prop1} is equal to 0.
\begin{prop}\label{prop2}
	The constant $c$ in \Cref{prop1} is equal to 0.	
\end{prop}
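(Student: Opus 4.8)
The plan is to show $c=0$ by exhibiting a specific positive-probability scenario in which $\dd_{\Rmm}(\kappa_0,\kappa_n)$ stays much smaller than $n$, and then upgrading this to an almost-sure statement using the ergodic/rerooting structure already established. The key observation is that the gluing identifies boundary points of $\HHH$ that lie in the same fibre of the contour function $\Gtt$ of the bi-infinite CRT; in particular, the images $\kappa_0,\dots,\kappa_n$ of the successive spine roots are all at $\dd_\Rtt$-distance $\asymp 1$ from one another \emph{in the tree}, but after gluing they can be connected cheaply through the half-plane whenever the corresponding boundary excursions of $Z$ are short. First I would record the scaling/distributional identities: by \Cref{prop1} together with the fact (stressed at the end of the introduction) that $\dd_{\Rmm}(\kappa_0,\kappa_n)/n$ is equal in law to $\dd_{\Rmm}(\kappa_0,\kappa_1)$ after appropriate rerooting — this is the consequence of $\sh$ being measure preserving (\Cref{lem1}) combined with the self-similarity of the Brownian objects — so it suffices to estimate the \emph{typical} value of a single-step distance under diffusive rescaling and show it can be made $o(1)$.

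The main step is a coupling/upper-bound argument for $\dd_{\Rmm}(\kappa_0,\kappa_n)$. Here is the mechanism I would use. Between the spine points $\tau_0$ and $\tau_{-n}$, the process $Z$ (which drives $\Rtt_{-\infty}^\infty$) performs $n$ excursions above its running infimum; the two endpoints of each such excursion, read off on the boundary curve $\Gb$ of $\HHH$, are glued to a common point of $\Rtt$. So a path from $\kappa_0$ to $\kappa_n$ in $\Rmm$ can be built by, at each spine increment, hopping along the boundary of $\HHH$ across one excursion-interval of $Z$ and paying the corresponding $\dd_{\HHH}$-cost, which — since $\HHH$ is the scaling limit of the $\UIHPQ$ and boundary-length scales like (distance)$^2$ — is of order $\sqrt{(\text{boundary length of the excursion})}$. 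Summing, $\dd_{\Rmm}(\kappa_0,\kappa_n)$ is dominated by $\sum_{i=1}^n \sqrt{L_i}$ where $L_i$ are the successive excursion lengths of $Z$ away from its infimum; these are i.i.d.-type heavy-tailed variables (tail $\asymp x^{-1/2}$, the continuum analogue of the overshoot estimate \eqref{overshoot:proba}), whose square roots have tail $\asymp x^{-1}$. The key point is that a single increment, after dividing the ambient metric by $n^{-1/2}$ (the natural scaling making $n$ spine steps correspond to a CRT of size $\asymp 1$), contributes $n^{-1/2}\sqrt{L_i}$, and the constant $c = \lim n^{-1}\dd_{\Rmm}(\kappa_0,\kappa_n)$ from \Cref{prop1} must therefore also equal a \emph{rescaled} limit that one checks is $0$: more carefully, I would fix a large threshold $A$, split each excursion into ``short'' ($L_i\le A$) and ``long'' contributions, bound the short ones by a direct sum and the long ones by their (small) density, and let $A\to\infty$ after $n\to\infty$.

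A cleaner route, and the one I would actually write, is to bypass explicit excursion bookkeeping and instead argue by contradiction using rerooting. Suppose $c>0$. By \Cref{prop1} the distance $\dd_{\Rmm}(\kappa_0,\kappa_n)$ grows linearly; but by the invariance-under-rerooting of the pair $(\Rtt,\HHH)$ (used in the proof of \Cref{lem1}) one has $\dd_{\Rmm}(\kappa_0,\kappa_n) \eqd \dd_{\Rmm}(\kappa_0,\kappa_1)$ up to the relevant scaling — i.e. the law of the normalized one-step distance is independent of the starting spine point — so linear growth with slope $c$ forces each increment $\dd_{\Rmm}(\kappa_{i-1},\kappa_i)$ to be bounded below (in probability) by a positive constant. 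I would then show this is impossible by producing, with positive probability, a macroscopic stretch of the spine along which consecutive $\kappa_i$ are \emph{forced} to be close: e.g. condition on an event (positive probability by the Markov property of $\HHH$, \Cref{prop:markovBHP}, and non-degeneracy of $Z$) on which many consecutive excursions of $Z$ are tiny and the filled-in balls of $\HHH$ near the corresponding boundary segments are small, so that $\kappa_0,\dots,\kappa_m$ all lie within distance $\varepsilon$ of each other. On this event $\dd_{\Rmm}(\kappa_0,\kappa_m)\le \varepsilon$ while $m$ is macroscopic, contradicting the linear lower bound when $\varepsilon$ is small; by ergodicity (strong mixing of $\sh$, \Cref{lem1}) this positive-probability behaviour propagates to force $c=0$ almost surely.

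The main obstacle, I expect, is making the ``cheap path through $\HHH$'' estimate rigorous: one needs that crossing a boundary excursion of length $\ell$ in the Brownian half-plane costs $O(\sqrt\ell)$ in the glued metric, uniformly enough to sum over $n$ excursions, and that the short excursions genuinely dominate. This requires quantitative control on distances near the boundary of $\HHH$ (in the spirit of the boundary-length/distance scaling and the Markov property), together with a law-of-large-numbers or truncation argument for the heavy-tailed sum $\sum\sqrt{L_i}$ — and care that the heavy tail (square-root tail of $L_i$, so $x^{-1}$ tail of $\sqrt{L_i}$) does not blow up the sum. Getting the interchange of limits ($n\to\infty$ then threshold $\to\infty$) to land on $c=0$ rather than some positive constant is the delicate point; the rerooting/contradiction packaging is what makes it go through without an exact computation.
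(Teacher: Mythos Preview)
You have all the necessary ingredients --- the scaling identity $\dd_{\Rmm}(\kappa_0,\kappa_n)/n \eqd \dd_{\Rmm}(\kappa_0,\kappa_1)$, the positive-probability small-distance event, and the fact from \Cref{prop1} that $c$ is a deterministic constant --- but you do not put them together in the direct way the paper does, and your first approach actually fails.

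The paper's argument is a two-line finish from the scaling identity (their Lemma~2). Since $\dd_{\Rmm}(\kappa_0,\kappa_n)/n\to c$ a.s.\ and $\dd_{\Rmm}(\kappa_0,\kappa_n)/n\eqd \dd_{\Rmm}(\kappa_0,\kappa_1)$ for every $n$, one gets $c\eqd \dd_{\Rmm}(\kappa_0,\kappa_1)$. Now $c$ is deterministic, while $\dd_{\Rmm}(\kappa_0,\kappa_1)\le \dd_{\HHH}(\Gh(0),\Gh(\tau_{-1}))$ (gluing only shortens distances), and the right-hand side is $\le\varepsilon$ with positive probability for every $\varepsilon>0$ simply because $\tau_{-1}$ is independent of $\HHH$ and can be arbitrarily small. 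Hence $\P(c\le\varepsilon)>0$, so $c\le\varepsilon$; letting $\varepsilon\downarrow 0$ gives $c=0$. No excursion bookkeeping, no contradiction argument, no ergodicity beyond what is already in \Cref{prop1}.

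Your first route --- bounding $\dd_{\Rmm}(\kappa_0,\kappa_n)$ by $\sum_{i=1}^n \sqrt{L_i}$ with $L_i$ the excursion lengths --- does \emph{not} yield $o(n)$. The $L_i$ have tail $\asymp t^{-1/2}$, so $\sqrt{L_i}$ has tail $\asymp x^{-1}$ and infinite mean; the partial sums are of order $n\log n$, not $o(n)$. (And this is before worrying about the random prefactors coming from the $\HHH$-crossings.) The ``delicate point'' you flag is not merely delicate, it is fatal for this route.

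Your second route (contradiction) is correct in spirit but circuitous. Once you combine the scaling identity with \Cref{prop1} you already have $\dd_{\Rmm}(\kappa_0,\kappa_1)=c$ almost surely, not just a lower bound ``in probability''; so a \emph{single} positive-probability event $\{\dd_{\Rmm}(\kappa_0,\kappa_1)\le\varepsilon\}$ immediately gives $c\le\varepsilon$. There is no need to manufacture a macroscopic stretch of small increments, condition on filled-in balls, or invoke strong mixing again.
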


The proposition is proven using the following lemma.
\begin{lemma}\label{lem2}
	For every point $u\in \R^+$ one has
	\[
	\frac{\dd_{\Rmm}(\kappa_0,\kappa_u)}{u} \eqd \dd_{\Rmm}(\kappa_0,\kappa_1)
	\]
\end{lemma}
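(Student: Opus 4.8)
The key point is that the bi-infinite shocked map has a scaling self-similarity coming from the Brownian scaling of its two ingredients. The plan is to exploit the fact that a Brownian half-plane $\HHH$ scaled by a factor $\lambda$ is again (in law) a Brownian half-plane, and likewise that the bi-infinite CRT $\Rtt_{-\infty}^\infty$, built from a two-sided Brownian-type process $Z$, has the scaling property $\big(\lambda^{-1}Z(\lambda^{2}t):t\in\R\big)\eqd\big(Z(t):t\in\R\big)$, which translates into $\lambda\cdot\Rtt_{-\infty}^\infty\eqd\Rtt_{-\infty}^\infty$ where the rescaling of the tree corresponds to multiplying boundary-length parameters by $\lambda^{2}$ and distances by $\lambda$. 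First I would make precise the statement that if one simultaneously rescales the boundary length coordinate of $\HHH$ by $\lambda^{2}$ and the metric of $\HHH$ by $\lambda$, and rescales the bi-infinite CRT in the matching way, then the gluing of Definition \ref{d.smbinf} commutes with this operation: the identification relation $\Gtt\circ(\Gb)^{-1}(x)=\Gtt\circ(\Gb)^{-1}(y)$ is invariant under reparametrising both boundaries by the same factor, so the resulting shocked map is just $\Rmm$ with its metric multiplied by $\lambda$, and the marked points $\kappa_{0},\kappa_{u}$ get sent to $\kappa_{0},\kappa_{\lambda^{2}u}$ (since $\tau_{-u}$ scales to $\tau_{-\lambda^{2}u}$ under $t\mapsto\lambda^{2}t$, $Z\mapsto\lambda Z$).

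With this in hand, taking $\lambda^{2}=u$ (i.e. $\lambda=\sqrt u$) gives the distributional identity
\[
\dd_{\Rmm}(\kappa_{0},\kappa_{u})\ \eqd\ \sqrt{u}\,\dd_{\Rmm}(\kappa_{0},\kappa_{1}).
\]
Wait — this does not match the statement, which claims $\dd_{\Rmm}(\kappa_0,\kappa_u)/u\eqd\dd_{\Rmm}(\kappa_0,\kappa_1)$. The correct normalisation must therefore come not from Brownian scaling but from the \emph{additive/stationary} structure along the spine: the point $\kappa_{u}$ is reached from $\kappa_{0}$ by a concatenation of $u$ copies of the basic building block, and the shift $\sh$ of Lemma \ref{lem1} is measure preserving, so $\dd_{\Rmm}(\kappa_{j},\kappa_{j+1})$ are identically distributed. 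The genuine content of the lemma is then: using rerooting invariance (Lemma \ref{lem1}) one has $\dd_{\Rmm}(\kappa_0,\kappa_u)\eqd\dd_{\Rmm}(\kappa_{-u},\kappa_0)$ and, by the scaling described above with $\lambda^{2}=1/u$ composed with the shift that re-centres at $\kappa_{-u}$, one identifies $\dd_{\Rmm}(\kappa_{-u},\kappa_0)/u$ with $\dd_{\Rmm}(\kappa_0,\kappa_1)$. So the proof is: (i) apply the scaling map with factor $\lambda=1/\sqrt u$ to send the segment of spine between $0$ and $u$ to a segment between $0$ and $1$, noting that $\dd_{\Rmm}$ rescales by $1/\sqrt u$, \emph{not} by $1/u$; (ii) reconcile by observing that what we actually want is not a pure scaling identity but the combination of scaling \emph{and} the ergodic stationarity, so that the quantity $\dd_{\Rmm}(\kappa_0,\kappa_u)/u$ has the same law as $\dd_{\Rmm}(\kappa_0,\kappa_1)$ by writing $u$ as a product and using that along each unit block the increments are i.i.d.\ in law; (iii) conclude.

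The main obstacle I anticipate is handling the \textbf{interaction between the truncation/metric of the glued space and the rescaling}: one must check carefully that the infimum defining $\dd^{SM}$ (over chains $t_0,s_1,t_1,\dots$ alternating along the two boundary parametrisations) transforms correctly — each term $\dd_D(x_i^b,y_i^b)$ in the Brownian-disk/half-plane metric rescales by $\lambda$, each reparametrisation of $\Gb$ and $\Gtt$ rescales the time coordinate, and the matching condition $\Gtt(t_i)=\Gtt(s_{i+1})$ is scale-invariant, so the whole infimum rescales by $\lambda$. The other delicate point is ensuring that the marked spine point $\tau_{-u}$ truly maps to $\tau_{-1}$ under the joint rescaling, which requires the explicit scaling relation for the two-sided process $Z$ defining $\Rtt_{-\infty}^\infty$ in Definition \ref{def:BICRT} (both the unconditioned Brownian motion and the Bessel-3 process are Brownian-scale invariant, with Bessel-3 scaling being the standard $\lambda^{-1}R(\lambda^2\cdot)\eqd R(\cdot)$). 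Once these scaling bookkeeping steps are in place, the distributional identity follows, and feeding it into Proposition \ref{prop1} (where $\dd_{\Rmm}(\kappa_0,\kappa_n)/n\to c$ a.s.) forces $c=\E[\dd_{\Rmm}(\kappa_0,\kappa_1)]$ to be consistent with $\dd_{\Rmm}(\kappa_0,\kappa_u)/u\eqd\dd_{\Rmm}(\kappa_0,\kappa_1)$ for all $u$, which (together with the positive-probability event analysis promised for Proposition \ref{prop2}) pins $c=0$.
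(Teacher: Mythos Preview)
Your initial instinct --- use the joint scale-invariance of $\HHH$ and $\Rtt_{-\infty}^\infty$ --- is precisely the paper's approach, and it is enough on its own. The error that derails you is in the bookkeeping of the rescaling. The parameter $u$ in $\kappa_u$ is a \emph{tree height}: it is the value of $Z$ at the stopping time $\tau_{-u}$, not a boundary-length coordinate. Under the Brownian rescaling $Z'(s)=\lambda^{-1}Z(\lambda^{2}s)$ one has $\tau^{Z'}_{-v}=\lambda^{-2}\tau^{Z}_{-\lambda v}$, so the spine point at height $v$ for $Z'$ corresponds to the spine point at height $\lambda v$ for $Z$. Heights therefore scale like the metric (by $\lambda$), not like the boundary time (by $\lambda^{2}$); your claim that $\kappa_u$ is sent to $\kappa_{\lambda^2 u}$ is off by one power of $\lambda$. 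With this correction, applying the joint scaling with $\lambda=u$ gives directly
\[
\dd_{\Rmm}(\kappa_0,\kappa_u)\ \eqd\ u\,\dd_{\Rmm}(\kappa_0,\kappa_1),
\]
which is the lemma. This is exactly how the paper proceeds: it writes the quotient metric as an infimum over chains and uses $H(s,t)\eqd H(\ell^{2}s,\ell^{2}t)/\ell$ for the half-plane together with $\dd_{Z}(s,t)\eqd\dd_{Z}(\ell^{2}s,\ell^{2}t)/\ell$ for the tree, with $\ell=u$, so that the constraint set $J(K,u)$ reparametrises to $J(K,1)$.

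Your attempted rescue via ``additive/stationary structure'' is both unnecessary and would not work: the increments $\dd_{\Rmm}(\kappa_{j},\kappa_{j+1})$ are stationary under the shift $\sh$ but certainly not independent, and stationarity alone cannot produce the exact distributional identity $\dd_{\Rmm}(\kappa_0,\kappa_u)/u\eqd\dd_{\Rmm}(\kappa_0,\kappa_1)$ for arbitrary real $u>0$. The correct proof was your first paragraph, with the scaling factor fixed.
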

\begin{proof}
	Recall that for $B_t$ Brownian motion and $\ell\in \R^+$ one has that $B_t\eqd B_{\ell^2t}/\ell$ and $\tau_1\eqd \tau_\ell/\ell^2$. We denote by $(H(s,t):\; s,t\in \R)$ the process where $H(s,t) = \dd_\HHH(\Gh(s),\Gh(t))$ for $s,t\in \partial \HHH$;  which has the property that $H(s,t)$ is equal in law to $H(\ell^2 s,\ell^2 t)/\ell$ (this follows from the renormalization applied to the UIHPQ$_S$ to obtain the Brownian half-plane in Theorem 1.12 \cite{GM3}). Also recall the definition of the pseudometric associated to $\Rtt$ and notice that since the contour process of $\Rtt$ in the positive spine is associated to $Z = (Z(t) : t\in\R)$ a standard Brownian motion (\Cref{def:BICRT}), then $\dd_Z(s,t) = \dd_Z(\ell^2 s, \ell^2 t)/\ell$ and we write $s \sim_Z t $ if $\dd_Z(s,t)= 0$.  
	
	We define the set $J(K,u)$ as the set of all sequences $t = (t_i \in \R: i\in \{0,1,\dots,K\})$ of length $K$ such that $t_i\sim_Z t_{i+1}$ and such that $t_0=0$ and $t_K\sim_Z \tau_{-u}$.
	
	We have the following equalities
	\begin{align*}
		\inf_{K\in \N} \inf_{t \in J(K,u)}\Big\{\sum_{i=0}^K \frac{H(t_i,t_{i+1})}{u}\Big\}
		&\eqd \inf_{K\in \N} \inf_{t \in J(K,u)}\Big\{\sum_{i=0}^K H\left(\frac{t_i}{u^2},\frac{t_{i+1}}{u^2}\right)\Big\} \\
		&\eqd\inf_{K\in \N} \inf_{s \in J(K,1)}\Big\{\sum_{i=0}^K H\left(s_i,s_{i+1}\right)\Big\},
	\end{align*} 
	from where we conclude.
\end{proof}

We can now prove the proposition.
\begin{proof}[Proof of \Cref{prop2}]
	From \Cref{prop1} and \Cref{lem2} one has that $c$ is equal in law to $\dd_{\Rmm}(\kappa_0,\kappa_1)$ and since the gluing operation decrease the distances one has that $c\leq \dd_{\HHH}(\kappa^\HHH_0,\kappa^\HHH_1)$, where $\kappa^\HHH_y = \Gh(\tau_{-y})$ and $\tau_y$ is defined in \eqref{tau}. Recalling that $\tau_{y}$ are independent of $\HHH$, we get that for any $\varepsilon>0$, exists $\delta= \delta(\varepsilon)>0$ such that
	\[
	\P(c\leq \varepsilon)\geq \P(\dd_{\HHH}(\Gtt(0),\Gtt(1))\leq \varepsilon) \geq \delta
	\]
	but since $c$ is constant a.s. we conclude that $c=0$.
\end{proof}

We now note that by mixing Lemma \ref{lem2} and Proposition \ref{prop2}, together with noting that the curve $\Gtt$ is continuous in $\Rmm$, we obtain the following corollary.
\begin{cor}\label{cor2}
	For every $y\in \R$ one has a.s.
	\[
	\dd_{\Rmm}(\kappa_0,\kappa_y) = 0
	\]
\end{cor}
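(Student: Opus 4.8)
The plan is to obtain \Cref{cor2} as a short combination of \Cref{prop1}, \Cref{prop2} and \Cref{lem2}, with the only additional ingredient being the continuity of the contour exploration viewed as a curve into $\Rmm$.

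First I would use the identity already recorded in the proof of \Cref{prop2}, namely $c\eqd\dd_{\Rmm}(\kappa_0,\kappa_1)$, together with \Cref{prop2} itself, which gives $c=0$. Since $c$ is a deterministic constant this forces $\dd_{\Rmm}(\kappa_0,\kappa_1)=0$ almost surely. Feeding this into \Cref{lem2}, for every fixed $u\in\R^+$ one gets
\[
\frac{\dd_{\Rmm}(\kappa_0,\kappa_u)}{u}\eqd \dd_{\Rmm}(\kappa_0,\kappa_1)=0,
\]
so that $\dd_{\Rmm}(\kappa_0,\kappa_u)=0$ almost surely for each individual level $u$.

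To upgrade this to a statement valid almost surely simultaneously in $y$, I would intersect the countably many null events indexed by $u\in\Q^+$, obtaining a full-probability event on which $\dd_{\Rmm}(\kappa_0,\kappa_q)=0$ for all rational $q>0$. On this event, given an arbitrary $y$ I would pick rationals $q_k\to y$: since the level-indexed first-passage times of the Brownian motion coding $\Rtt$ depend right-continuously on the level and the contour exploration $\Phi^{\Rmm}\circ\Gtt$ is continuous as a curve into $\Rmm$, one has $\kappa_{q_k}\to\kappa_y$ in $\Rmm$, whence $\dd_{\Rmm}(\kappa_0,\kappa_y)=\lim_{k}\dd_{\Rmm}(\kappa_0,\kappa_{q_k})=0$. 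If one also wants to move the base point, the measure-preserving shift $\sh$ of \Cref{lem1} (rerooting along the spine, in either direction) gives $\dd_{\Rmm}(\kappa_m,\kappa_{m+u})\eqd\dd_{\Rmm}(\kappa_0,\kappa_u)$ for all $m$, and the triangle inequality propagates the vanishing of the distance to the whole spine.

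The argument has essentially no obstacle: all of the substance is already packaged in \Cref{prop1}, \Cref{prop2} and \Cref{lem2}, and \Cref{cor2} is a matter of bookkeeping. The one point deserving an explicit line is the exchange of ``for each $y$'' with ``for all $y$ at once''; this rests on (i) the gluing being a metric quotient of a continuous object, so that $\Phi^{\Rmm}\circ\Gtt$ is continuous, and (ii) right-continuity of $y\mapsto\tau_{-y}$, a standard property of Brownian first-passage times — both immediate.
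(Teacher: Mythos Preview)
Your proposal is correct and follows essentially the same route as the paper: combine \Cref{lem2} with \Cref{prop2} (via the identity $c\eqd\dd_{\Rmm}(\kappa_0,\kappa_1)$) to get the vanishing at each fixed level, and then invoke continuity of the contour curve in $\Rmm$ to pass from a countable set of levels to all of them. The only addition you make beyond the paper's one-line justification is spelling out the right-continuity of $y\mapsto\tau_{-y}$ and the rerooting to move the base point, both of which are harmless elaborations rather than a different strategy.
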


Now, we generalise \Cref{cor2} to show that any two points in the decoration have distance equal to 0.
\begin{prop}\label{prop3}
	Almost surely, for any $s,t \in \R$
	\[
	\dd_{\Rmm}(\Gm(s),\Gm(t)) = 0
	\]
\end{prop}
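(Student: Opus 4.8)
The plan is to deduce Proposition \ref{prop3} from Corollary \ref{cor2} by combining two facts: (1) the contour curve $\Gtt$ of the bi-infinite CRT is a.s.\ continuous as a map into $\Rmm$, and (2) every point in $\Rtt_{-\infty}^\infty$ lies on a geodesic (in the tree) between the root of some spine-tree $\kappa_y$ and a descendant, so that any two points of the decoration can be linked by a path in $\Rmm$ built out of pieces whose endpoints are spine roots $\kappa_y$, all of which collapse to a single point by Corollary \ref{cor2}.

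More precisely, first I would fix $s,t\in\R$ and consider the images $\Gm(s)=\Gr(s)$ and $\Gm(t)=\Gr(t)$ in $\Rmm$. In the bi-infinite CRT, the point $\Gtt(s)$ sits in one of the trees hung off the spine, say the tree rooted at the spine vertex $\tau_{-y}$ for some $y=y(s)\in\R$; by definition of the contour exploration, $\Gtt$ restricted to the time-interval coding that subtree is a closed loop based at $\Gtt(\tau_{-y})$, and $\Gtt(s)$ is visited by a sub-path of $\Gtt$ that starts and ends at $\Gtt(\tau_{-y})$. Pushing forward by the identification map, we get a path in $\Rmm$ from $\kappa_y$ to $\Gr(s)$ and back; in particular $\dd_{\Rmm}(\Gr(s),\kappa_y)$ is finite, being bounded by the length of (half of) that image path. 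The key reduction is therefore: it suffices to show $\dd_{\Rmm}(\Gr(s),\kappa_{y(s)})=0$ for all $s$, since then by Corollary \ref{cor2} and the triangle inequality $\dd_{\Rmm}(\Gr(s),\Gr(t))\le \dd_{\Rmm}(\Gr(s),\kappa_{y(s)})+\dd_{\Rmm}(\kappa_{y(s)},\kappa_{y(t)})+\dd_{\Rmm}(\kappa_{y(t)},\Gr(t))=0$.

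To establish $\dd_{\Rmm}(\Gr(s),\kappa_{y(s)})=0$, I would use continuity of $\Gtt$ in $\Rmm$ together with a rerooting/shift argument. The point $\Gtt(s)$ is approached along the spine direction: inside the subtree rooted at $\tau_{-y}$, $\Gtt(s)$ lies at tree-distance $h$ from $\tau_{-y}$, and one can find times $u_n\to s$ with $\Gtt(u_n)$ on the spine of the ambient bi-infinite tree after the appropriate shift — but cleaner is to invoke the scaling Lemma \ref{lem2} applied after re-rooting the bi-infinite CRT at $\Gtt(\tau_{-y})$ (using invariance under rerooting of $\HHH$ and of the tree, which are already exploited in Lemma \ref{lem1}), which shows $\dd_{\Rmm}(\kappa_y,\Gr(s))\eqd u\cdot \dd_{\Rmm}(\kappa_0,\kappa_1)$ for $u$ the tree-distance from $\tau_{-y}$ to $\Gtt(s)$; by Proposition \ref{prop2} this is $0$ a.s.\ for each fixed $s$. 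Then a.s.\ along a countable dense set of $s$ the distance vanishes, and continuity of $\Gr$ extends it to all $s\in\R$ simultaneously.

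The main obstacle I expect is the continuity of $\Gm=\Gr$ as a map into $\Rmm$ — i.e.\ justifying that the identification does not tear the curve — and more subtly, making rigorous the claim that an arbitrary point of the bi-infinite CRT is ``scaling-equivalent'' to a spine root in the sense needed for Lemma \ref{lem2}; the honest version of that claim requires noting that $\Gtt(s)$, as a point of the CRT, is the image of a subtree root by a further re-rooting, and that the glued metric space is re-rooting invariant in the relevant sense. Once continuity of $\Gr$ in $\Rmm$ is granted (it is built into the construction of the shocked map, since the identification is by a continuous equivalence relation on the compact-on-truncations curve), the countable-dense-set-plus-continuity argument closes the proposition.
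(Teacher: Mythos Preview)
Your approach is essentially the paper's: reduce by continuity of $\Gr$ to a countable dense set of times, then for each fixed time use re-rooting invariance to place the point in question at the root of a new spine and invoke Corollary~\ref{cor2}.

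One mis-step, which you yourself flag: re-rooting at the spine ancestor $\Gtt(\tau_{-y(s)})$ and then invoking Lemma~\ref{lem2} does not work, because after that re-rooting $\Gtt(s)$ still sits in an attached subtree and is \emph{not} a spine point $\kappa'_u$, so Lemma~\ref{lem2} (which compares two spine points) does not yield the claimed equality in law. The paper's fix --- which is exactly the ``further re-rooting'' you anticipate in your obstacle paragraph --- is to re-root directly at $\Gtt(r)$ for fixed $r\in\Q$; then the unique ray $\mathfrak p(r\to+\infty)$ becomes the new spine, your point $\kappa_{y(r)}$ (or more generally the meeting point $\Gtt(u)$ of the two rays to infinity) lies on it, and Corollary~\ref{cor2} applies verbatim. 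With that single change in the choice of re-rooting point your argument coincides with the paper's.
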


\begin{proof}
	Since the the curve $\Gr$ is continuous, it is enough to show that for any $q,r \in \Q$  almost surely  $\dd_{\Rmm}(\Gtt(q),\Gtt(r))=0$. To do that, let us define $\mathfrak{p}(q\rightarrow +\infty)$ the unique simple path starting from $\Gtt(q)$ that goes to $+\infty$ in $\Rtt$. Note that $\mathfrak{p}(q\rightarrow +\infty) \cap \mathfrak{p}(r\rightarrow +\infty) $ is non-empty, and take $u \in \R$ such that $\Gtt(u)$ is the smallest element in that intersection (in the order given by the path $\mathfrak{p}(q\rightarrow +\infty)$). It is enough to show that $d_{\Rmm}(\Gtt(r),\Gtt(u))=0$. To do that, we use the invariance of the distribution under re-rooting Lemma \ref{lem1}, and we  re-root $\Rmm$ at the point $\Gtt(r)$ and we call this rerooting $\Rmm'$. It is clear now that $\kappa'_0=\Gtt(r)$ and that there exists $u'$ such that $\kappa_{u'}=\Gtt(u)$. We conclude from Corollary \ref{cor2}. 
\end{proof}
We now conclude with the proof of Theorem \ref{t.main_section_3}.
\begin{proof}[Proof of Theorem \ref{t.main_section_3}]
	We note that the distance of the glueing between $(\Rmm_{\infty}, \HHH_\infty)$ is bigger than $(\Rmm_{-\infty}^\infty, \HHH_\infty)$, as one can obtain the first glueing, by first doing the second glueing and the identifying the points $\kappa_{y} $ with $\kappa_{-y}$ for all $y\in \R^+$. We conclude by Proposition \ref{prop3}.
\end{proof}
\section{Infinite discrete volume} \label{s.id}

\subsection{Local limit of the tree-decorated quadrangulation}\label{S.ll}
The objective of this section is to understand what a tree-decorated quadrangulation looks like when both the map and the tree are big. This will be done by obtaining a local limit of the map looked from its root, i.e, the root of the tree.  Let $\qt{f}{m} = (\rqq,\Gr)$ be a pair uniformly chosen in $Q_{f}^{\textsf{T},m}$ the set of pairs with first coordinate a quadrangulation with $f$ faces and second coordinate describing the contour of a tree with $m$ edges which is a submap containing the root edge of the first coordinate.  Even tough our results are expressed by means of a quadrangulation decorated by a curve, we will indistinctly use that they are decorated by a tree, since they are in bijection.

\begin{prop}\label{proploc} As $f\rightarrow \infty$, $\qt{f}{m} $ converges in distribution (for the Benjamini-Schramm Uniform topology) to a limit we call $\qt{\infty}{m}$. Furthermore, as $m\to \infty$ we have that $\qt{\infty}{m}$ converges in distribution (for the Benjamini-Schramm Uniform topology) towards a limit $\qt{\infty}{\infty}$ that we call the infinite tree-decorated quadrangulation (ITQ). In brief,
	\[
	\qt{f}{m}\xrightarrow[local\;(f\rightarrow \infty)]{(d)} \qt{\infty}{m} \xrightarrow[local\;(m\rightarrow \infty)]{(d)} \qt{\infty}{\infty}. 
	\]
\end{prop}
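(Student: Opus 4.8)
The plan is to deduce both limits from the bijection of \cite{FS19} recalled in Section \ref{ss.tdm}, together with the known local limits of its two factors and the continuity of the gluing operation for the Benjamini--Schramm uniform topology. Under the bijection $\qt{f}{m}$ corresponds to an independent pair $(\rtt_m,\qq^b_{f,m})$, where $\rtt_m$ is a uniform plane tree with $m$ edges and $\qq^b_{f,m}$ is a uniform quadrangulation with a simple boundary of size $2m$ and $f$ internal faces, and $\qt{f}{m}$ is obtained by gluing the boundary of $\qq^b_{f,m}$ according to the contour equivalence relation of $\rtt_m$, rooted at the class of boundary vertex $0$; the decoration $\Gr$ is the contour of $\rtt_m$ seen inside the glued map. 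The proof will simultaneously show existence of the limits and identify them: $\qt{\infty}{m}$ is the gluing of $\qq^b_m$ with $\rtt_m$, and $\qt{\infty}{\infty}$ (the ITQ) is the gluing of the $\UIHPQ$ with the infinite critical geometric tree $\rtt_\infty$.

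For the first limit ($m$ fixed, $f\to\infty$) the tree factor is unchanged, while $\qq^b_{f,m}\to\qq^b_m$ for the Benjamini--Schramm uniform topology as $f\to\infty$ (Section \ref{ss.UIHPQ}, \cite{CM12,CC18}), the boundary curve visiting only $2m$ vertices and converging trivially. Here the gluing is manifestly local: the boundary of $\qq^b_{f,m}$ is a simple cycle of length $2m$, so all $2m$ boundary vertices --- hence all classes created by $\rtt_m$, including the root --- lie within graph-distance $2m$ of each other. Unwinding the construction, a path of length $\le r$ in $\qt{f}{m}$ lifts to $\qq^b_{f,m}$ as a concatenation of at most $r+1$ paths of length $\le r$ joined at boundary vertices, so $\Tr{\qt{f}{m},\Gr}{r}$ is a (locally constant) function of $\Tr{\qq^b_{f,m},\gb}{\,r+2m+1}$ alone. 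Since the latter stabilises as $f\to\infty$, so does the former, giving $\qt{f}{m}\to\qt{\infty}{m}$.

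For the second limit ($f=\infty$, $m\to\infty$) both factors grow. We have $\rtt_m\to\rtt_\infty$ for the Benjamini--Schramm topology (Theorem \ref{t.ICT}), which also gives convergence of the contour curves for the Benjamini--Schramm uniform topology, a truncated contour being read off from a bounded-radius neighbourhood of the root; and $\qq^b_m\to\UIHPQ$ for the Benjamini--Schramm uniform topology, the boundary curve converging to a parametrisation of $\Z=\partial\UIHPQ$ (Section \ref{ss.UIHPQ}). The two factors being independent, the pairs converge jointly, and after a coupling we may assume the convergence is almost sure, i.e. eventual equality of all truncations. It remains to check that the gluing is local: for every $r$ there are a.s. finite random radii $\rho^\ast,R^\ast$ such that $\Tr{\qt{\infty}{\infty},\Gr}{r}$ is a continuous function of $\Tr{\rtt_\infty,\gtt}{\rho^\ast}$ and $\Tr{\UIHPQ,\gb}{R^\ast}$. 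Two facts drive this: (i) since $\rtt_\infty$ is a.s. one-ended, $C(t):=\dd_{\rtt_\infty}(\gtt(0),\gtt(t))\to+\infty$ as $|t|\to\infty$, so for each $\rho$ the set $\{t:C(t)\le\rho\}$ lies in a bounded (random) window of contour times; and (ii) a bounded window of boundary positions of the $\UIHPQ$ is a finite set of vertices, hence lies in a ball $\Ball{\UIHPQ}{R}$. Now lift a path of length $\le r$ in $\qt{\infty}{\infty}$ to the $\UIHPQ$: at most $r+1$ pieces of graph-length $\le r$, consecutive pieces joined at boundary vertices identified by the tree. The first piece starts at the root and stays in $\Ball{\UIHPQ}{r}$; its endpoint is a boundary vertex in a bounded window, so by (i) the tree vertex realising the next identification has bounded height, hence by (i) again the partner boundary vertex lies in a bounded window of positions, and by (ii) in a bounded $\UIHPQ$-ball. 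As there are at most $r$ identifications, iterating produces a.s. finite $R^\ast$ and $\rho^\ast$ (the contour times in the relevant window meet $\rtt_\infty$ only inside $\rtt_\infty(\rho^\ast)$), and the decoration $\Gr_r$, which follows $\gtt$, is captured after enlarging these radii. The identical tracing shows that as soon as $\Tr{\qq^b_m,\gb}{R^\ast}=\Tr{\UIHPQ,\gb}{R^\ast}$ and $\Tr{\rtt_m,\gtt}{\rho^\ast}=\Tr{\rtt_\infty,\gtt}{\rho^\ast}$ --- which under the coupling holds for all large $m$ --- one has $\Tr{\qt{\infty}{m},\Gr}{r}=\Tr{\qt{\infty}{\infty},\Gr}{r}$, i.e. the claimed convergence.

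The step I expect to demand the most care is exactly this locality of the gluing in the second limit: making rigorous the statement that the finitely many boundary identifications used by a length-$r$ path in the glued map cannot drag the exploration outside a bounded ball of the half-plane, in particular controlling the nested windows produced by the chain of identifications and verifying that the decorating curve is captured by the same finite data. Everything else is a routine assembly of the bijection of \cite{FS19} with the cited local limit theorems for $\rtt_m$ and $\qq^b_{f,m}$.
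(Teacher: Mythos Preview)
Your proposal is correct and follows essentially the same route as the paper: both deduce the proposition from the bijection, the known local limits of the two factors, and the locality (continuity) of the gluing for the Benjamini--Schramm uniform topology. The paper packages the locality step as a single continuity lemma for an extended two-sided gluing map $\overline{\phi}$ and invokes the continuous mapping theorem, whereas you unfold the same content into an explicit path-lifting/iteration argument; the underlying mechanism---finite balls in the glued object are determined by finite balls in the two inputs, thanks to local finiteness and the one-endedness of $\rtt_\infty$---is identical.
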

Let us note that this proposition can be extended to other random objects, for example a half-plane tree decorated quadrangulation (see \cite[Chapter 5]{FR19} for a more general statement).
\subsubsection{Description of the local limit}\label{ss.des_lim}
Let $\rtt_\infty$ be an infinite critical geometric tree as in Theorem \ref{t.ICT}, $\gtt_\infty$ its contour curve and let $\rqq^b_{\infty,\infty}$ be a $\UIHPQ$. Note that the vertices that are in the boundary of $\rqq^b_{\infty,\infty}$ can be identified by $\Z$, in a way that the root edge is $\overrightarrow{0(-1)}$ (as the infinite face lies to the left of this edge). We define the infinite tree-decorated quadrangulation (ITQ) $(\rqq_\infty,\Gr_\infty)$ as the graph obtained by taking the quotient (the boundary of) $\rqq^b_{\infty,\infty}$ with the equivalence relationship given by $\gtt_\infty$, i.e., two vertices of $v_1,v_2\in \rqq^b_{\infty,\infty}$ are equivalent if $v_1, v_2 \in \partial \rqq^b_{\infty,\infty}$ and $d_{\rtt_\infty}(v_1,v_2)=0$. The curve $\Gr_\infty$ is the contour of the image of the boundary curve, which is a copy of $\rtt_\infty$ in $\rqq_\infty$.
\begin{figure}[h!]
	\begin{subfigure}{0.6\textwidth}
		\centering
		\scalebox{0.5}[0.5]{\includegraphics{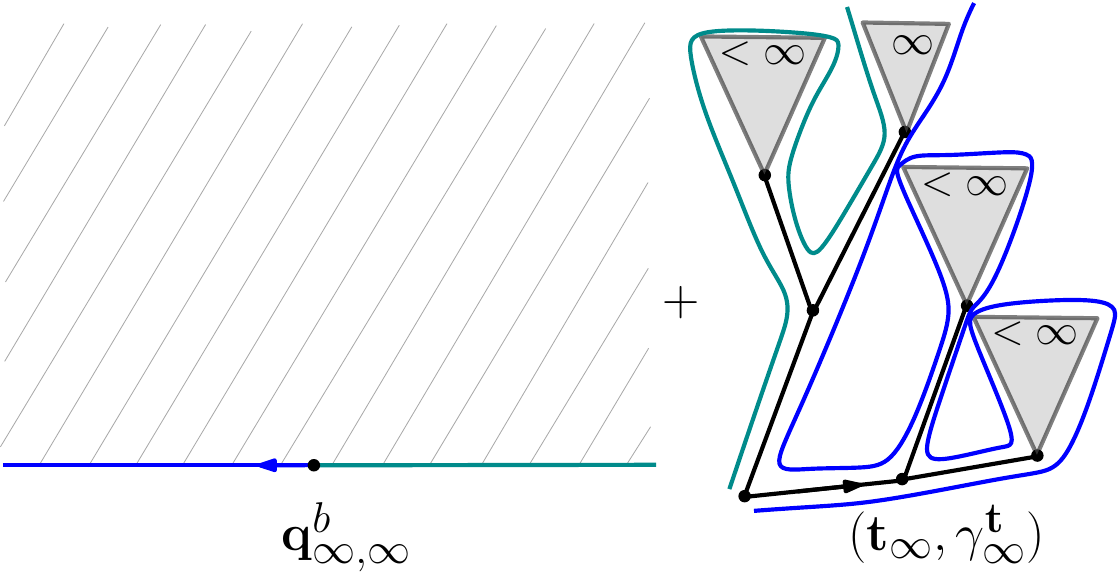} }		
	\end{subfigure}
	\hspace{-0.0\textwidth} $\substack{\text{Gluing}\\\longrightarrow}$ \hspace{-0.0\textwidth}
	\begin{subfigure}{0.3\textwidth}
		\centering
		\scalebox{0.45}[0.45]{\includegraphics{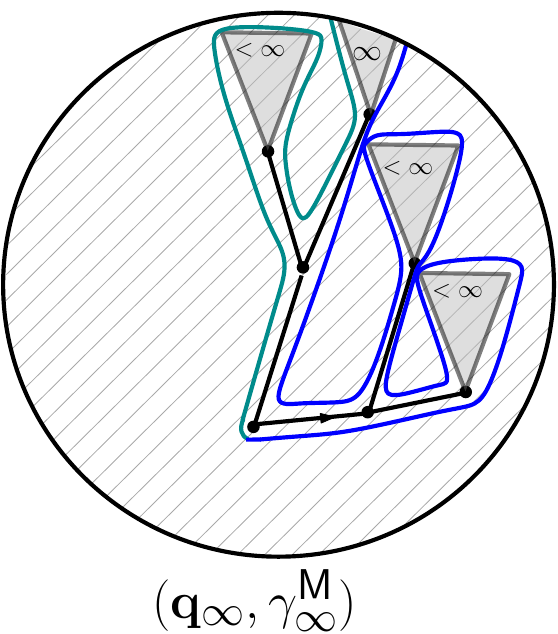}}		
	\end{subfigure}
	\caption{Extended gluing function. We highlight that the right contour in blue and the left contour in cyan are glued to the left and right boundary, respectively. }
\end{figure}

\subsubsection{Proof of the local limit}
Denote the gluing function by 
\[
\phi: \bigcup_{f,m\in \N} Q_{f,m}^{b}\times T_m \to \bigcup_{f,m\in \N} Q_{f}^{\textsf{T},m},
\]
where
\begin{align*}
	Q_{f,m}^{b} &= \text{set of quadrangulations with }f\text{ internal faces and a boundary of length } 2m\\
	T_m &= \text{set of planar trees with }m \text{ edges}
\end{align*}
Let $T_\infty$ be the set of one ended infinite planar trees.

\Cref{proploc} will be a consequence of the following lemma.
\begin{lemma}\label{lem:cont}
	The function $\phi$ admits an extension when $f,m\in \N\cup\{\infty\}$, which is continuous with respect to the product local topology and the Benjamini-Schramm Uniform topology. 
\end{lemma}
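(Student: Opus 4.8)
\textbf{Proof plan for Lemma \ref{lem:cont}.}

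The plan is to prove that the gluing map $\phi$ extends continuously by a ``local-determinism'' argument: I want to show that the $r$-truncation $\Tr{\phi(\qq^b,\tt),\Gr}{r}$ of the glued object depends only on a bounded neighbourhood of the root in $(\qq^b,\tt)$ — more precisely, that there is a (random, but a.s.\ finite) radius $\rho = \rho(r)$ such that $\Tr{\phi(\qq^b,\tt),\Gr}{r}$ is a measurable function of $\Tr{\qq^b,\gb}{\rho}$ and of the first $\rho$ steps of the contour functions of $\tt$ on each side. Once such a $\rho$ exists for the limiting objects (a $\UIHPQ$ with a one-ended infinite tree), continuity follows from the standard sandwich argument: if $(\qq^b_n,\tt_n)\to(\qq^b_\infty,\tt_\infty)$ locally, then for $n$ large the $\rho$-truncations of the inputs agree with those of the limit, hence the $r$-truncations of the outputs agree, which is exactly Benjamini--Schramm Uniform convergence of $\phi(\qq^b_n,\tt_n)$.

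First I would make precise what ``the first $\rho$ steps of the contour'' controls. The gluing identifies a boundary vertex with label $i$ with the boundary vertex with label $j$ whenever $\dd_C(i,j)=0$, i.e.\ whenever they are the same vertex of the tree. A path of length $\le r$ in the glued map from the root can use at most $r$ edges of $\qq^b$; each such edge lies in $\Ball{\qq^b,v_0}{r}$, and its endpoints, if they are boundary vertices, carry labels that are bounded by the number of boundary edges met within distance $r$ of the root in $\qq^b$ — call this bound $B_r$, which is a.s.\ finite for the $\UIHPQ$ because the $\UIHPQ$ is one-ended and locally finite. Thus every vertex identification relevant to $\Tr{\cdot}{r}$ is an identification among the tree-vertices visited in the first $B_r$ contour-steps (on each side of the spine), so it suffices to know the tree up to contour-time $B_r$, i.e.\ $\tt_\infty(B_r)$ in the notation of \Cref{d.subtreee}. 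Setting $\rho = \rho(r) := \max(r, B_r)$ gives the desired finite window, and the dependence of the output truncation on the input truncation is manifestly measurable (it is a finite combinatorial operation: take the labelled boundary segment, quotient by the tree equivalence, keep the ball of radius $r$).

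Then I would run the sandwich. Fix $r$. Given $(\qq^b_n,\tt_n)\to(\qq^b_\infty,\tt_\infty)$ in the product local topology, local convergence of the quadrangulation component gives $\Tr{\qq^b_n,\gb}{\rho}=\Tr{\qq^b_\infty,\gb}{\rho}$ for all $n$ large (here $\rho=\rho(r)$ is computed from the limit $\qq^b_\infty$, which fixes $B_r$), and local convergence of the tree component gives that the first $\rho$ contour-steps of $\tt_n$ on each side agree with those of $\tt_\infty$ for all $n$ large; intersecting the two (both hold eventually) we get that the inputs agree on the window of radius $\rho$, hence by the determinism statement $\Tr{\phi(\qq^b_n,\tt_n),\Gr}{r}=\Tr{\phi(\qq^b_\infty,\tt_\infty),\Gr}{r}$ for all $n$ large. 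Since $r$ was arbitrary this is exactly convergence in the Benjamini--Schramm Uniform topology, so $\phi$ extends continuously, and \Cref{proploc} follows by applying this to the convergences $\qt{f}{m}\to\qt{\infty}{m}$ (as $f\to\infty$, using Theorem \ref{t.BD}-type local convergence of $\qq^b_{f,m}$ to $\qq^b_{\infty,m}$) and then $m\to\infty$ (using Theorem \ref{t.ICT} for the tree and the $\UIHPQ$ limit for the map).

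\textbf{Main obstacle.} The delicate point is the finiteness and ``locality'' of $B_r$: a priori a geodesic of the glued map could exploit an identification of two boundary vertices whose labels are astronomically far apart even though the vertices themselves are close in $\qq^b$ — but that is impossible precisely because in a \emph{simple}-boundary quadrangulation the boundary is not self-intersecting, so two boundary vertices with very different labels are genuinely far in $\qq^b$; the subtlety is rather that after gluing, the tree identifications can bring far-apart boundary labels close together, so one must check that reaching distance $r$ from the root in the \emph{glued} map still only involves boundary labels in $[-B_r,B_r]$. I would argue this by induction on the edges of a length-$\le r$ path: each step either stays among already-revealed vertices or crosses a single edge of $\qq^b$ emanating from a so-far-revealed vertex, and a revealed boundary vertex at glued-distance $\le k$ has a label controlled by the number of boundary edges within $\qq^b$-distance $k+$ (tree-height reach) of $v_0$, which is finite and locally determined. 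Making the bookkeeping of this ``revealed region'' airtight — in particular handling both the left and right contour of the tree simultaneously, as in the figure — is where the real work lies; everything after that is the routine sandwich.
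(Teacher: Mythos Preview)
Your overall strategy --- local determinism plus the sandwich --- is exactly the paper's, and the paper is in fact no more detailed: it simply asserts that by local finiteness the output $R$-ball is determined by finite balls $\Ball{\qq}{R'}$ and $\Ball{\tt}{R''}$, then runs the sandwich. One point the paper makes explicit, which you only allude to, is that the extension $\overline\phi$ must glue from \emph{both} sides of the root simultaneously: a one-sided contour walk on a one-ended infinite tree never crosses the spine, so the naive one-sided gluing is not the right object. This definition of $\overline\phi$ is really the content of the lemma; the continuity is then routine.

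The one genuine flaw in your write-up is the explicit bound $B_r$. Your claim that each edge of a glued path of length $\le r$ lies in $\Ball{\qq^b,v_0}{r}$, and that the boundary labels encountered are bounded by the number of boundary vertices within $\qq^b$-distance $r$, is false. A single non-boundary edge of $\qq^b$ can join label $0$ to an arbitrarily large label $\ell$; the tree may identify $\ell$ with some $\ell'$ lying at large $\qq^b$-distance from the root; the next edge of the glued path can then emanate from $\ell'$; and so on. Neither $B_r$ nor your suggested correction ``$k+(\text{tree-height reach})$'' controls this, because the needed radii $R',R''$ depend on \emph{both} inputs in an intertwined way and cannot be expressed as a function of $\qq^b$ alone. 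The correct (and shorter) argument, which the paper takes for granted, is: the glued map is itself locally finite (it is the quotient of a locally finite graph by an equivalence with finite classes), so its $r$-ball is a finite set; the preimage of that set in $\qq^b$ is then finite, involving boundary labels in some bounded window, and one simply takes $R',R''$ large enough to swallow that preimage and that window. These radii are realization-dependent, but that is all the sandwich needs, since you compute them once from the limit pair $(\qq^b_\infty,\tt_\infty)$ and then use that for $n$ large the $R'$- and $R''$-balls of $(\qq^b_n,\tt_n)$ agree with those of the limit.
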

This implies \Cref{proploc} by the continuous mapping theorem, \Cref{t.ICT} and the convergence (see \cite{CM12,CC18})
\[
\rqq_{f,m}^b\xrightarrow[local\;(f\rightarrow \infty)]{(d)}\rqq_{\infty,m}^b \xrightarrow[local\;(m\rightarrow \infty)]{(d)} \UIHPQ.
\]
where $\rqq_{f,m}^b$ is a uniform element of $Q^b_{f,m}$.

The natural extension is to glue the infinite boundary starting from the root-edge to the contour of the tree following its root-edge.This extension will never glue the other side of the infinite branch in the one-ended tree, since it will never cross it. This problem can be fix in a simple way, namely to glue edges in both directions starting from the root-edge.
\begin{proof}[Proof of \Cref{lem:cont}]
	Consider $(\qq,\tt)\in  Q_{f,m}^{b}\times T_m$ for $f,m\in \N\cup\{\infty\}$. Define $\overline{\phi}(\qq,\tt)$ as the result of gluing in parallel the left (right) side of the boundary in $\qq$ starting from its root and following its (counter) root-edge sense to the tree starting from its root and following the (counter) root-edge sense. This procedure finishes when the gluing meets from the left and the right (this is well defined since there is an even number of edges).
	This is clearly an extension of $\phi$. For the continuity consider a sequence $(\qq_n,\tt_n)$ converging to $(\qq,\tt)$ in the product local topology. It is easy to see that $\overline{\phi}(\qq_n,\tt_n)$ converges to $\overline{\phi}(\qq,\tt)$ in the local Benjamini-Schramm Uniform topology, since by the locally finite property, for any $R>0$ the ball $\Tr{\overline{\phi}(\qq,\tt)}{R}$ is determined by finite radius balls $\Ball{\qq}{R'}$ and $\Ball{\tt}{R''}$. Since for $n$ big enough $\Ball{\qq_n}{R'} = \Ball{\qq}{R'}$ and $\Ball{\tt_n}{R''}=\Ball{\tt}{R''}$, by applying the gluing procedure we see that $\Ball{\overline{\phi}(\qq_n,\tt_n)}{R}$ and $\Ball{\overline{\phi}(\qq,\tt)}{R}$ coincide for large enough $n$.
\end{proof}

\subsection{Peeling of the infinite tree-decorated quadrangulation}

In this section, we are going to work with the infinite tree-decorated quadrangulation. That is to say, the infinite map defined in Section \ref{S.ll}. We are going to define a specific peeling, i.e. a Markovian way of exploring it. The nice property of the peeling we are going to define is that in its $k$-th step we will have discovered a set that contains the ball of radius $k$ of a given set. The described peeling is based in a closely related peeling that can be found in \cite{CC19,GM2}.

\subsubsection{Description of the peeling}
Let us start with an instance of the $ITQ$, say $(\qq_\infty,\ttm_\infty) = \overline{\phi}(\qq^b_{\infty,\infty},\tt_\infty)$ where $\qq^b_{\infty,\infty}$ and $\tt_\infty$ are instances of the $\UIHPQ$ and infinite critical geometric tree, respectively according to Section \ref{ss.des_lim}. We describe the spine of $\ttm_\infty$ (isometric to $\tt_\infty$) as a copy of $\N$, in which each vertex has two critical GW trees attached to it as in Proposition \ref{p.description_tree} (the one to the left at the top and the right at the bottom). We are going to be interested in peelings of the spine, i.e., $\N$.

Let $r\in \N$, we will study a peeling of the set $\cro{0,r}$ together with all the trees that are attached to those vertices (see Figure \ref{f.peeling_tree}). For simplification let us call this set $\pp_r=\pp$.
\begin{figure}[!h]
	\centering
	\includegraphics[scale=1]{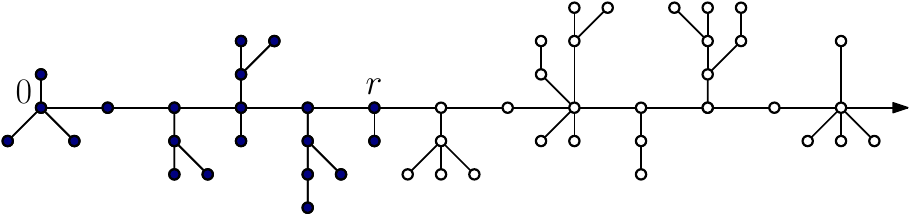}
	\caption{{\small Sketch of the infinite tree $\tt$. The vertices in blue are those belonging to $\pp_r$, when $r=5$. }}
	\label{f.peeling_tree}
\end{figure}

The objective of the peeling will be to construct a sequence of sets $\pp^{(l)} \subseteq \qq_\infty$ such that the ball of radius $l$ around $\pp$ is contained in $\pp^{(l)}$.

In other words, all points in $\qq_\infty\backslash \pp^{(l)}$ should have distance to $\pp$ strictly bigger than $l$.

For each $l$ the peeling process will also define an infinite quadrangulation with infinite simple boundary $\qq^{(l)}\subset_M \qq^b_{\infty,\infty}$ \footnote{This peeling can be read in the preimage, so that $\qq^{(l)}$ is the part to be explored in the preimage.} and an interval $b^{(l)}$ on the boundary of $\qq^{(l)}$. Define $\qq^{(0)}= \qq^b_{\infty,\infty}$, $r^{(0)}=r$ and note that $\pp \subset \qq_\infty$ corresponds (by the gluing) to an interval which we define as $\bb^{(0)}$. Now, iterate for $l\geq 1$ the following
\begin{itemize}
	\item Peel all the faces of $\qq_\infty$ that are the image of a face in $\qq^{(l-1)}$ having a vertex contained in $\bb^{(l-1)}$. Let $r^{(l)}$ be the biggest $n\in \N$ such that $\ttm_\infty$ has a tree attached at $r^{(l)}$ containing a vertex peeled in this step. Note that if we take away from $\qq_\infty$ all the peeled faces and all the vertices associated to $\pp_{r^{(l)}}$, there is a unique infinite connected component which is the image of an infinite quadrangulation with infinite simple boundary $\qq^{(l)}$. We define $\bb^{(l)}$ as the union of
	\begin{itemize}
		\item All the vertices of $\pp_{r^{(l)}}$ that are image of vertices belonging to $\qq^{(l)}$.
		\item All the vertices that were explored in this process that have image belonging to a face in $\qq^{(l)}$.
	\end{itemize}
	Note that $\bb^{(k)}$ is an interval in the boundary of $\qq^{(k)}$.
	We define $\pp^{(l)}\subseteq\qq_\infty$ as the union of: the complement of the image of $\qq^{(l)}$ and the image of $\bb^{(l)}$ (also known as filled-in\footnote{ \label{filled-in}Filled-in explorations here refers to the explorations with target such after each step of the exploration we reveal the parts that are unexplored that do not contain the target point.} of the explored part in the literature).
\end{itemize} 
See \Cref{peel} for an idea of the peeling.
\begin{figure}[h!]
	\includegraphics[scale=0.5]{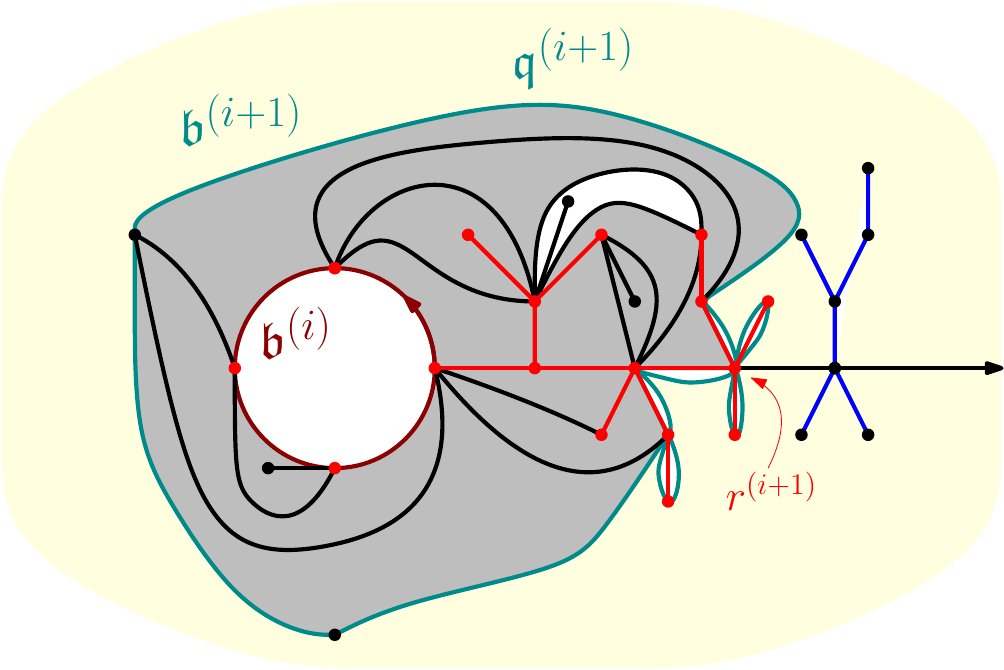}
	\caption{Sketch of the peeling procedure. The grey faces are the faces peeled at stage $i$. The set of red vertices form $\pp_{r^{(i+1)}}$ and the yellow region corresponds to $\qq^{(i+1)}$.}
	\label{peel}
\end{figure}

By construction of the peeling we obtain the following lemma.
\begin{lemma}\label{l.ball_ss_pelota}
	We have that for all $l\in \N$,  $\bigcup_{v \in \pp} B(v,l) \subseteq \pp^{(l)}$.
\end{lemma}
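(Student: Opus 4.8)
The plan is to argue by induction on $l$, using at each step the defining rule of the peeling --- that step $l$ reveals every face of $\qq^{(l-1)}$ incident to the current boundary interval $\bb^{(l-1)}$ --- to absorb exactly the vertices that sit at distance $l$ from $\pp$. Throughout I would work directly in $\qq_\infty$, noting that the gluing map $\overline{\phi}$ is injective on interior vertices and only identifies boundary vertices, so that faces and incidences of $\qq_\infty$ correspond bijectively to those of $\qq^b_{\infty,\infty}$ and the peeling (described on the preimage) is read off on $\qq_\infty$ without ambiguity.

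First I would record the monotonicity of the exploration: $\pp^{(l-1)}\subseteq\pp^{(l)}$ for every $l\ge 1$. Indeed $\qq^{(l)}\subset_M\qq^{(l-1)}$, so the complement of the image of $\qq^{(l)}$ contains the complement of the image of $\qq^{(l-1)}$, while each vertex of $\bb^{(l-1)}$ is either swallowed into that complement or survives as a vertex of $\qq^{(l)}$ that was explored at step $l$ and hence lies in $\bb^{(l)}$; in either case it belongs to $\pp^{(l)}$. The base case $l=0$ is then immediate: since $\qq^{(0)}=\qq^b_{\infty,\infty}$, the complement of its image in $\qq_\infty$ is empty and the image of $\bb^{(0)}$ is $\pp$, so $\pp^{(0)}=\pp=\bigcup_{v\in\pp}B(v,0)$.

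For the inductive step, assume $\bigcup_{v\in\pp}B(v,l-1)\subseteq\pp^{(l-1)}$ and let $w$ be a vertex of $\qq_\infty$ with $d(w,\pp)\le l$. If $d(w,\pp)\le l-1$ then $w\in\pp^{(l-1)}\subseteq\pp^{(l)}$ by the induction hypothesis and monotonicity, so suppose $d(w,\pp)=l$. Pick a geodesic from $\pp$ to $w$ and let $w'$ be the vertex immediately preceding $w$ on it, so $d(w',\pp)\le l-1$, hence $w'\in\pp^{(l-1)}$, and $w'$ is adjacent to $w$. If $w\in\pp^{(l-1)}$ we are done; otherwise $w$ is neither in the complement of the image of $\qq^{(l-1)}$ nor in $\bb^{(l-1)}$, i.e.\ $w$ is an interior vertex of $\qq^{(l-1)}$, so every face incident to $w$ --- in particular every face incident to the edge $\{w,w'\}$ --- is a face of $\qq^{(l-1)}$, and every neighbour of $w$ is a vertex of $\qq^{(l-1)}$. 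Thus $w'$ is a vertex of $\qq^{(l-1)}$ lying in $\pp^{(l-1)}$, which forces $w'\in\bb^{(l-1)}$. Consequently every face of $\qq^{(l-1)}$ incident to $\{w,w'\}$ has $w'\in\bb^{(l-1)}$ as a vertex, so by the peeling rule all such faces are peeled at step $l$. Hence $w$ is a vertex of a face peeled at step $l$: either $w$ no longer belongs to $\qq^{(l)}$, so $w$ lies in the complement of the image of $\qq^{(l)}$, or $w$ is still a vertex of $\qq^{(l)}$, in which case $w$ is an explored vertex whose image lies in a face of $\qq^{(l)}$ and therefore $w\in\bb^{(l)}$; in both cases $w\in\pp^{(l)}$, completing the induction.

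The step I expect to require the most care is the topological claim used above: that an edge of $\qq_\infty$ joining a vertex of $\pp^{(l-1)}$ to a vertex outside it has its inner endpoint on $\bb^{(l-1)}$ and lifts to an edge of $\qq^{(l-1)}$. This rests on $\qq^{(l-1)}$ being the \emph{unique infinite component} remaining after deleting the peeled faces and the vertices of $\pp_{r^{(l-1)}}$, so that ``filled-in'' genuinely means the explored region meets the unexplored region only along $\bb^{(l-1)}$, and that the only vertices of $\qq^{(l-1)}$ belonging to $\pp^{(l-1)}$ are those of $\bb^{(l-1)}$. Making these two facts precise from the construction is the crux; the rest of the argument is the bookkeeping carried out above.
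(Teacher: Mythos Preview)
Your inductive argument is correct and is precisely the reasoning the paper has in mind; the paper itself offers no proof beyond the sentence ``By construction of the peeling we obtain the following lemma'', so you have simply made explicit what is implicit there.

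One small imprecision worth cleaning up: when $w\notin\pp^{(l-1)}$ you conclude that $w$ is an \emph{interior} vertex of $\qq^{(l-1)}$, but in fact $w$ could also be a boundary vertex of $\qq^{(l-1)}$ lying outside the active interval $\bb^{(l-1)}$ (namely, the image of an original boundary vertex of $\qq^b_{\infty,\infty}$ corresponding to a tree vertex beyond $\pp_{r^{(l-1)}}$). The conclusion you draw---that every face of $\qq_\infty$ incident to $w$ is still a face of $\qq^{(l-1)}$ and every neighbour of $w$ lies in $\qq^{(l-1)}$---remains valid in that case as well, because all preimages of such a $w$ sit on the untouched part of the original boundary. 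So the argument goes through unchanged; only the word ``interior'' should be replaced by ``a vertex of $\qq^{(l-1)}$ not in $\bb^{(l-1)}$''. You already flag the underlying topological point (that $\pp^{(l-1)}$ meets the image of $\qq^{(l-1)}$ exactly along $\bb^{(l-1)}$) as the crux, and that is indeed where the honest work lies.
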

Define the random variable $\rqq^{(l)}$ with value $\qq^{(l)}$ rooted at the edge with image $(r^{(l)}r^{(l-1)})$ when $(\rqq,\rtt)= (\qq_{\infty,\infty}^b,\tt_\infty)$. 
Also define the random variable $\rtt^{(l)}$ equal to the unexplored part of $\ttm_\infty$ (an infinite tree) rooted at the edge $(r^{(l)}r^{(l+1)})$ when $(\rqq,\rtt)= (\qq_{\infty,\infty}^b,\tt_\infty)$.
This peeling is Markovian in the following sense.
\begin{lemma}
	$\rqq^{(l)}$ is distributed as the $\UIHPQ$ and $\rtt^{(l)}$ is distributed as the infinite critical geometric tree.
\end{lemma}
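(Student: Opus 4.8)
The plan is to exploit the spatial Markov property of the $\UIHPQ$ together with the regenerative (record) structure of the infinite critical geometric tree that underlies the gluing $\overline{\phi}$. Recall that $(\qq_\infty,\ttm_\infty)=\overline{\phi}(\qq^b_{\infty,\infty},\tt_\infty)$, and that the peeling at step $l$ reveals only faces of $\qq^b_{\infty,\infty}$ that touch the interval $\bb^{(l-1)}$ and only the subtrees of $\tt_\infty$ hanging from the spine vertices $\cro{0,r^{(l)}}$. First I would argue that both the revealed piece of $\qq^b_{\infty,\infty}$ and the revealed piece of $\tt_\infty$ are stopping sets for their respective filtrations: the set of peeled faces is a finite (hence admissible) region produced by a peeling-with-target-at-infinity algorithm, and $\tt_\infty$ truncated below the spine height $r^{(l)}$ together with all subtrees hanging from $\cro{0,r^{(l)}}$ is exactly the stopping set $\tt_\infty(r^{(l)})$ of \Cref{d.subtreee}, since $r^{(l)}$ is itself a stopping time for the exploration of the spine (it is the largest spine vertex carrying a face revealed in step $l$, and this is measurable with respect to the explored part).

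Next I would invoke the spatial Markov property of the $\UIHPQ$ recalled in \Cref{ss.UIHPQ}: conditionally on any admissible explored region obtained by a filled-in peeling, the unexplored part, suitably rooted on the frontier, is again a $\UIHPQ$. Applying this to the explored region at step $l$ shows that $\rqq^{(l)}$, rooted at the edge with image $(r^{(l)}r^{(l-1)})$ (an edge lying on the boundary $\bb^{(l)}$ of the unexplored part), is a $\UIHPQ$; here one uses invariance of the $\UIHPQ$ under rerooting along the boundary to move the root to the prescribed edge. For the tree, I would use the regenerative description from \Cref{prop:rwdes}: cutting the spine at the vertex $\tau_{-r^{(l)}}$, the portion of $\tt_\infty$ above that height — i.e. the remaining spine starting at $r^{(l)}$ with the two independent critical geometric GW trees hung at each of its vertices — has, by the strong Markov property of the simple random walk at the record time corresponding to $r^{(l)}$, exactly the law of an infinite critical geometric tree, and rooting it at the spine edge $(r^{(l)}r^{(l+1)})$ is the canonical rooting. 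Finally I would note that, because the gluing $\overline{\phi}$ is a deterministic measurable function that commutes with these decompositions (the image of the unexplored $\UIHPQ$ glued along the unexplored tree is precisely the unexplored part of $\qq_\infty$, by the very construction of $\pp^{(l)}$ as complement of the image of $\qq^{(l)}$), the two conditional laws are as claimed, and in fact $(\rqq^{(l)},\rtt^{(l)})$ is itself distributed as an ITQ independently of the explored part, which is the Markovianity of the peeling.

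The main obstacle I expect is bookkeeping rather than conceptual: one must check carefully that the explored region really is an \emph{admissible} (simply connected, finite) subregion of the $\UIHPQ$ to which the spatial Markov property applies — this is why the peeling is defined in filled-in form, so that no finite ``holes'' separated from infinity remain — and that the random spine height $r^{(l)}$ is genuinely a stopping time compatible with both the tree's record filtration and the map's peeling filtration simultaneously, so that the two strong Markov properties can be applied jointly without introducing correlation between $\rqq^{(l)}$ and $\rtt^{(l)}$. Once the stopping-set formalism is in place, the statement follows by a direct application of the two Markov properties together with rerooting invariance; I would carry out the argument by induction on $l$, the base case $l=0$ being the definition of the ITQ.
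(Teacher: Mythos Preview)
Your approach is correct and is precisely the argument the paper intends. Note, however, that the paper does not actually write out a proof of this lemma: it is stated, followed only by a remark (with an illustrative figure) that it can be established as a Markovian property on the decorated map, and the formal verification is omitted. Your sketch fills in what the paper leaves implicit, and your identification of the only delicate point --- that $r^{(l)}$ must be a stopping time for the \emph{joint} filtration of the tree and the $\UIHPQ$, so that the spatial Markov property of the $\UIHPQ$ and the record-regenerative structure of $\tt_\infty$ can be applied simultaneously without spoiling the independence of $\rqq^{(l)}$ and $\rtt^{(l)}$ --- is exactly right.
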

\begin{rem}
	This can be establish as a Markovian property on a decorated map with simple boundary (see \Cref{Fig:Markov}), however we skip this, since it will not be needed in the proof.
	\begin{figure}
		\includegraphics[scale=0.47]{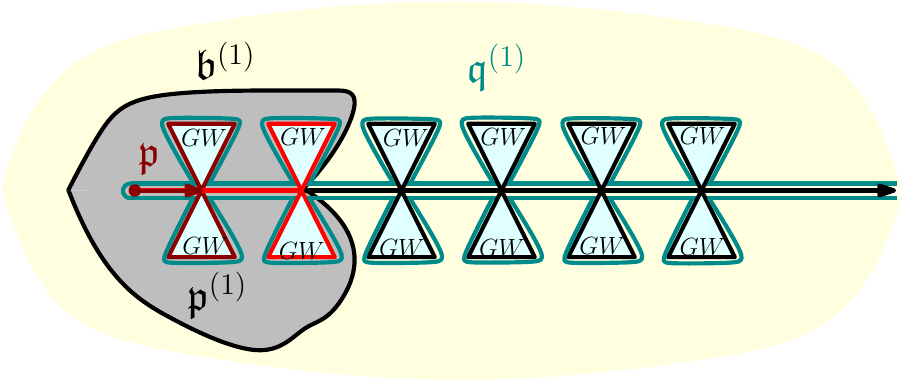}  \includegraphics[scale=0.47]{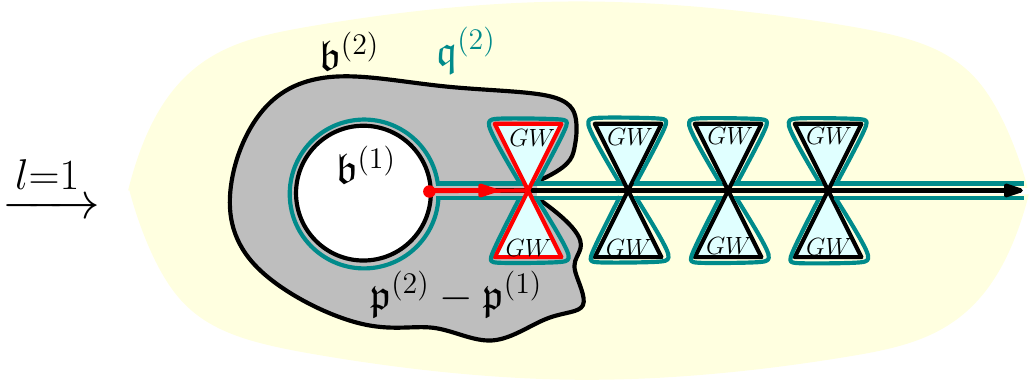}\\
		\includegraphics[scale=0.47]{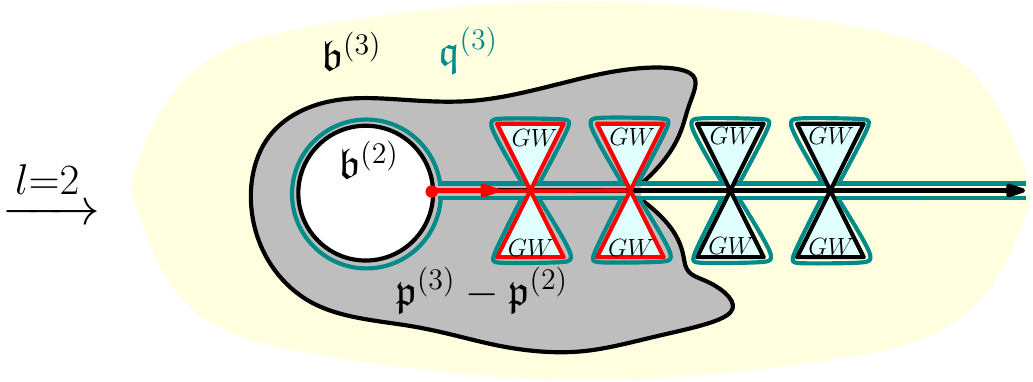}
		\caption{The Markovian property seen from the decorated map, where in each successive step we take out the $\pp^{(l)} -\pp^{(l-1)}$ discovered by the peeling in the preceding step. The red part represents the points that are covered up to $r^{(l)}$ in the tree, the grey parts represent the increment between balls of $\pp$ in each stage of the peeling, and the black boundary represents the intervals $b^{(l)}$. The map $\qq^{(l)}$ consists in the yellow part delimited by the black outer boundary and the green line that is not contained in the grey part.} 
		\label{Fig:Markov}
	\end{figure}
\end{rem}

\subsubsection{Overshoot estimate in the peeling with target}
Now, we  prove that distances after gluing are bigger than $n/\log(n)$ for points belonging to trees whose roots are bigger than $n$ in the natural parametrization of the spine in the decoration. Recall from Definition \ref{d.subtreee} that $\ttm_\infty(m)$ is the finite subtree of $\ttm_\infty$ consisting on the spine $\cro{0,m}$ and all the trees that are attached to this part of the spine.

\begin{prop}\label{p.max_jump}
	For any $\alpha>1$ and $\epsilon>0$, we have that with high probability as $n\to \infty$
	\[
	\frac{n}{\log(n)^\alpha} \leq diam(\ttm_\infty(n))\leq \epsilon n.
	\]
\end{prop}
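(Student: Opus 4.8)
The plan is to prove the two bounds on $\mathrm{diam}(\ttm_\infty(n))$ separately, both using the peeling-with-target introduced above together with the overshoot estimates \eqref{overshoot:proba} for the $\UIHPQ$.

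For the upper bound I would argue as follows. Recall the Markovian peeling: at each step $l$ we peel all faces of $\qq_\infty$ that are images of faces in $\qq^{(l-1)}$ touching $\bb^{(l-1)}$, and $r^{(l)}$ records how far along the spine the decoration has been swallowed. The key observation is that $\bigcup_{v\in\pp} B(v,l)\subseteq \pp^{(l)}$ (Lemma \ref{l.ball_ss_pelota}), so in particular a point at spine-position $r^{(l)}$ has distance at most $l$ to $\pp=\pp_r$; taking $r=0$ (so $\pp$ is just the root together with its attached GW trees) this says $\ttm_\infty(r^{(l)})\subseteq \pp^{(l)}$, hence $\mathrm{diam}(\ttm_\infty(r^{(l)}))\le 2l$ in the glued map, and therefore in the tree metric as well since gluing only decreases distances and the tree metric dominates the glued metric on the decoration... more carefully: the diameter in question is the \emph{glued} diameter, so $\mathrm{diam}(\ttm_\infty(r^{(l)}))\le 2l$. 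It thus suffices to show that $r^{(l)}$ grows at least linearly in $l$ with high probability. By the Markov property, the increment $r^{(l)}-r^{(l-1)}$ is (stochastically bounded below by, and comparable to) a fresh copy of the overshoot $O(0)$ from $0$ in an independent $\UIHPQ$, which by the estimate just above \eqref{overshoot:proba} satisfies $\P(O(0)\ge k)\asymp k^{-1/2}$. These increments have infinite mean, so $r^{(l)}\ge \sum_{j=1}^l O_j$ with $O_j$ i.i.d. of tail $\asymp k^{-1/2}$; a standard stable-law / truncation argument gives $\sum_{j=1}^l O_j \gg l^{2-\delta}$, in particular $\ge C l$, with high probability. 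Choosing $l=\epsilon n/(2C)$ then forces $r^{(l)}\ge n$ and hence $\mathrm{diam}(\ttm_\infty(n))\le 2l\le\epsilon n$.

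For the lower bound I would run the same peeling but now estimate how \emph{slowly} $r^{(l)}$ can grow. Each peeling step at level $l$ reveals, via the simple overshoots $O^s(\cdot)$ of the finitely many boundary vertices exposed in $\bb^{(l-1)}$, how far the spine-index can jump; since $\P(O^s(0)\ge k)\asymp k^{-3/2}$ has finite mean, and the number of newly exposed boundary vertices per step is itself tight (another consequence of the $\UIHPQ$ Markov property), the increment $r^{(l)}-r^{(l-1)}$ is stochastically dominated by a sum of a tight number of i.i.d. variables with tail $\asymp k^{-3/2}$, hence has tails decreasing like $k^{-1/2}$ — wait, let me restate this in the form the paper announces: the furthest point in the spine inside $B(0,n)$ is $r^{(n)}$, and we claim $r^{(n)}\le n\log^\alpha(n)$ whp. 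Equivalently, the \emph{first} level $L_n$ at which $r^{(L_n)}\ge n$ satisfies $L_n\ge n/\log^\alpha(n)$ whp, which gives $\mathrm{diam}(\ttm_\infty(n))\ge L_n \ge n/\log^\alpha(n)$ because $\pp_n\not\subseteq\pp^{(L_n-1)}$ means some vertex of $\ttm_\infty(n)$ has distance exactly $L_n$ from the root. To prove $L_n\gtrsim n/\log^\alpha n$: $r^{(l)}$ is stochastically dominated by $\sum_{j=1}^{l} \xi_j$ where the $\xi_j$ are i.i.d. positive with $\P(\xi_j\ge k)\lesssim k^{-1}$ (the ``effective'' per-step jump after accounting for the bounded number of exposed vertices, each contributing a simple overshoot of tail $k^{-3/2}$ — actually the relevant tail is that of the \emph{infinite} overshoot $O(0)$, tail $k^{-1/2}$; I will need to be careful here and possibly use $k^{-1}$ after a one-more-step bookkeeping, matching the $\alpha>1$ in the statement). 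For such heavy-tailed sums one has $\max_{j\le l}\xi_j \asymp l$ and $\sum_{j\le l}\xi_j = O(l\log^\alpha l)$ whp for any $\alpha>1$ (truncate at level $l\log l$, bound the truncated sum by its mean $\asymp l\log l$ via Markov, and bound the contribution of atoms exceeding $l\log l$ by a union bound using $\sum_{j\le l}\P(\xi_j> l\log l)\to 0$). Hence $r^{(l)}= O(l\log^\alpha l)$ whp, so to reach $n$ we need $l\gtrsim n/\log^\alpha n$, i.e. $L_n\gtrsim n/\log^\alpha n$.

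The main obstacle — and the step requiring the most care — is the precise stochastic comparison between the spine-increment $r^{(l)}-r^{(l-1)}$ in the decorated peeling and a clean i.i.d. sequence with the advertised tails. The subtlety is that the peeling exposes a \emph{random} (though tight) number of boundary vertices of the underlying $\UIHPQ$ at each step, that the gluing identifies boundary vertices according to the contour of the tree (which controls which spine-indices get "caught"), and that one must phrase everything so the $\UIHPQ$ Markov property ($\rqq^{(l)}\eqd\UIHPQ$, $\rtt^{(l)}\eqd$ infinite critical geometric tree) genuinely decouples successive increments. Getting the correct exponent — $k^{-1/2}$ for the infinite overshoot versus the $k^{-1}$ needed to match $\alpha>1$ — is exactly where the logarithmic gap between the two bounds in \eqref{e.bounds_finite_discrete} comes from, and I expect this bookkeeping (likely: one infinite-overshoot jump has tail $k^{-1/2}$, but the number of jumps needed to advance the \emph{tree's} spine by one unit is itself a fresh geometric-type quantity, and the composition yields tail $k^{-1}$ up to constants) to be the technical heart of the argument; the rest is standard heavy-tailed sum asymptotics.
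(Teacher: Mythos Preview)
Your lower-bound strategy is the right one, but there are two concrete gaps, and your upper-bound route diverges from the paper in a way that does not quite work.

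\textbf{Upper bound.} The paper does not use the peeling here at all: it simply invokes Corollary~\ref{cor2} from the infinite continuous section, where $\dd_{\Rmm}(\kappa_0,\kappa_y)=0$ for every $y$, and reads off the discrete $o(n)$ bound from that. Your alternative peeling argument rests on the claim that $r^{(l)}-r^{(l-1)}$ stochastically dominates a fresh copy of the infinite overshoot $O(0)$. This is false as stated, because $O(0)$ is measured in \emph{boundary} units while $r^{(l)}$ records \emph{spine} position: one spine step corresponds to the contour length of an entire attached GW tree, i.e.\ a heavy-tailed random number of boundary vertices. In particular it can happen that the boundary advances by many units in one peeling step while the spine does not advance at all (the next GW tree is large), so there is no clean i.i.d.\ lower bound on the spine increments, and the paper proves none. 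Note also that if your claimed domination held, the increments would have tail $k^{-1/2}$, which is inconsistent with the paper's Lemma~\ref{l.law_overshoot} giving tail $\le C a^{-1}$.

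\textbf{Lower bound.} Your target $r^{(l)}=O(l\log^\alpha l)$ is exactly right, and once the tail bound $\P(r^{(l)}-r^{(l-1)}\ge a\mid\mathcal F_{l-1})\le C a^{-1}$ is established your truncation-plus-union-bound argument is a valid and arguably cleaner substitute for the paper's route via convergence to an asymmetric Cauchy law. But the mechanism you describe for obtaining the $a^{-1}$ tail is off. The relevant overshoot in a single peeling step is the \emph{simple} overshoot $O^s$ (tail $\asymp k^{-3/2}$), not the infinite overshoot $O(0)$: we only peel faces touching $\bb^{(l-1)}$, so the boundary jump past the right endpoint $x^+$ is dominated by a fresh $O^s$. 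The conversion to spine units then reads: the spine advances by at least $a$ only if $O^s\ge \sum_{i=1}^{a}\tau_i$, where the $\tau_i$ are i.i.d.\ sizes of critical geometric GW trees (equivalently, first hitting times of $-1$ by a simple random walk). The paper isolates this as Claim~\ref{c.real_q} and computes
\[
\P\Big(O^s\ge \sum_{i=1}^{a}\tau_i\Big)=\P\Big(\min_{0\le k\le O^s}X(k)<-a\Big)\le C\sum_{l\ge 1}\P\Big(\min_{k\le l}X(k)<-a\Big)l^{-3/2}\le C a^{-1},
\]
with $X$ an independent simple random walk. So the ``one-more-step bookkeeping'' you anticipate is not a second peeling layer but precisely this boundary-to-spine conversion through the attached GW trees; your guesses involving $O(0)$ or ``a tight number of $k^{-3/2}$ variables giving $k^{-1/2}$'' do not produce the correct exponent.
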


The proposition follows from the study the law of $r^{(l)}-r^{(l-1)}$.
\begin{lemma}\label{l.law_overshoot}
	There exists a (deterministic) constant $C$ independent of $l$ and $r$ such that
	\begin{align}
		\P(r^{(l)}-r^{(l-1)}\geq a\mid \mathcal F_{l-1})\leq Ca^{-1} + o(a^{-3}) \quad \forall a\in \N\setminus \{0\},
	\end{align}
	where $\mathcal F_{l}$ is the filtration associated to the peeling process up to time $l$.
\end{lemma}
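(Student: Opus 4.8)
The plan is to reduce the tail of $r^{(l)}-r^{(l-1)}$ to the overshoot estimate \eqref{overshoot:proba} for the UIHPQ, using the Markovian description of the peeling. Conditionally on $\mathcal F_{l-1}$, the unexplored map $\rqq^{(l-1)}$ is a UIHPQ and the unexplored tree $\rtt^{(l-1)}$ is an infinite critical geometric tree, independent of each other. At step $l$ we peel the faces incident to the interval $\bb^{(l-1)}$, and $r^{(l)}-r^{(l-1)}$ is the number of \emph{additional} spine vertices of $\ttm_\infty$ that become involved, i.e. the largest index of a bubble-tree attached to a vertex that is touched during this peel step. First I would observe that the preimage of $\bb^{(l-1)}$ on the boundary of the UIHPQ is an interval whose endpoints are, on the tree side, identified with the spine vertex $\tau_{-r^{(l-1)}}$; the quantity $r^{(l)}-r^{(l-1)}$ is therefore controlled by how far along the boundary of $\rqq^{(l-1)}$ (in units of boundary-edge length) the peeled faces reach past the segment coding the subtree hung at $\tau_{-r^{(l-1)}}$. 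Concretely, each face peeled around a boundary vertex reaches at most an overshoot distance $O^s$ into the future boundary, and \eqref{overshoot:proba} gives $\P(O^s \ge k)\asymp k^{-3/2}$.

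The key step is then a two-layer estimate. A displacement of $a$ on the spine of $\ttm_\infty$ corresponds, under the contour encoding of the critical geometric tree, to a random boundary length $L_a$ of the UIHPQ being swallowed; since the contour walk of a critical geometric tree has $\sqrt{a}$-type fluctuations, $L_a$ is typically of order $a^2$ but has a heavy lower tail: the event $\{r^{(l)}-r^{(l-1)}\ge a\}$ requires that one of the peeled faces has overshoot at least of order the boundary-length gap, which is at least order $1$ but can be as small as $1$ when the tree-contour happens to return quickly. The cleanest route is: write $\{r^{(l)}-r^{(l-1)}\ge a\}\subseteq\{\exists \text{ peeled face at boundary position }j\text{ with }O^s(j)\ge \Lambda_a\}$, where $\Lambda_a$ is the boundary length separating the root bubble of $\tau_{-r^{(l-1)}}$ from the bubbles hung on $\cro{r^{(l-1)}+a,\infty}$; condition on the tree to fix $\Lambda_a$, bound the number of peeled faces (a.s.\ finite, with exponential tails, by one-endedness of the UIHPQ and standard peeling estimates), and apply \eqref{overshoot:proba} and a union bound to get $\P(\,\cdot\mid\mathcal F_{l-1},\,\text{tree}\,)\le C'\,\E[\#\text{faces}]\,\Lambda_a^{-3/2}$. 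Finally average over the tree: $\Lambda_a$ is a sum of $a$ i.i.d.\ geometric bubble-perimeters, so $\Lambda_a\ge a$ deterministically, giving $\P(r^{(l)}-r^{(l-1)}\ge a\mid\mathcal F_{l-1})\le C a^{-3/2}$, which is stronger than $Ca^{-1}+o(a^{-3})$; the $Ca^{-1}$ form is kept because for the subsequent diameter bound in Proposition \ref{p.max_jump} only the summability against $\log^{-\alpha}$ is used, and the weaker bound suffices while being robust to the precise constant. The independence of the bound from $l$ and $r$ is immediate from the Markov property, since none of the estimates sees the history beyond $\mathcal F_{l-1}$.

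The main obstacle is making rigorous the identification of $r^{(l)}-r^{(l-1)}$ with a boundary-length overshoot on the UIHPQ side: one must check that a peel step of all faces incident to $\bb^{(l-1)}$ really cannot reach a spine vertex whose bubble is separated by more than the overshoot of \emph{some} peeled face, and that the relevant "distance" on the gluing boundary is exactly the boundary-edge length of $\rqq^{(l-1)}$ — this is where the $\overline\phi$-gluing of both sides of the contour (left and right) must be used, so that the interval $\bb^{(l-1)}$ has well-controlled endpoints in the preimage. A secondary technical point is controlling the number of faces peeled in one step; this follows from the one-endedness of the UIHPQ together with the fact that a single peel step of an interval of boundary length $p$ exposes a map whose boundary is still a.s.\ finite perturbation, so $\#\{\text{peeled faces}\}$ has at worst polynomial moments, which is more than enough for the union bound to close.
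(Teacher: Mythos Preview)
Your reduction to an overshoot is in the right spirit, but there are two genuine gaps.

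First, the union bound over ``all peeled faces'' cannot yield a constant independent of $l$. At step $l$ one peels every face incident to the interval $\bb^{(l-1)}$, whose length $|\bb^{(l-1)}|$ is $\mathcal F_{l-1}$-measurable but unbounded as $l$ grows; hence $\E[\#\text{faces}\mid\mathcal F_{l-1}]$ is of order $|\bb^{(l-1)}|$, and the bound you obtain is not uniform in $l,r$. The paper sidesteps this by observing that what controls $r^{(l)}-r^{(l-1)}$ is only how far past the two \emph{endpoints} $x^\pm$ of $\bb^{(l-1)}$ the peeled region reaches; these two overshoots $\tilde O^{\pm}$ are each dominated by a single UIHPQ overshoot variable whose law does not depend on $|\bb^{(l-1)}|$, and it is that variable which is compared to the tree-side length $\Lambda_a=\sum_{i=1}^a\tau_i$. (If instead you do the union bound carefully, using that a face at position $j\le x^+$ must overshoot by at least $\Lambda_a+(x^+-j)$, the sum over $j$ telescopes to $\asymp\Lambda_a^{-1/2}$ --- this is exactly the infinite-overshoot tail --- which is uniform but weaker than the $\Lambda_a^{-3/2}$ you claim.)

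Second, the bound $\Lambda_a\ge a$ is too crude to close. The $\tau_i$ are not ``geometric bubble-perimeters'': they have the law of the first hitting time of $-1$ by a simple random walk (equivalently, twice the size of a critical geometric GW tree), so $\P(\tau_i\ge n)\asymp n^{-1/2}$ and $\Lambda_a$ is typically of order $a^2$, not $a$. Plugging $\Lambda_a\ge a$ into the uniform overshoot tail $\Lambda_a^{-1/2}$ gives only $Ca^{-1/2}$, which would put the increments in the domain of attraction of a $\tfrac12$-stable law and yield $\sum_{j\le l}s^{(j)}\asymp l^2$ --- far too weak for the $n/\log^\alpha(n)$ lower bound of Proposition~\ref{p.max_jump}. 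The paper instead exploits the coupling $\{\sum_{i=1}^a\tau_i\le j\}=\{\min_{k\le j}X(k)<-a\}$ for a simple random walk $X$, rewrites $\P(O^s\ge\Lambda_a)=\P(\min_{k\le O^s}X(k)<-a)$, and integrates against the tail of $O^s$ with a split at $l=a^2$ and a Hoeffding bound on the small-$l$ part; this joint computation is what produces $Ca^{-1}$.
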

Let us first prove that Proposition \ref{p.max_jump} follows from \ref{l.law_overshoot}.
\begin{proof}[Proof of Proposition \ref{p.max_jump}]
	We start by noting that the inequality to the right comes directly from the definition of the metric in the infinite continuous volume, \Cref{cor2}.
	For the left inequality it is enough to show that for every $\alpha>1$ with high probability as $n\to \infty$
	\begin{align*}
		\dd_{\mathsf{M}}(0,x)\geq \frac{n}{\log(n)^\alpha}+1 \ \ \forall x \in \ttm_\infty \backslash \ttm_\infty(n).\end{align*}
	We show this by studying the process $r^{(l)}$.
	
	An implication of Lemma \ref{l.law_overshoot} is that one can couple $r^{(l)}-r^{(l-1)}$ with an i.i.d. sequence $s^{(l)}$ such that $r^{(l)}-r^{(l-1)}\leq s^{(l)}$ and furthermore there exist a constant $C$ such that
	\begin{align*}
		\P(s^{(l)}\geq s)\leq Cs^{-1} + o(s^{-3}) \quad \forall s\in \N\setminus \{0\}.
	\end{align*}
	Since $r^{(l)} \leq \sum_{j=1}^l s^{(j)}$ it is enough to study the last sum.
	
	From \cite[Theorem 3]{H81} and \Cref{l.law_overshoot}, we obtain that for some choices of $a_l$ and $b_l$ the random variable $a_l^{-1}\sum_{j=1}^l s^{(j)} - b_l$ converges in distribution to an asymmetric Cauchy random variable, whose distribution we denote by $F_C$. This convergence has uniform error of size $o(1/\ln(l))$. Now, we use \cite[Theorem 8.3.1]{BGTT89} to chose $a_l= l$ and $b_l = C\log(l)$ (this is the same $C$ as in \Cref{l.law_overshoot}). 
	For $C^- = C- \nu$, with $0<\nu<C$, this gives
	\[
	\P\left(  \frac{\sum_{j=1}^l s^{(j)} }{l}- C\log(l)>-\nu\log(l)\right) = F_C(-\nu\log(l)) + o\left(\frac{1}{\log(l)}\right)
	\]
	If we consider $l = n/\log(n)$ this gives as $n\to \infty$ that
	\begin{align}\label{eq1}
		\P\left( \sum_{j=1}^{n/\log(n)} s^{(j)} \geq C^-n + o(n) \right)\leq F_C\left(-\nu\log\left(\frac{n}{\log(n)}\right)\right) + o\left(\log\left(\frac{n}{\log(n)}\right)^{-1}\right)
	\end{align}
	Here we use the fact that if $f(y)=y\log(y)$, then $f^{-1}(x)=\exp(W(x))$, where $W$ is the $W$ Lambert function. We also used the fact that $W(x)\sim \log(x)- \log(\log(x)) + \kappa\frac{\log(\log(x))}{\log(x)}$ as $x\rightarrow \infty$.
	
	We conclude by noting that the right hand side of \eqref{eq1} goes to $0$ as $n\to \infty$ and 
	\begin{align*}
		B_{l}(\mathsf{M})\cap \ttm_\infty\subseteq \ttm_{\infty}(r^{(l)}).
	\end{align*}
\end{proof}

We know discuss the proof of Lemma \ref{l.law_overshoot}.
\begin{proof}[Proof of Lemma \ref{l.law_overshoot}]
	We start by noting that conditioning on $\mathcal F_{l-1}$, we peel all faces of $\qq^{(l-1)}$ (that is an $\UIHPQ$) that have a vertex in the interval of the boundary $\bb^{(l-1)}=[x^{-},x^{+}]$. Let us first study how far this peeling goes in the boundary of $\qq^{(l-1)}$. As usual we associate to the boundary of $\qq^{(l-1)}$ a copy of $\Z$.
	
	Let us first define $\tilde O^+=\tilde x^+-x^+$, where $\tilde x^+$ is the largest point in $\bb^{(l-1)}$ that is peeled in the step $l$. We do the analogue definition for $\tilde O^-= x^--\tilde x^-$. Let us note that $\tilde O^+,\tilde O^-$ are stochastically dominated by $O^s$, where $O^s$ is the overshoot defined in Section \ref{ss.UIHPQ}.
	
	We now define $r^{(l),+}$, resp. $r^{(l),-}$, as the biggest $n\in \N$ such that $\ttm_\infty$ has a tree attached to its top, resp. bottom, part at step $l$. In other words, $r^{(l)}=\max\{r^{(l),+},r^{(l),-} \}$. Let us now note that for any $a\in \N\backslash\{0\}$
	\begin{align}\label{eq.smae}
		\P\left (r^{(l),+}-r^{(l-1)}\geq a\mid \mathcal F_{l-1}\right )= \P\left (\tilde O^+ >  \sum_{i=1}^{a-1} \tau_i \mid \mathcal F_{l-1}\right )\leq \P\left( O^s \geq  \sum_{i=1}^a \tau_i\right ),
	\end{align}
	where $\tau_i$ are i.i.d random variables having the law of twice the vertices of a critical geometric GW tree (not condition to survive, part 2) in \Cref{p.description_tree}). Inequality \ref{eq.smae} also hold when changing $r^{(l),+}$ by $r^{(l),-}$ Let us recall that the law of $\tau_i$ is the same as the law of the first time a simple random walk hits level $-1$.
	
	Thus, to finish the proof of this lemma we need to prove the following claim.
	\begin{claim}\label{c.real_q}
		We have that
		\begin{align*}
			\P\left (O^s\geq \sum_{i=1}^a \tau_i\right )\leq Ca^{-1} + o(a^{-3}).
		\end{align*}
	\end{claim}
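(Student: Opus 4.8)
The plan is to estimate the probability $\P(O^s \geq S_a)$, where $S_a = \sum_{i=1}^a \tau_i$ and each $\tau_i$ is distributed as the first hitting time of $-1$ by a simple random walk, by splitting according to whether $S_a$ is large or small compared to $a^2$. The intuition is that $\tau_i$ has a heavy tail $\P(\tau_i \geq k) \asymp k^{-1/2}$ (it is the hitting time of $-1$, hence in the domain of attraction of a $1/2$-stable law), so the sum $S_a$ is typically of order $a^2$; meanwhile the overshoot satisfies $\P(O^s \geq k) \asymp k^{-3/2}$ by \eqref{overshoot:proba}. On the event $\{S_a \geq a^2\}$ we would bound $\P(O^s \geq S_a \mid S_a) \leq \P(O^s \geq a^2) \asymp a^{-3}$, which is of the desired order $o(a^{-3})$... actually this gives exactly order $a^{-3}$, so it must be absorbed carefully; it is at most $Ca^{-1}$ trivially and contributes at the $a^{-3}$ scale, consistent with the claim. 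On the complementary event $\{S_a < a^2\}$ we would use $\P(S_a < a^2)$, and here the point is to show this probability is $O(a^{-1})$.

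Concretely, first I would condition on the value of $S_a$ and write
\[
\P(O^s \geq S_a) = \E\big[\P(O^s \geq S_a \mid S_a)\big] \leq \E\big[ (1 \wedge C' S_a^{-3/2}) \big],
\]
using \eqref{overshoot:proba} and the independence of $O^s$ from the tree structure encoding $\tau_i$. Then I would split the expectation over $\{S_a \geq \delta a^2\}$ and $\{S_a < \delta a^2\}$ for a suitable constant $\delta$. On the first event, $C' S_a^{-3/2} \leq C'' a^{-3}$, contributing a term of order $a^{-3}$. On the second event we bound $1 \wedge C' S_a^{-3/2} \leq 1$, so we need $\P(S_a < \delta a^2) = O(a^{-1})$. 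This small-deviation estimate is the crux: since $\tau_i \geq 1$ always and $S_a$ is a sum of $a$ i.i.d. terms each in the domain of attraction of a positive $1/2$-stable law with scaling $a^2$, the rescaled variable $S_a / a^2$ converges in law to a positive stable random variable $\xi_{1/2}$ whose density near $0$ decays very fast (it is bounded, in fact the stable subordinator of index $1/2$ has $\P(\xi_{1/2} \leq x) \to 0$ faster than any polynomial as $x \to 0$). So $\P(S_a < \delta a^2) \to \P(\xi_{1/2} < \delta)$, which can be made arbitrarily small by choosing $\delta$ small — but to get the $a^{-1}$ rate one needs a quantitative bound, not just convergence.

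To get the quantitative rate I would instead use a direct truncation/union argument: $S_a < \delta a^2$ forces "most" of the $\tau_i$ to be atypically small, and one can bound $\P(S_a < \delta a^2)$ by a Chernoff-type argument on $\sum_i \min(\tau_i, a^2)$ combined with the tail asymptotics $\P(\tau_i \geq k) \asymp k^{-1/2}$, which gives $\E[\min(\tau_i, a^2)] \asymp a$ and hence, by a lower-tail large deviation bound for sums of bounded nonnegative i.i.d. variables (e.g. via the Laplace transform $\E[e^{-\lambda \tau_i}] = 1 - c\sqrt{\lambda} + o(\sqrt\lambda)$, so $\E[e^{-\lambda S_a}] = (1 - c\sqrt\lambda + \cdots)^a$, and optimizing $e^{\lambda \delta a^2}\E[e^{-\lambda S_a}]$ over $\lambda \asymp a^{-2}$), a bound of the form $\P(S_a \leq \delta a^2) \leq \exp(-c'/\sqrt\delta)$ uniformly in $a$ — even better than $a^{-1}$. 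The main obstacle is exactly this lower-tail estimate for $S_a$: one must handle the heavy-tailed summands correctly (they help make $S_a$ large, not small, but the bounded-below constraint $\tau_i \geq 1$ means there is no subtlety at the bottom) and track constants so that, combined with the $a^{-3}$ term from the large-$S_a$ regime, everything fits into the stated $Ca^{-1} + o(a^{-3})$ bound. In fact once the exponential bound on $\P(S_a \leq \delta a^2)$ is in hand, that contribution is negligible and the dominant term is the $a^{-3}$ piece, so the claimed $Ca^{-1}$ is a comfortable over-estimate; I would present it in the stated form for consistency with how it is used in Lemma \ref{l.law_overshoot}.
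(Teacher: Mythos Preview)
Your decomposition breaks at the key step. You correctly recognise that you need $\P(S_a < \delta a^2) = O(a^{-1})$, derive via the Laplace transform a bound of the form $\P(S_a \leq \delta a^2) \leq \exp(-c'/\sqrt{\delta})$ (the correct exponent is $-c'/\delta$, but this is immaterial), and then assert that this is ``even better than $a^{-1}$''. It is not: for any fixed $\delta>0$ this bound is a positive constant independent of $a$. Indeed it cannot decay in $a$, since $S_a/a^2$ converges in law to a positive $\tfrac12$-stable variable $\xi$ and hence $\P(S_a < \delta a^2) \to \P(\xi < \delta) > 0$. Your split therefore yields only a constant upper bound on $\P(O^s \geq S_a)$, not $Ca^{-1}$.

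The paper sidesteps this by conditioning on $O^s$ rather than on $S_a$. Representing $S_a$ as the first time a simple random walk $X$ hits $-a$, one has $\{S_a \leq l\} = \{\min_{\cro{0,l}} X < -a\}$, so that $\P(O^s \geq S_a) \leq C\sum_{l \geq 1} \P(\min_{\cro{0,l}} X < -a)\, l^{-3/2}$. The sum over $l > a^2$ is at most $\sum_{l > a^2} l^{-3/2} = O(a^{-1})$; for $l \leq a^2$ the reflection principle and Hoeffding give $\P(\min_{\cro{0,l}} X < -a) \leq 2e^{-a^2/(2l)}$, and the resulting sum is again $O(a^{-1})$ after the substitution $l = a^2 u$. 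Your route can be repaired either by dropping the split and directly bounding $\E[S_a^{-3/2}]$ (the explicit hitting-time asymptotic $\P(S_a = j) \asymp a\,j^{-3/2}e^{-a^2/(2j)}$ gives $\E[S_a^{-3/2}] = O(a^{-3})$, in fact stronger than needed), or by letting $\delta = \delta(a) \to 0$ slowly enough that both pieces decay; but neither of these is what you wrote.
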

	Before proving the claim let us note that it implies the lemma because
	\begin{align*}
		\P(r^{(l)}-r^{(l-1)}\geq a \mid \mathcal F_{l-1})&\leq 	\P(r^{(l),-}-r^{(l-1)}\geq a \mid \mathcal F_{l-1}) + 	\P(r^{(l),
			+}-r^{(l-1)}\geq a \mid \mathcal F_{l-1})\\
		&\leq 2\P\left (O^s \geq \sum_{i=1}^a \tau_i\right ).
	\end{align*}
	
\end{proof}
As we are peeling all faces that have a vertex belonging to $\bb^{(l-1)}$, let us call $O_{-}^l$, resp. $O_+^l$ the furthest point of the boundary of $\qq^{(l)}$ that got discover to the left, resp. right.

We provide now the proof of the claim.
\begin{proof}[Proof of Claim \ref{c.real_q}]
	We start by constructing $\tau_i$ by considering a simple random walk $(X(k):k\in \N)$ started at $0$ and define $\tau_i=t_{-i}-t_{-(i-1)}$, where $t_i$ is the first time a random walk hits level $i$. We can now note that under this coupling the events $\{\sum_{i=1}^a \tau_i\leq j\}$ and $\{\inf_{k\in\cro{0,j}} X(k) < -a \}$ are equal. Thus, we have that 
	\begin{align*}
		\P\left (O^s\geq \sum_{i=1}^a \tau_i \right )&= \P\left (\inf_{k\in \cro{0,O^s}} X(k)< -a \right )\\
		&\leq C \sum_{l=1}^\infty \P\left(\inf_{k\in \cro{0,l}}X(k)< -a \right)l^{-3/2}.
	\end{align*} 
	We separate the last sum according to whether $l>a^2$ or $l\leq a^2$. The sum in the first case, i.e. for $l>a^2$, can be easily bounded.
	\begin{align*}
		\sum_{l=a^2+1}^\infty  \P\left (\inf_{j\in \cro{0,l}}X(k)< -a \right ) l^{-3/2} \leq \sum_{l=a^2+1}^\infty l^{-3/2}\leq Ca^{-1}.
	\end{align*}
	
	For the sum of the second case, i.e. for $l\leq a^2$, we use that for a simple random walk
	\begin{align*}
		\P\left (\inf_{k\in \cro{0,l}}X(k)< -a\right )&= \P\left (\sup_{k\in \cro{0,l}}X(k)> a\right ) =  2\P\left(X(l)> a \right).
	\end{align*}
	And that furthermore by the Hoeffding's inequality we have that
	\begin{align*}
		\P\left(X(l)\geq a \right)\leq \exp\left(-\frac{a^2}{2l}\right). 
	\end{align*}
	Thus, we can finally bound
	\begin{align*}
		\sum_{l=1}^{a^2}\P\left (\inf_{k\in \cro{0,l}}X(k)< -a \right ) l^{-3/2}&\leq 2 \sum_{l=1}^{a^2}\exp\left(-\frac{a^2}{2l}\right)l^{-3/2}\\
		&\leq 2\int_0^{a^2+1}\exp\left(-\frac{a^2}{2x}\right) x^{-3/2} dx + \sup_{x\geq 0} \left(\exp\left(-\frac{a^2}{2x}\right) x^{-3/2}\right)\\
		&\leq  \frac{2}{a}\int_0^{1+1/a^2}\exp\left(-\frac{1}{2u}\right)u^{-3/2}du+o(a^{-3}) \leq Ca^{-1}  + o(a^{-3})
	\end{align*}
	where in the second inequality we used the fact that the monotonicity of the function changes only once (at the point $x=a^2/3$)
\end{proof}

\begin{cor}
	The infinite discrete volume shocked map is not absolutely continuous with respect to the $\UIHPQ_S$ with simple boundary.
\end{cor}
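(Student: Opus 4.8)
The plan is to produce a single event which is null under the law of the $\UIHPQ$ (with its simple boundary, decorated by the boundary curve $\gb$) but of full measure under the law of the infinite discrete volume shocked map $(\rqq_\infty,\Gr_\infty)$; this gives (in fact much more than) the failure of absolute continuity. The separating event I would use is $A=\{(\mm,\gamma):\gamma\text{ is not injective}\}=\{(\mm,\gamma):\exists\,s\neq t,\ \gamma(s)=\gamma(t)\}$, where $\gamma$ denotes the decorating curve, realised as the unit-speed piecewise-linear interpolation of Remark \ref{cont_ext}.

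First I would check that $A$ is Borel for the Benjamini--Schramm Uniform topology on curve-decorated maps. Since $\gamma$ moves at unit speed out of $\gamma(0)=v_0$, one has $\gamma([-K,K])\subseteq\Ball{\mm}{K}$, so the truncation $\Tr{\mm,\gamma}{K}$ determines $\Ball{\mm}{K}$ together with the restriction $\gamma|_{[-K,K]}$; hence ``$\exists\,s\neq t$ in $[-K,K]$ with $\gamma(s)=\gamma(t)$'' is a function of $\Tr{\mm,\gamma}{K}$ (on discrete objects, a locally constant one), and $A$ is the countable union of these events over $K\in\N$. Equivalently, a piecewise-linear curve self-intersects iff it does so at a pair of rational times, so $A=\bigcup_{K\in\N}\bigcup_{q\neq r\in\Q\cap[-K,K]}\{\gamma(q)=\gamma(r)\}$.

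Next I would evaluate $\P(A)$ on both sides. By the very definition of a simple boundary, $\gb$ is a bijection of $\Z$ onto the pairwise distinct boundary vertices of the $\UIHPQ$ and traverses pairwise distinct edges, so its interpolation $\gb\colon\R\to\rqq^b_{\infty,\infty}$ is injective and $\P_{\mathrm{UIHPQ}}(A)=0$. On the other side, by construction $(\rqq_\infty,\Gr_\infty)=\overline{\phi}(\rqq^b_{\infty,\infty},\rtt_\infty)$ and $\Gr_\infty$ is the image under the gluing of the contour exploration $\gtt_\infty$ of the infinite critical geometric tree; since (as recorded in Section \ref{ss.tdm}) the gluing never identifies two distinct vertices of $\rtt_\infty$, while every spine vertex $\tau_{-j}$ with $j\geq 1$ has degree $\geq 2$ in $\rtt_\infty$ and is therefore visited at two distinct integer times by $\gtt_\infty$, the curve $\Gr_\infty$ is not injective, whence $\P_{\mathrm{ITQ}}(A)=1$. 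Since $A$ is Borel and $\P_{\mathrm{UIHPQ}}(A)=0<1=\P_{\mathrm{ITQ}}(A)$, the law of the infinite discrete volume shocked map is not absolutely continuous with respect to the law of the $\UIHPQ$ (and, as $\P_{\mathrm{UIHPQ}}(A^{c})=1>0=\P_{\mathrm{ITQ}}(A^{c})$, the two laws are in fact mutually singular).

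I do not anticipate a genuine obstacle here: the only point that really needs to be pinned down is the common measurable space on which the two laws are compared, namely curve-decorated maps with the Benjamini--Schramm Uniform topology, the $\UIHPQ$ carrying its boundary curve $\gb$ and the ITQ carrying its contour curve $\Gr_\infty$; after that the argument above is immediate. A quantitative variant could instead be built on Proposition \ref{p.max_jump}, exploiting that the decoration of the ITQ contracts --- a piece $\ttm_\infty(n)$ of ``size'' of order $\ell$ has diameter $o(\sqrt\ell)$ --- whereas a boundary segment of length $\ell$ of the $\UIHPQ$ has diameter of order $\sqrt\ell$; but the non-injectivity event is by far the cleanest way to separate the two laws.
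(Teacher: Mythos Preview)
Your argument is correct and takes a genuinely different route from the paper's proof. You separate the two laws by a purely combinatorial event on the decorating curve: the boundary curve of the $\UIHPQ$ is injective (simple boundary), whereas the contour curve $\Gr_\infty$ of the ITQ is never injective (every spine vertex is visited at least twice). This is clean and requires nothing beyond the definitions; measurability is unproblematic for the reason you give.

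The paper proceeds instead through the quantitative machinery it has just developed. It invokes the $\sqrt{n}$ scaling of boundary distances in the $\UIHPQ$ (from \cite{CC18}) and the lower bound of Proposition~\ref{p.max_jump}, writing $\dd_{\rqq^b_{\infty,\infty}}(0,n)\lesssim\sqrt{n}\ll n/\log(n)^\alpha\lesssim \dd_{\rqq_\infty^{\textsf{T},\infty}}(0,n)$; here ``$n$'' on the ITQ side refers to the spine vertex at depth $n$, not the contour curve at time $n$. This is a metric separation rather than a structural one: it shows that the singularity is already visible in how distances grow along the decoration. Your parenthetical ``quantitative variant'' goes in the complementary direction (using the \emph{upper} bound $\mathrm{diam}(\ttm_\infty(n))\leq\epsilon n$ to say that a stretch of contour of length $\ell\sim n^2$ has diameter $o(\sqrt{\ell})$ in the ITQ, against $\Theta(\sqrt{\ell})$ for a $\UIHPQ$ boundary segment), which would also work. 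What the paper's approach buys is that the corollary becomes a genuine application of Proposition~\ref{p.max_jump}; what your approach buys is that nothing at all is needed once the common measurable space (curve-decorated maps in the Benjamini--Schramm Uniform topology) is fixed.
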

\begin{proof}
	To show this it is enough to study the distances in the boundary. On one side we know from Prop 6.1 \cite{CC18} that distances from the root to the boundary point labeled $n$ on the boundary of the $\UIHPQ_S$ scale as $\sqrt{n}$ and from \Cref{p.max_jump} in the discrete volume shocked map we know that for any $\alpha>1$
	\begin{align}
		\dd_{\qq_{\infty,\infty}^{b}}(0,n)< \mathfrak{c}\sqrt{n}<\frac{n}{\log(n)^\alpha} < \dd_{\qq_{\infty}^{\textsf{T},\infty}}(0,n)
	\end{align}
	From this we conclude.
\end{proof}

\section{Finite volume for the discrete map}\label{s.f_volume_d_m}

The objective of this section is to transfer Proposition \ref{p.max_jump} to the case of finite volume. To prove this, we first note that the event that any two different points on the tree are at positive distance on the ITQ, only depends on the tree itself and on the behaviour of the quadrangulation with a simple boundary at close distance from the boundary.  Then we use the techniques of \cite{BCFS} to prove that close to the tree, the behaviour of both the tree and the quadrangulation with a simple boundary are not so different from their infinite counterpart. 

\subsection{"Typical case for finite volume uniform tree is not-unlikely for infinite volume uniform tree"} \label{ss.tree}In this subsection, we construct an exploration of a uniformly chosen tree and we show that the result of this exploration is not unlikely to be seen in the infinite critical geometric tree.

Take $\rtt$ a tree of size $k$ and $(C_k(i))_{i\in \cro{0,2k}}$ its contour function. We mark the vertex $\bar v$ as the vertex visited at time $k$ by the contour function. For any $m\in \N$, we define $\rtt^{(m)}$ as follows. If $d_\rtt(0,\bar v)\leq m$, $\rtt^{(m)}$ is equal to $\rtt$. However if $d_\rtt(0,\bar v)>m$, then we take $\bar \rtt^{(m)}$ the subtree defined as the connected component of $\overline{v}$ in  $\rtt \backslash \Ball{\rtt,0}{m}$  rooted at the unique edge where this connected component is attached to $\Ball{\rtt,0}{m}$. We define $\rtt^{(m)}$ as the (marked and rooted) tree generated by the vertices on $\rtt \backslash \bar \rtt^{(m)}$, marked on the corner to where $\bar \rtt^{(m)}$ is attached and rooted in the same (oriented) edge as the $\rtt$ was. This finally allows us to define for $j\leq k$ the tree $\rtt_{j}$ as the tree $\rtt^{(\tilde m)}$ where $\tilde m$ is the first $m$ such that $\rtt^{(m)}$ has size greater than or equal to $j$.

In fact, it is possible to compute the probability of $\rtt_{j}$. This is given in the following lemma.

\begin{lemma} For any $j,k\geq0 $, take $\tt$ a random uniform tree of size $k\in \N$. We have that
	\begin{align*}
		\P\left( \rtt_j=t_j \right)= \frac{\frac{1}{k-r_j+1}\binom{2k-2r_j}{k-r_j}}{\frac{1}{k+1}\binom{2k}{k} },
	\end{align*}
	where $r_j\geq j$ is the size of $t_j$, and $t_j$ is a marked rooted tree such that the size of $(t_j)^{(m-1)}$ is less than or equal to $j$, where the exploration goes to the base point of the marked corner.
\end{lemma}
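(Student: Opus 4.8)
The plan is to compute $\P(\rtt_j = t_j)$ by relating the exploration of $\tt$ to a bijective decomposition of the uniform tree into an ``explored part'' and an ``unexplored part'', and then counting. First I would observe that the exploration defining $\rtt_j$ is a stopping-time-type procedure on the contour function: we grow balls $\Ball{\rtt,0}{m}$ around the root until the subtree hanging off the marked vertex $\bar v$ (the vertex visited at time $k$) has been cut down to a piece of size making $\rtt^{(m)}$ reach size $\geq j$. The key structural fact is that $\rtt^{(\tilde m)} = t_j$ together with the unexplored subtree $\bar\rtt^{(\tilde m)}$ completely determines $\tt$: one reconstructs $\tt$ by grafting the unexplored tree back onto the marked corner of $t_j$. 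Hence
\[
\P(\rtt_j = t_j) = \frac{\#\{\tt \in T_k : \rtt_j(\tt) = t_j\}}{\#T_k} = \frac{\#\{\text{valid unexplored subtrees}\}}{\frac{1}{k+1}\binom{2k}{k}},
\]
using the Catalan count $\#T_k = \frac{1}{k+1}\binom{2k}{k}$.

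Next I would identify precisely which trees $\tt$ satisfy $\rtt_j(\tt) = t_j$. Write $r_j$ for the size of $t_j$; the grafted unexplored subtree must have exactly $k - r_j$ edges, and it is an \emph{arbitrary} plane tree of that size rooted at the grafting edge — the only constraint is that $t_j$ itself be a legitimate output of the exploration, i.e. that $(t_j)^{(m-1)}$ has size $\leq j$ (so the exploration really does stop at step $\tilde m$ producing $t_j$ and not earlier, and that the marked vertex/corner is the attachment point of the unexplored part, consistent with $\bar v$ being visited at contour-time $k$). Under these hypotheses on $t_j$, the map $\text{(plane tree of size }k-r_j) \mapsto \tt$ is a bijection onto $\{\tt : \rtt_j(\tt) = t_j\}$, so the numerator is simply $\#T_{k-r_j} = \frac{1}{k-r_j+1}\binom{2k-2r_j}{k-r_j}$, which gives the claimed formula. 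I would justify the bijectivity by exhibiting the inverse explicitly: given $\tt$ with $\rtt_j(\tt)=t_j$, the unexplored subtree $\bar\rtt^{(\tilde m)}$ is canonically recovered as the component of $\bar v$ after removing the appropriate ball, and conversely grafting any size-$(k-r_j)$ tree at the marked corner of $t_j$ produces a tree whose exploration stops exactly at $t_j$ precisely because of the size condition $|(t_j)^{(m-1)}| \leq j < |t_j| = r_j$ (the ball radius $\tilde m$ is intrinsic to $t_j$ and unaffected by what is grafted beyond it, and the marked vertex always lands at contour-time $k$ because the explored portion on the ``root side'' of the marked corner has a fixed number of edges determined by $t_j$).

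The main obstacle I expect is the bookkeeping needed to verify that the grafting really is a clean bijection — in particular, checking (a) that the radius $\tilde m$ at which the exploration halts depends only on $t_j$ and not on the grafted tail (this uses that removing $\Ball{\rtt,0}{m}$ for $m < \tilde m$ already separates $\bar v$ into a component of size $> k - j$, a property visible within $t_j$), and (b) that the marked vertex $\bar v$ being the one visited at contour-time exactly $k$ is automatically consistent for every grafted tail, which amounts to a careful count of corners/edges traversed by the contour before reaching the marked corner. Once these two points are pinned down, the probability is just a ratio of Catalan numbers and no further computation is needed. A secondary, more cosmetic point is to handle the degenerate case $d_\rtt(0,\bar v)\leq m$ uniformly, i.e. when $\rtt_j = \rtt$ and $r_j = k$; then $k - r_j = 0$, the grafted tree is the trivial one-vertex tree, $\#T_0 = 1$, and the formula reads $\P(\rtt_j = t_j) = 1/\big(\frac{1}{k+1}\binom{2k}{k}\big)$ for each full tree $t_j$, consistent with uniformity, so no separate argument is needed.
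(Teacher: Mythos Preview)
Your proposal is correct and follows essentially the same approach as the paper: decompose $\tt$ bijectively into the explored marked tree $t_j$ and an arbitrary plane tree of size $k-r_j$ grafted at the marked corner, then take the ratio of Catalan numbers. The paper's proof is a terse three-line version of exactly this argument (phrased in terms of inserting the Dyck path of the unexplored tree at the location of the marked corner); your write-up is more explicit about the bijectivity checks (a) and (b), which the paper leaves implicit, but there is no difference in strategy.
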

\begin{proof}
	We analyse the probability of the Dyck path associated to $\rtt$, for this consider the Dyck path associated to $\rtt_j$ and notice that from the marked corner on it we can identify the place where to insert the Dyck path of unexplored tree with size $k-r_j$. To conclude we use that the number of plane trees with $n$ edges are counted by the $n$-th Catalan number.
\end{proof}

For the next lemma, we need to explore the infinite critical geometric tree $\rtt$. In this case, to define $\rtt_j$, we only need to define $\bar \rtt^{(m)}$, this is done by taking the (unique) infinite connected component of $\rtt \backslash \Ball{\rtt,0}{m}$. We can now prove that the finite volume exploration is not-unlikely for the infinite volume one.
\begin{lemma}\label{l.nuTree}
	Take $\P_{k,j}$ (resp. $\P_{\infty,j}$) the law of $\rtt_{j}$ where $\rtt$ is a tree with size $k$ (resp. an infinite critical geometric tree). Then for any $\epsilon,\delta>0$ and $k\in \N$, there exists a set of trees $T^k:=T^{k,\epsilon,\delta}$ and a deterministic constant $K_{\epsilon,\delta}^T$ such that
	\begin{align}\label{c.prob_tree}
		\P_{k,(1-\epsilon) k}\left(\rtt_{(1-\epsilon) k} \notin T^k \right) \leq \delta, 
	\end{align}
	and such that for any $t\in T^k$ we have that
	\begin{align}\label{c.RN_tree}
		\dfrac{\P_{k,(1-\epsilon) k}(t)}{\P_{\infty,(1-\epsilon)k}(t)}\leq K_{\epsilon,\delta}^T.
	\end{align}
\end{lemma}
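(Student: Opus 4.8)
The plan is to compare the two laws via the explicit formula for $\P_{k,j}$ from the previous lemma and the analogous description for the infinite tree, and to show that the Radon–Nikodym derivative is bounded on a high-probability set. First I would record what the denominator looks like in the infinite case: since $\rtt$ is the infinite critical geometric tree, the event $\{\rtt_j = t_j\}$ only prescribes a finite marked rooted tree $t_j$ to which an \emph{infinite} one-ended tree is attached at the marked corner; by the description in \Cref{p.description_tree} (or equivalently by the Lukasiewicz/contour random-walk coding of \Cref{prop:rwdes}), the probability $\P_{\infty,j}(t_j)$ factorizes as the probability that the first $r_j$ exploration steps produce exactly $t_j$ and that the remaining tree survives, which is a product of geometric GW step-probabilities. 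Using the bijection between plane trees and Dyck paths exactly as in the proof of the preceding lemma, this is
\[
\P_{\infty,j}(t_j) \;=\; C(t_j)\cdot \rho(r_j),
\]
where $C(t_j)$ is the same combinatorial weight picked out by $t_j$ that also appears in $\P_{k,j}(t_j)$, and $\rho(r_j)$ is the survival factor, of order a constant.

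Dividing, the $C(t_j)$ cancels and one is left with
\[
\frac{\P_{k,(1-\epsilon)k}(t_j)}{\P_{\infty,(1-\epsilon)k}(t_j)}
\;=\;\frac{1}{\rho(r_j)}\cdot\frac{\frac{1}{k-r_j+1}\binom{2k-2r_j}{k-r_j}}{\frac{1}{k+1}\binom{2k}{k}},
\]
which depends on $t_j$ \emph{only through its size} $r_j$. So the whole problem reduces to controlling the ratio of Catalan numbers $\mathrm{Cat}(k-r_j)/\mathrm{Cat}(k)$ as a function of $r_j$, where we know $r_j\geq j=(1-\epsilon)k$. By Stirling, $\mathrm{Cat}(n)\asymp 4^n n^{-3/2}$, so this ratio is of order $4^{-(r_j)}\,(k-r_j)^{-3/2} k^{3/2} \cdot 4^{\,r_j}$... hmm, more carefully $\mathrm{Cat}(k-r_j)/\mathrm{Cat}(k)\asymp 4^{-r_j}\bigl(k/(k-r_j)\bigr)^{3/2}$, which blows up as $k-r_j\to 0$. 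Therefore the Radon–Nikodym derivative is \emph{not} bounded uniformly over all $t_j$: it is large precisely when the unexplored part $\bar\rtt^{(m)}$ is tiny, i.e. when the finite tree happened to be almost entirely explored after reaching radius $\tilde m$. This is exactly what the high-probability set $T^k$ must exclude.

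So the definition to make is: fix $\delta>0$ and choose a constant $c=c(\epsilon,\delta)>0$, then set
\[
T^k \;=\; T^{k,\epsilon,\delta} \;:=\; \bigl\{\, t_j : (1-\epsilon)k \le r_j \le (1-c)k \,\bigr\},
\]
i.e. we keep only those explorations in which the unexplored remainder still has at least $ck$ edges. On this set the Catalan ratio is bounded — $\mathrm{Cat}(k-r_j)/\mathrm{Cat}(k)\le 4^{-(1-\epsilon)k}(1/c)^{3/2}\cdot(\text{const})$, and since $4^{-(1-\epsilon)k}$ is exponentially small this gives a bound $K^T_{\epsilon,\delta}$ that is in fact much smaller than $1$ (absolute continuity with a small constant, which is all we need). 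It remains to verify \eqref{c.prob_tree}, i.e. that under $\P_{k,(1-\epsilon)k}$ the size $r_{(1-\epsilon)k}$ of the explored tree does not overshoot $(1-c)k$ except with probability $\le\delta$. This is the place where I expect the main work: $r_j$ exceeds $j$ by the number of extra edges swept in at the last radius increment $\tilde m$, and one must show this overshoot is $O_{\mathbb P}(k)$ with a small constant — equivalently, that a uniform tree of size $k$ does not place a macroscopic ($\ge c k$ edges) subtree hanging off a single corner at the critical depth $\tilde m$ with probability close to $1$. This follows from the ball-volume / subtree-size estimates for uniform (equivalently critical-geometric, conditioned) GW trees: the size of the descendant subtree of a uniformly chosen corner at height $\sim\sqrt k$ is $\Theta_{\mathbb P}(\sqrt k)$-or-smaller in the relevant regime, so choosing $c$ small makes $\P_{k,(1-\epsilon)k}(r_{(1-\epsilon)k}>(1-c)k)\le\delta$. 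With \eqref{c.prob_tree} and \eqref{c.RN_tree} in hand the lemma is proved.

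The main obstacle, then, is not the Radon–Nikodym computation itself (which is a clean Stirling estimate once the two probability formulas are written down) but pinning the overshoot estimate \eqref{c.prob_tree}: one needs a quantitative bound on how much the explored subtree can exceed its target size $j$ in a single depth-increment, uniformly in $k$. I would handle this either by a direct Catalan-number computation — summing $\P_{k,j}(r_j = r)$ over $r\in((1-c)k,k]$ using the explicit formula and Stirling, which shows this sum is $O(c^{?})$ uniformly in $k$ — or, more conceptually, by invoking the convergence of the rescaled uniform tree to the CRT (\Cref{t.CRT}) to argue that the mass lying beyond the exploration threshold is a continuous functional that stays bounded away from full mass. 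The Catalan computation is self-contained and I would favor it for a clean statement with an explicit $\delta$–$c$ relation.
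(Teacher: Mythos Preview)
Your overall architecture---take $T^k=\{t_j:(1-\epsilon)k\le r_j\le (1-c)k\}$, bound the Radon--Nikodym derivative via the Catalan asymptotics, and show the overshoot is $o(k)$---is exactly the paper's. But your computation of $\P_{\infty,j}(t_j)$ is wrong, and this propagates to a mistaken conclusion about the size of the bound.

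The ``survival factor'' $\rho(r_j)$ is \emph{not} of order a constant. In the random-walk coding of \Cref{prop:rwdes}, the event $\{\rtt_j=t_j\}$ prescribes the simple random walk on $2r_j$ steps, so $\P_{\infty,j}(t_j)=4^{-r_j}$; equivalently, take $k'\to\infty$ in $\P_{k+k',j}(t_j)=\mathrm{Cat}(k+k'-r_j)/\mathrm{Cat}(k+k')\to 4^{-r_j}$ using local convergence. Hence the ratio is
\[
\frac{\P_{k,(1-\epsilon)k}(t_j)}{\P_{\infty,(1-\epsilon)k}(t_j)}
\;=\;\frac{\mathrm{Cat}(k-r_j)/\mathrm{Cat}(k)}{4^{-r_j}}
\;\asymp\;\Bigl(\tfrac{k}{k-r_j}\Bigr)^{3/2}\;\le\;c^{-3/2},
\]
a constant larger than $1$, not exponentially small. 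Your claim that the bound is ``much smaller than $1$'' should have been a red flag: the Radon--Nikodym derivative integrates to $1$ under $\P_{\infty,j}$, so it cannot be uniformly $o(1)$ on a set carrying almost all the mass. The paper sidesteps the explicit computation of $\P_{\infty,j}$ altogether by bounding $\P_{k,j}(t)/\P_{k+k',j}(t)$ uniformly in $k'$ and then letting $k'\to\infty$; the $4^{-r_j}$ factors cancel automatically in that ratio, which is why the issue never arises there.

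For \eqref{c.prob_tree} the paper takes your ``conceptual'' route: it reads the exploration on the rescaled contour function $\widetilde C_k$, passes to the Brownian excursion limit via \eqref{Donsker}, and uses monotonicity/c\`adl\`ag properties of the explored-length functional $\hat L$ to transfer the bound $\P(L(\mathbbm e,1-\epsilon/2)<1-\gamma)\ge 1-\delta$ back to finite $k$. Your preferred direct Catalan summation would also work and gives an explicit $c$--$\delta$ relation, but the paper's argument is what is actually written.
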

\begin{proof}
	We define $T^k$ to be the set of trees with size bigger than or equal to $(1-\epsilon)k$ and smaller than or equal to $(1-\gamma)k$, where $\gamma:= \gamma(\delta)$ is a parameter to be tuned, meaning that $\rtt_{(1-\varepsilon)k}\in T^k$ leaves a macroscopic part of $\rtt$ to be explored.
	We prove that \eqref{c.prob_tree} is satisfied, by studying the continuous limit of the trees. From the definition of $\rtt_{(1-\epsilon)k}$, we see that it has size bigger than or equal to $(1-\epsilon)k$, so we just need to prove that,  with high probability, it has size smaller than or equal to $(1-\gamma)k$. Consider the contour function $C_k$ of the tree $\rtt_k$ into play, we know from Theorem 2.5 in \cite{LG05} that, for the topology of uniform convergence
	\begin{align}\label{Donsker}
		\widetilde{C}_k =\left( \frac{C_k(2kt)}{\sqrt{2k}} : t\in [0,1]\right) \xrightarrow[k\rightarrow\infty]{law} (\mathbbm{e}_t :  t  \in [0,1]),
	\end{align}
	where $(\mathbbm{e}_t :  t  \in [0,1])$ is a standard Brownian excursion.
	Let us now describe how to read $\rtt_j$, the filled-in exploration of the tree with target point $\overline{v}$  using $\widetilde{C}_k$, to do that define $j_C:= \frac{j}{2k}$. To construct, $\rtt^{(m)}$, the exploration of the ball of radius $m$ in the tree of size $k$, we expose the values of the function in the set $\widetilde{C}_k^{-1}([0,\sqrt{2k} m]) \subseteq [0,1]$ and then we expose the values of $\widetilde C$ in all connected components of $[0,1]\backslash\widetilde{C}_k^{-1}([0,\sqrt{2k} m])$ that do not contain the point $1/2$, we mark the corner where the unseen interval should be connected. Denote $\hat L(\widetilde{C},m)$, one minus the length of that unexplored interval, take $m_{j}(\widetilde C)$ the infimum over $m\in \R$ such that $\hat L(\widetilde{C},m) >j_C$, we define $L(\widetilde{C}, j_C)=\hat L(\widetilde{C},m_j(\widetilde{C}))\geq j_C$. Note that this construction can be done for any renormalised contour function $\widetilde{C}$ and any $j_C\in [0,1]$. Finally, we remark that for any renormalised contour function $\widetilde{C}$ the function $\hat L(\widetilde{C},\cdot)$ is increasing and c\`adl\`ag.
	
	Thanks to Skorohod's representation theorem, we may assume that we work on a probability space where the convergence \eqref{Donsker} holds almost surely. The fact that $\hat L(\widetilde{C},\cdot)$ is increasing and c\`adl\`ag implies that in this coupling for any $m\in \R$
	\begin{align*}
		\lim_{l\nearrow m}\hat L(\mathbbm{e},l) \leq \liminf_{ k \to \infty} \hat L(\widetilde{C}_k,m) \leq  \limsup_{k\to \infty} \hat L(\widetilde{C}_k,m) \leq \hat L(\mathbbm{e},m).
	\end{align*}
	This implies that $0<j_C<1$
	\begin{align}\label{ineq:length}
		L(\mathbbm{e},j_C) \leq \liminf_{ k \to \infty} L(\widetilde{C}_k,j_C) \leq  \limsup_{k\to \infty} L(\widetilde{C}_k,j_C) \leq \lim_{\ell\searrow j_C}L(\mathbbm{e},\ell).
	\end{align}
	
	Now, notice that given $\delta>0$ one can find $\gamma>0$ such that $\P(L(\mathbbm{e},1-\epsilon/2)<1-\gamma) \geq 1-\delta$. But thanks to \eqref{ineq:length} one finds that for $k$ big enough has that 
	$\P(L(\widetilde C_k,1-\epsilon)<1-\gamma) \geq 1-\delta$, which let us conclude that \eqref{c.prob_tree} holds when one takes
	\begin{align*}
		T_k:=\{\rtt: \rtt \text{ has } k \text{ vertices and } \rtt_{(1-\epsilon)k} \text{ has at most $(1-\gamma)k$ vertices}\}.
	\end{align*}.
	
	For the second part take $k'\in \N$, we use \Cref{l.nuTree} and note that if $t$ has positive probability for $\P_{k,(1-\epsilon)k}$, then it has positive probability for any $\P_{k+k', (1-\epsilon) k}$. Then we use the fact that the $n$-th Catalan number behaves asymptotically like $4^n n^{-3/2}\pi^{-1/2}$ as $n\to \infty$, and we obtain 
	\[
	\dfrac{\P_{k,(1-\epsilon) k}(t)}{\P_{k+k',(1-\epsilon)k}(t)} \approx \left(\frac{k}{k-|t|}\right)^{3/2} 4^{-|t|}\times \left(\frac{k+k'}{k+k'-|t|}\right)^{-3/2}4^{|t|}\leq  \gamma^{-3/2}.
	\]
	Since this bound is independent of $k'$, we obtain the result by taking $k'\to\infty$.
\end{proof}

\subsection{"Typical case for quadrangulation with a boundary is not unlikely for the $\UIHPQ$"}\label{ss.quad}

The idea now, is to take a big subset of the boundary on a quadrangulation with a simple boundary and show that a small ball centred around this subset in also not unlikely to happen on a $\UIHPQ$. An important caveat is that in order to use this result to study the ITQ, we need to obtain bounds that are uniform on the unexplored size of the boundary.

Now, let us take $a,b,f\in \N$, $\sigma>0$, 
\begin{align*}
	I_{\sigma,f}= I_{\sigma, f,a,b}:=\left [-\pi \frac{a}{\lfloor \sigma \sqrt{f} \rfloor},\pi \frac{b}{\lfloor \sigma \sqrt{f} \rfloor}\right ]
\end{align*}
and $(\qq_{f,\sigma \sqrt{f}},\gb)$ where $\qq_{f,\sigma \sqrt{f}}$ is a uniformly chosen quadrangulations with a simple boundary  of size $2 \lfloor \sigma \sqrt{f} \rfloor$ and $f$ internal faces and $\gb$ is the curve $\gb: \S^1 \to \partial \qq_{f,\sigma \sqrt{f}}$ that goes through the boundary at constant speed. We identify $I_{\sigma,f}$ with the portion of the boundary of $\gb(e^{i I_\sigma,f})$ and we denote by $r_0$ the radius of the smallest filled-in ball centred at the root vertex that contains $I_{\sigma,f}$. For $r>0$ define $\bar {I}^{r}_{\sigma,f}$ as the connected component\footnote{We set $\bar {I}^{r}_{\sigma,f}$ to be empty if the distance between $\gb(\pi)$ and $I_{\sigma,f}$ is less than or equal to $\epsilon$.} containing $\gb(e^{i\pi})$ in the complement of the ball centred at the root vertex and with radius $r_0+r$. We define $I_{\sigma, f}^{r}$ as the marked map $(\qq_{f,\sigma \sqrt{f}}\backslash \bar {I}^{r}_{\sigma,f},v_1, v_2 )$\footnote{Here we consider the suppression of elements of $\bar {I}^{r}_{\sigma,f}$ minus the boundary points; meaning that $I_{\sigma, f}^{r}\cap \bar {I}^{r}_{\sigma,f} = \partial\bar {I}^{r}_{\sigma,f}$}. Here, $v_j=\gb(e^{it_j})$, where $t_1$ (resp. $t_2$) are the bigger, (resp. smaller), $t$ such that $\gb(e^{it})\in  \bar {I}^{r}_{\sigma,f}$.

\begin{figure}[h!]
	\includegraphics[scale = 0.5]{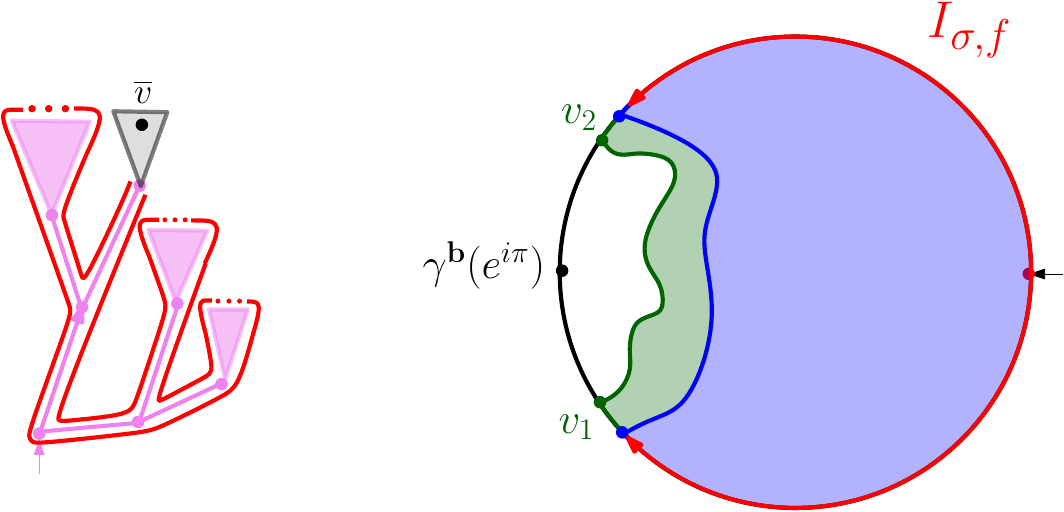}
	\caption{Explorations of \Cref{ss.tree} and \Cref{ss.quad}. \textbf{Left:} Filled-in exploration of the tree with target $\overline{v}$, where $\rtt_j$ is coloured in pink with its contour exploration in red. \textbf{Right:} Filled-in exploration of $I_{\sigma,f}$ with target point $\gb(e^{i\pi})$; the blue region represents the first filled-in ball covering $I_{\sigma,f}$ of radius $r_0$ and the green region is what is added to obtain the filled-in ball centred at the root vertex of radius $r_0+r$.}
\end{figure}

In this context, we need to be a little more careful to define $I_{\sigma,f}$ in the case when $f=\infty$, i.e., when taking a $\UIHPQ$ $(\qq_{\infty,\infty}, \gb)$ in that case 
\begin{align*}
	I_{\infty}:=I_{\infty,a,b}=\left[-a,b \right].
\end{align*}
In this case, we identify $I_{\infty}$ with the $\gb(I_{\infty})\subseteq\qq_{\infty, \infty}$. We define $I_{\infty}^r$ in an analogous way to the finite case.

For the following lemma we need to define two probability laws, $\Q_{f,\sigma, a,b,rf^{1/4}}$ and $\Q_{\infty,\infty,a,b,rf^{1/4}}$ as the law of $I_{\sigma,f}^{rf^{1/4}}$ and $I_{\infty}^{rf^{1/4}}$ respectively.	
\begin{lemma}\label{l.control_map_with_boundary}
	For every $\delta>0$ and $ 0<\eta < \sigma$ there exists $f_0\in \N$ and $r>0$ such that for all $f\geq f_0$ the following occurs.  Take $a,b\in \N$ with $a+b<(\sigma-\eta)\sqrt{f}$, there exists a set marked of maps $Q^{f}:=Q^{f,\eta,\delta, r}$ and a deterministic constant $K_{\delta,\eta}^b>0$ such that
	\begin{align}\label{c.prob}
		\Q_{f,\sigma,a,b,rf^{1/4}}\left (q\notin Q^f \right )\leq \delta,
	\end{align}
	and for any $q \in Q^f$
	\begin{align}\label{c.RN}
		\frac{\Q_{f,\sigma,a,b,rf^{1/4}}(q)}{\Q_{\infty,a,b,rf^{1/4}}(q)} \leq K_{\eta,\delta}^b
	\end{align}
\end{lemma}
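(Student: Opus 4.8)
The statement is an absolute‑continuity estimate of the kind proved in \cite{BCFS}, and I would follow that strategy. The idea is to realise the filled‑in metric ball around $\gb(e^{iI_{\sigma,f}})$ by a layered peeling exploration, use the spatial Markov property of uniform quadrangulations with a simple boundary (and of the $\UIHPQ$) to write the probability of a given explored configuration as a product of \emph{local} revealed‑face weights times a normalising partition function of the unexplored region, observe that the local weights agree in the finite model and in the $\UIHPQ$, and finally bound the resulting ratio of partition functions on the event that the unexplored perimeter and area stay in prescribed compact windows — an event of probability $\ge 1-\delta$ once $r$ is chosen small, by the Brownian‑disk scaling limit.

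\textbf{Step 1 (exploration and Markov property).} Fix the peeling exploration of $(\qq_{f,\sigma\sqrt f},\gb)$ started from the arc $\gb(e^{iI_{\sigma,f}})$ that fills in the metric ball, exactly of the type used in \Cref{s.id}: peel every face touching the current explored boundary, fill in the finite holes not containing the target $\gb(e^{i\pi})$, and iterate $\lfloor rf^{1/4}\rfloor$ steps; the explored region is then precisely $I_{\sigma,f}^{rf^{1/4}}$ (together with the removed piece $\bar I^{rf^{1/4}}_{\sigma,f}$). Conditionally on the explored configuration, the spatial Markov property says the unexplored region is a uniform quadrangulation with a simple boundary of the conditioned perimeter $2p$ and conditioned face count $f'$; the identical exploration on the $\UIHPQ$ leaves, conditionally, a $\UIHPQ$ of perimeter $2p$. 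Writing $Z_{n,p}$ for the number of quadrangulations with a simple boundary of perimeter $2p$ and $n$ internal faces, and $w(q)$ for the product of the Boltzmann‑type weights of the faces revealed in producing a configuration $q$, one gets $\Q_{f,\sigma,a,b,rf^{1/4}}(q)=w(q)\,Z_{f',p}\,/\,Z_{f,\lfloor\sigma\sqrt f\rfloor}$ in the finite model and $\Q_{\infty,a,b,rf^{1/4}}(q)=w(q)\,c_{p}$ in the $\UIHPQ$, where $w(q)$ is the \emph{same} in both models and $(c_p)_{p}$ is a fixed universal sequence coming from the $\UIHPQ$ peeling probabilities. Hence the ratio in \eqref{c.RN} equals $Z_{f',p}\,/\,(c_p\, Z_{f,\lfloor\sigma\sqrt f\rfloor})$, a deterministic function of $(f',p)$ alone.

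\textbf{Step 2 (partition‑function asymptotics and the good set).} Using the asymptotics of $Z_{n,p}$ in the regime $p\asymp\sqrt n$ that underlie \Cref{t.BD} (see \cite{BCFS,GM}), the function $Z_{f',p}/(c_p Z_{f,\lfloor\sigma\sqrt f\rfloor})$ is bounded above by a finite constant $K^b_{\eta,\delta}$, uniformly for $f\ge f_0$, as soon as $p/\sqrt f$ lies in a fixed compact subset of $(0,\infty)$ and $f'/f$ lies in a fixed compact subset of $(0,1]$. Now choose $r=r(\delta,\eta)>0$ small: by \Cref{t.BD}, after rescaling by $(9f/8)^{-1/4}$ the arc $\gb(e^{iI_{\sigma,f}})$ becomes a macroscopic boundary arc of the Brownian disk $\QQQ_{3\sigma}$ and the filled‑in ball of radius $rf^{1/4}$ becomes a ball of radius $\asymp r$ there, whose boundary‑length measure and area both tend to $0$ as $r\to0$. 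Therefore, for $r$ small and $f\ge f_0$, with probability at least $1-\delta$ the exploration removes at most $(\eta/2)\sqrt f$ from the perimeter and at most a fraction $\delta'$ of the faces and terminates before reaching $\gb(e^{i\pi})$ (so $\bar I^{rf^{1/4}}_{\sigma,f}\neq\varnothing$); on this event, using $a+b<(\sigma-\eta)\sqrt f$ so that the explored arc has perimeter $<(\sigma-\eta)\sqrt f$ and its complement has perimeter $\ge\eta\sqrt f$, one has $p/\sqrt f\in[\eta/2,\sigma+1]$ and $f'/f\in[1-\delta',1]$. Define $Q^f$ to be the set of explored configurations satisfying these constraints; then \eqref{c.prob} holds by construction and \eqref{c.RN} holds with the constant $K^b_{\eta,\delta}$ of the previous sentence.

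\textbf{Step 3 (uniformity) and main obstacle.} The constant $K^b_{\eta,\delta}$ depends only on the compact parameter window $p/\sqrt f\in[\eta/2,\sigma+1]$, $f'/f\in[1-\delta',1]$, and not on the location of the arc, since the only feature that enters is that its complementary perimeter is $\ge\eta\sqrt f$; this gives the required uniformity over all $a,b$ with $a+b<(\sigma-\eta)\sqrt f$. The two delicate points, where essentially all the work lies, are: (i) fixing a peeling whose filled‑in explored region is genuinely the metric ball and to which the spatial Markov property applies with the revealed‑face weights matching between the finite map and the $\UIHPQ$; and (ii) obtaining the \emph{uniform} (in $a,b$ and in $f\ge f_0$) boundedness of the partition‑function ratio together with the \emph{uniform} control that the ball of radius $rf^{1/4}$ removes only a small, bounded‑away‑from‑macroscopic amount of perimeter and area — this is a quantitative, location‑uniform version of the scaling estimates of \cite{BCFS}.
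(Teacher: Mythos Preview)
Your proposal is correct and follows essentially the same route as the paper: reduce via the spatial Markov property to a ratio depending only on the unexplored perimeter $p$ and area $f'$, define $Q^f$ by forcing $(p/\sqrt f,\,f'/f)$ into a compact window, bound the ratio there via the asymptotics of $q_{n,\ell}$, and show the window has probability $\ge 1-\delta$ for $r$ small.

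Two small differences are worth noting. First, instead of writing $\Q_{\infty,a,b,rf^{1/4}}(q)=w(q)\,c_p$ and bounding $Z_{f',p}/(c_p Z_{f,\lfloor\sigma\sqrt f\rfloor})$ directly, the paper compares $\Q_{f,\sigma,a,b,rf^{1/4}}(q)$ to $\Q_{f+f',\sigma,a,b,rf^{1/4}}(q)$, bounds this ratio uniformly in $f'$ using the explicit asymptotic $q_{n,\ell}\sim \tfrac{\sqrt3}{2\pi}12^n(9/2)^\ell n^{-5/2}\ell^{1/2}e^{-9\ell^2/4n}$, and then sends $f'\to\infty$ (using a diagonal convergence to the $\UIHPQ$). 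This sidesteps the need to identify your sequence $c_p$ explicitly; your formulation is fine but leaves that identification as an extra step. Second, for \eqref{c.prob} the paper does not pass through the Brownian‑disk limit as you do but simply invokes the perimeter/area estimates of \cite[Lemma~9]{BCFS}, noting that the present exploration is a fast‑forward of theirs. Your argument via \Cref{t.BD} would need the GHPU strengthening (which the paper records in the appendix) to control perimeter and area, so in substance both routes rest on the same \cite{BCFS} estimates.
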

\begin{proof}
	Define as $q_{f,\ell}$ as the cardinal of the set of quadrangulations with simple boundary of size $2\ell$ and $f$ internal faces. It is known \cite[Proof of Lemma 10.]{BCFS}
	\[
	q_{f,\ell}\sim \frac{\sqrt{3}}{2\pi}12^f\left(\frac{9}{2}\right)^\ell f^{-5/2}\ell^{1/2}\exp\left(-\frac{9\ell^2}{4f}\right)
	\]
	as $f$ and $\ell$ tend to infinity.
	
	We will study the filled-in exploration with target, meaning that we fill the connected components that do not contain the target point: the middle point of the perimeter. This is enough since the exploration without filling the holes is deterministic in the filled-in exploration.
	\begin{align*}
		\Q_{f,\sigma,a,b}(q) = \frac{q_{m,\ell}}{q_{f,\sigma\sqrt{f}}} 
	\end{align*}
	where $\ell:=\ell(q)= |\partial q_{in} |+ \lfloor\sigma\sqrt{f}\rfloor - |\partial q_{out}|$, where $\partial q_{out}$, resp. $\partial q_{in}$  is the segment of the boundary of $q$ written as $[v_1,v_2]\supseteq I$, resp. $(v_2,v_1)$ (i.e. that does not intersect $I$); and where $m:=m(q)$ is equal to $f$ minus the number of inner faces of $q$.
	
	We define \begin{align*}
		Q^f:=Q^{f,\eta,\delta, r} =  \left \{ q : \alpha \sqrt{f} \leq  \ell(q)\leq \frac{\sqrt{f}}{\alpha} , m(q)\geq \alpha f\right \},
	\end{align*}
	where $\alpha$ is a function of $\delta$, $\eta$ and $r$. In words, the event $Q^f$ is where the unexplored part has macroscopic size. 
	
	We need to prove properties \eqref{c.prob} and \eqref{c.RN}.  The fact that there exists $r>0$ such that property \eqref{c.prob} holds follows directly from Lemma 9 \cite{BCFS}, since our exploration is a fast-forward stage of the exploration used by them.
	
	We are left prove that property \eqref{c.RN} holds. Consider $q$ such that $q\in Q^f$, we claim that 
	\begin{align}\label{c.RN_cond}
		\frac{\Q_{f,\sigma,a,b,rf^{1/4}}(q)}{\Q_{\infty,a,b,rf^{1/4}}(q)} \leq K_{\epsilon,\delta}^b
	\end{align}
	Consider $f'\in \N$ (large enough) then by noting that if a marked map $q$ has positive probability for $\Q_{f,\sigma,a,b}$, then it also has positive probability for $\Q_{f+f',\sigma,a,b}$ we note that
	\begin{align}\label{e.RND_Q}
		&\frac{\Q_{f,\sigma,a,b,rf^{1/4}}(q)}{\Q_{f+f',\sigma,a,b,rf^{1/4}}(q)}\\
		\nonumber&= \frac{q_{f+f',\sigma\sqrt{f+f'}}}{q_{f,\sigma\sqrt{f}}}  \times \frac{q_{m,\ell}} {q_{m+f',\ell + \sigma(\sqrt{f+f'}-\sqrt{f})}}\\
		\nonumber&\left(\frac{f+f'}{f}\frac{m}{m+f'}\right)^{-5/2}\left(\frac{\sqrt{f+f'}}{\sqrt{f}}\frac{\ell}{\ell+\sigma\left(\sqrt{f+f'}-\sqrt{f}\right)}\right)^{1/2}\\
		\nonumber&\quad\times\exp\left(\frac{9}{4}\left(\frac{\left(\ell+\sigma\left(\sqrt{f+f'}-\sqrt{f}\right)\right)^2}{m+f'}-\frac{\ell^2}{m}\right)\right)\\
		\nonumber&\sim \left(\frac{f}{m}\right)^{5/2}\left(\frac{\ell}{\sigma\sqrt{f}}\right)^{1/2}\exp\left(\frac{9}{4}\left(\sigma^2-\frac{\ell^2}{m}\right)\right)\\
		\nonumber&\leq \left(\frac{f}{m}\right)^{5/2}\left(\frac{\ell}{\sigma\sqrt{f}}\right)^{1/2}\exp\left(\frac{9}{4}\sigma^2\right)\\
		\nonumber&\leq \alpha^{-3}\sigma^{-1/2}\exp\left(\frac{9}{4}\sigma^2\right)
	\end{align}

	Again this bound does not depend on $f'$, so taking the limit we obtain the result. 
	Here we used a ``diagonal'' version of the convergence to the $\UIHPQ$ with simple boundary when the area and the boundary tend to infinite simultaneously, with the boundary of order square root of the area . This ``diagonal'' version follows from Prop. 2.6 and Lemma 2.7 in \cite{GM}.
	
\end{proof}

\subsection{"Close to the tree a tree decorated quadrangulation is not unlikely for the ITQ"} Now, we use the results before to prove that high probability events that depend only on small neighbourhoods of a finite part of the tree in the ITQ also have high probability for a finite tree decorated quadrangulation.

\begin{prop}\label{p.TDM_small_diameter}
	Let $(\rqq_f, \rtq_\sigma)$ be a tree decorated quadrangulation where $\rqq_f$ has $f$ faces and the tree $\rtq_\sigma$ is of size $\sigma \sqrt f$, with $0<\sigma<\infty$. 
	For any $\alpha>1$ and $\epsilon>0$, we have that with high probability as $f\to \infty$
	\begin{align}\label{e.size_tree_finite}
		\frac{f^{1/4}}{(\log(f))^\alpha}\leq diam(\rtm_{\sigma})\leq \epsilon f^{1/4}.
	\end{align}
\end{prop}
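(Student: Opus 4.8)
The plan is to transfer Proposition~\ref{p.max_jump} (the infinite-volume estimate on $\operatorname{diam}(\ttm_\infty(n))$) to the finite-volume tree-decorated quadrangulation by means of the two absolute-continuity comparisons proved in Sections~\ref{ss.tree} and~\ref{ss.quad}. The key observation, as announced in the introduction of Section~\ref{s.f_volume_d_m}, is that the event
\[
E_n := \Big\{\tfrac{n}{(\log n)^\alpha}\le \operatorname{diam}(\rtm_\sigma)\le \epsilon\, n \Big\}
\]
after the rescaling $n = f^{1/4}$ depends only on: (i) the combinatorial structure of a macroscopic but proper portion of the decorating tree $\rtq_\sigma$, and (ii) the behaviour of the quadrangulation with simple boundary $\rqq^b_{f,\sigma\sqrt f}$ in a thin neighbourhood — of depth $O(f^{1/4})$ — of the corresponding portion of its boundary. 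This is because distances in the tree-decorated quadrangulation that realise $\operatorname{diam}(\rtm_\sigma)$ only use the glued tree and shortcuts through the quadrangulation near the identified boundary; a path leaving that neighbourhood of radius $r f^{1/4}$ already has length $\gtrsim r f^{1/4} \gg f^{1/4}/(\log f)^\alpha$ and is irrelevant for the lower bound, while the upper bound is immediate from Theorem~\ref{t.triviality-SM} / \Cref{cor2} as in the proof of Proposition~\ref{p.max_jump}.

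First I would fix $\alpha>1$ and $\epsilon>0$ and use Proposition~\ref{p.max_jump} to find that, for the ITQ, with probability $\ge 1-\delta/2$ the event $E^{\infty}$ — the analogue of $E_n$ with $\ttm_\infty(n)$, $n = f^{1/4}$, in place of $\rtm_\sigma$ — holds, and moreover (by the locally-finite structure of the ITQ and the peeling of Section~4.2) this event is measurable with respect to the pair $\big(\rtt_j,\ I_\infty^{rf^{1/4}}\big)$ for $j = \lfloor(1-\epsilon')k\rfloor$ with $k=\sigma\sqrt f$, $a+b$ bounded by the size of the explored spine, and $r$ large enough. Next I would apply \Cref{l.nuTree} with parameters $(\epsilon',\delta)$ and \Cref{l.control_map_with_boundary} with parameters $(\eta,\delta,r)$ to obtain the tree-set $T^k$ and map-set $Q^f$ with $\P_{k,(1-\epsilon')k}(\rtt_{(1-\epsilon')k}\notin T^k)\le\delta$, $\Q_{f,\sigma,a,b,rf^{1/4}}(q\notin Q^f)\le\delta$, and Radon–Nikodym bounds $K^T_{\epsilon',\delta}$, $K^b_{\eta,\delta}$. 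By independence of the tree and the quadrangulation (both in the discrete tree-decorated quadrangulation via the bijection of Section~\ref{ss.tdm} and in the ITQ via Section~\ref{ss.des_lim}), the joint law of $(\rtt_j, I^{rf^{1/4}}_{\sigma,f})$ under the finite model is absolutely continuous with respect to its law under the infinite model on the set $T^k\times Q^f$, with density bounded by $K^T_{\epsilon',\delta}K^b_{\eta,\delta}$.

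Then the transfer is routine: writing $\mathbb P^{\mathrm{fin}}$ and $\mathbb P^{\mathrm{ITQ}}$ for the two laws of the decorated-neighbourhood data, and letting $\mathcal E$ be the (measurable) event encoding $E_n$,
\[
\mathbb P^{\mathrm{fin}}\big((E_n)^c\big)
\le \mathbb P^{\mathrm{fin}}\big(T^k\times Q^f \text{ fails}\big) + K^T_{\epsilon',\delta}K^b_{\eta,\delta}\,\mathbb P^{\mathrm{ITQ}}\big((E^\infty)^c\big)
\le 2\delta + K^T_{\epsilon',\delta}K^b_{\eta,\delta}\cdot \tfrac{\delta}{2},
\]
for $f$ large; since $\delta>0$ is arbitrary, $\mathbb P^{\mathrm{fin}}(E_n)\to 1$, which is exactly \eqref{e.size_tree_finite}. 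The upper bound $\operatorname{diam}(\rtm_\sigma)\le\epsilon f^{1/4}$ can also be argued directly from Theorem~\ref{t.triviality-SM}, since discrete distances are dominated by the continuum ones and the diameter of $\Stm$ is zero, so only the lower bound genuinely needs the infinite-volume peeling estimate.

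The main obstacle I expect is the \emph{bookkeeping of scales and the measurability claim}: one must choose $\epsilon'$, $\eta$, and $r$ (all functions of $\alpha,\epsilon,\delta$) so that, simultaneously, (a) the exploration $\rtt_{(1-\epsilon')k}$ captures the entire subtree $\ttm_\infty(f^{1/4})$ with high probability — this uses that $f^{1/4}\ll\sqrt f$ so that a ball of radius $f^{1/4}$ in the tree occupies only $o(k)$ edges; (b) the radius-$rf^{1/4}$ filled-in ball around $I_{\sigma,f}$ really does contain every quadrangulation-shortcut that could affect $\operatorname{diam}(\rtm_\sigma)$, which requires $r$ large but fixed, independent of $f$, and uses the one-endedness and the Markov/peeling structure; and (c) the constraint $a+b<(\sigma-\eta)\sqrt f$ of \Cref{l.control_map_with_boundary} is met, i.e. the portion of boundary one needs to control is bounded away from the whole perimeter — again guaranteed by $f^{1/4}=o(\sqrt f)$. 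Making precise that $E_n$ is $\sigma(\rtt_j, I^{rf^{1/4}}_{\sigma,f})$-measurable — equivalently, that the peeling of \Cref{l.ball_ss_pelota} applied to the finite model only reveals data inside $\rtt_j$ and $I^{rf^{1/4}}_{\sigma,f}$ before it has determined $B_{f^{1/4}}(\mathsf M)\cap\rtm_\sigma$ — is the technical heart and will require carefully matching the discrete peeling of Section~4.2 to the explorations of Sections~\ref{ss.tree}–\ref{ss.quad}.
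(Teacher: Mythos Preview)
Your approach is essentially the paper's: localize the diameter event to a neighbourhood of a macroscopic portion of the tree, then use the Radon--Nikodym bounds of Lemmas~\ref{l.nuTree} and~\ref{l.control_map_with_boundary} to transfer Proposition~\ref{p.max_jump} from the ITQ, via an inequality of exactly the form $2\delta + K^T_{\epsilon',\delta}K^b_{\eta,\delta}\,\P^{\mathrm{ITQ}}((E^\infty)^c)$.

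The one genuine gap is in the localization step. You assert that the \emph{full}-diameter event $E_n=\{\operatorname{diam}(\rtm_\sigma)\in[\cdots]\}$ is measurable with respect to the partial data $(\rtt_{(1-\epsilon')k},\,I^{rf^{1/4}}_{\sigma,f})$, but for the upper bound this cannot hold: the unexplored $\epsilon' k$ edges of the tree are invisible to this $\sigma$-algebra and could a priori contribute to the diameter. The paper does not try to make $E_n$ local. Instead it replaces $E_n$ by the event $E(f)$ bounding the diameter of only the \emph{explored} subtree $(\rtm_\sigma)_{(3/4)\sigma\sqrt f}$ between $f^{1/4}/(\log f)^\alpha$ and $(\epsilon/2)f^{1/4}$; this event genuinely lives on an $(\epsilon/2)f^{1/4}$-neighbourhood of that subtree and hence on the pair $(I_{\sigma,f}^{\epsilon f^{1/4}},\,(\rtt_{\sigma\sqrt f})_{(3/4)\sigma\sqrt f})$. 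The passage from $E(f)$ back to the full bound~\eqref{e.size_tree_finite} is then a separate short argument: the lower bound is immediate since the partial tree sits inside the full one, and for the upper bound one invokes \emph{rerooting invariance} of the uniform tree-decorated quadrangulation together with the triangle inequality, so that the remaining portion of the tree is covered by a second application of $E(f)$ from a different root. This rerooting step is the piece missing from your outline; your items (a)--(c) address the scale-matching but not this issue. Once it is in place, the final comparison of $(\rtm_\infty)_{(3/4)\sigma\sqrt f}$ with the spine-based subtrees $\rtm_\infty(C^{\pm1}f^{1/4})$ needed to feed Proposition~\ref{p.max_jump} is exactly as you describe.
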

\begin{proof}
	Let us first look at the diameter of the exploration tree $(\rtm_{\sigma})_{\frac{3}{4}\sigma \sqrt{f}}$, with the notation of Subsection \ref{ss.tree}; i.e. the first tree of size bigger than $\frac{3}{4}\sigma\sqrt{f}$ in the filled-in exploration of the tree $\rtm_{\sigma}$. We define the event $E(f)$ as
	\begin{align}\label{e.size_finite_tree}
		\frac{f^{1/4}}{(\log(f))^\alpha}\leq diam(\rtm_{\sigma})_{\frac{3}{4}\sigma                \sqrt{f}}\leq \frac{\epsilon}{2} f^{1/4}, 
	\end{align}
	We note that if with high probability $E(f)$ holds, then we can conclude, by triangular inequality and the rerooting invariance, that \eqref{e.size_tree_finite} also holds. We note that the event $E(f)$ only depends on an $\frac{\epsilon}{2} f^{1/4}$ neighbourhood of $(\rtm_{\sigma})_{\frac{3}{4}\sigma \sqrt{f}}$. Using the bijection, to an independent pair $(\qq^b_f, \rtt_{\sigma \sqrt f} )$, let us define $a$ and $b$ such that the contour function of $\rtt_{\sigma \sqrt f}$ visits a vertex of $(\rtt_{\sigma \sqrt f})_{\frac{3}{4}\sigma \sqrt f}$ at time $b$ but not at $b+1$, and visits a point of $(\rtt_{\sigma \sqrt f})_{\frac{3}{4}\sigma \sqrt f}$ at $\lfloor \sigma \sqrt{f}\rfloor- a$ but not at $\lfloor \sigma \sqrt{f}\rfloor- a-1$.
	
	All this gives that $E(f)$ depends only on $\left (I_{\sigma,f}^{\epsilon f^{1/4}},(\rtt_{\sigma \sqrt f})_{\frac{3}{4}\sigma \sqrt f}\right)$. 
	We define the event $E_{\infty}(f)$ as
	\begin{align*}
		\frac{f^{1/4}}{(\log(f))^\alpha}\leq diam((\rtm_{\infty})_{\frac{3}{4}\sigma \sqrt{f}})\leq \frac{\epsilon}{2} f^{1/4}.
	\end{align*}
	By Lemmas \ref{l.nuTree} and \ref{l.control_map_with_boundary} together with the definition of $T^k$, we see that the probability of the complement of $E(f)$ is upper bounded by
	\begin{align*}
		2\delta + K_{3/4,\delta}^T K^b_{\eta, \delta}\P((E_\infty(f))^c),
	\end{align*}
	where we first chose $\delta>0$, and then take $\eta<\delta$ such that $,(\rtt_{\sigma \sqrt f})_{\frac{3}{4}\sigma \sqrt f} \in T^{\sigma \sqrt f}$.

	The probability of the event $E_\infty(f)$ goes to $1$ as $f\to \infty$ thanks to  \Cref{p.max_jump} and the fact that with high probability, as $C\to \infty$, $(\rtm_{\infty})_{\frac{3}{4}\sigma \sqrt{f}}$ is contained in $\rtm_{\infty} (Cf^{1/4})$  and contains $\rtm_{\infty} (C^{-1}f^{1/4})$.
\end{proof}

\section{Finite continuous volume}\label{s.fc}
The objective of this section is to finally prove Theorem \ref{t.triviality-SM}. This result is proven in a way that is analogous to that of the finite discrete volume case, so we only give a quick idea of how to adapt the result. The key result that allows this is the following lemma.

\begin{lemma}\label{l.convergence_expectation}
	Take a sequence $(\P_n: n\in \N)$ and $(\Q_n: n \in \N)$  sequences of probability measures in a Polish space converging to $\P$ and $\Q$ respectively, where $\Q_n$ is absolutely continuous with respect to $\P_n$ ($\Q_n\ll \P_n$). Assume that $(X_n: n\in \N)$ is a sequence of random variables in the same probability space such that the law of $X_n$ is $\P_n$ and converge a.s. toward $X$ with law $\P$. Take $f$ and $g$ two continuous function such that\begin{align}\label{e.g_def}
		f(X_n)\frac{d \Q_n}{d\P_n}(X_n) \stackrel{L^1}{\to} g(X),	\end{align}
	where $g$ is a deterministic function. Then 
	\begin{align}\label{e.expected_value_g}
		\P\left[ g(X)\right] = \Q\left[f(X) \right].
	\end{align}
\end{lemma}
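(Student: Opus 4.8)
The plan is to transport $\int f\,d\Q_n$ to the common probability space through the Radon--Nikodym density, pass to the limit there using the assumed $L^1$ convergence, and separately use the weak convergence $\Q_n\Rightarrow\Q$ to identify $\lim_n\int f\,d\Q_n$ with $\Q[f(X)]$; equating the two expressions for the limit gives the claim.

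Write $\varphi_n:=\frac{d\Q_n}{d\P_n}$ and, on the common space $(\Omega,\mathbb P_\Omega)$ carrying all the $X_n$ and $X$, set $W_n:=\varphi_n(X_n)\ge 0$. The first ingredient is the pushforward identity: since $X_n$ has law $\P_n$ and $\Q_n\ll\P_n$, for every bounded measurable $\psi$ on the Polish space $\E_{\mathbb P_\Omega}[\psi(X_n)W_n]=\int\psi\,\varphi_n\,d\P_n=\int\psi\,d\Q_n$, and in particular $\E_{\mathbb P_\Omega}[W_n]=1$. Applying this with $\psi=f$ is legitimate because \eqref{e.g_def} forces $Z_n:=f(X_n)W_n\in L^1(\mathbb P_\Omega)$ (hence $f\in L^1(\Q_n)$), and gives $\int f\,d\Q_n=\E_{\mathbb P_\Omega}[Z_n]$ for all large $n$. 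Then $L^1$ convergence in \eqref{e.g_def} yields $\E_{\mathbb P_\Omega}[Z_n]\to\E_{\mathbb P_\Omega}[g(X)]$, and since $X$ has law $\P$ and $g(X)$ is an $L^1$ limit of integrable random variables, $\E_{\mathbb P_\Omega}[g(X)]=\int g\,d\P=\P[g(X)]$. Hence $\int f\,d\Q_n\to\P[g(X)]$.

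It remains to show $\int f\,d\Q_n\to\int f\,d\Q=\Q[f(X)]$. When $f$ is bounded this is immediate from $\Q_n\Rightarrow\Q$, so the only real point is uniform integrability of $\{f\}$ along $(\Q_n)$ in the general case, which I would obtain as follows. The reverse triangle inequality upgrades \eqref{e.g_def} to $|Z_n|\to|g(X)|$ in $L^1(\mathbb P_\Omega)$, so $\{|Z_n|\}_n$ is uniformly integrable on $\Omega$; on the other hand $f(X_n)\to f(X)$ a.s.\ makes the real sequence $(f(X_n))_n$ tight, i.e.\ $\sup_n\mathbb P_\Omega(|f(X_n)|>R)\to 0$ as $R\to\infty$. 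Combining the two, $\sup_n\int_{\{|f|>R\}}|f|\,d\Q_n=\sup_n\E_{\mathbb P_\Omega}\big[|Z_n|\,\mathbf{1}_{\{|f(X_n)|>R\}}\big]\to 0$ as $R\to\infty$. Given this, truncating $f$ at level $R$ and using $\Q_n\Rightarrow\Q$ for the bounded continuous truncation gives $\int f\,d\Q_n\to\int f\,d\Q$; comparing with the previous paragraph yields $\P[g(X)]=\Q[f(X)]$, which is \eqref{e.expected_value_g}.

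The genuinely delicate step is the last one: when $f$ is not assumed bounded one must prevent loss of mass in passing to the weak limit under $\Q_n$, and the $L^1$ control in \eqref{e.g_def} is precisely what supplies the required uniform integrability. Everything else is bookkeeping around the pushforward identity and the definition of $L^1$ convergence; the role of ``$g$ deterministic'' is merely to guarantee that the limit $g(X)$ is a bona fide function of $X$, so that $\E_{\mathbb P_\Omega}[g(X)]$ coincides with the integral $\P[g(X)]$ written in the statement.
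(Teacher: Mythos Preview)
Your argument is correct and follows the same two-step scheme as the paper: identify $\int f\,d\Q_n$ with $\E[f(X_n)\varphi_n(X_n)]$ and pass to $\P[g(X)]$ via the $L^1$ hypothesis, then separately show $\int f\,d\Q_n\to\Q[f]$ via $\Q_n\Rightarrow\Q$, and equate the two limits. The paper compresses this into a single triangle inequality and asserts the second term vanishes ``by the convergence of $\Q_n$ towards $\Q$''; it also writes $\frac{d\Q}{d\P}(X)$ without having stated $\Q\ll\P$. Your version is more careful on both points: you never invoke a Radon--Nikodym derivative for the limiting pair, and you supply the uniform-integrability argument (combining UI of $|Z_n|$ from the $L^1$ convergence with tightness of $f(X_n)$) needed to justify $\Q_n[f]\to\Q[f]$ when $f$ is unbounded, a step the paper leaves implicit.
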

\begin{proof}
	We have that $\left|\E\left[ g(X) - \frac{d \Q}{d\P}(X)f(X)\right]\right |$ is upper bounded by
	\begin{align*}
		&\left|\E\left[ g(X) - \frac{d \Q_n}{d\P_n}(X_n)f(X_n) \right]\right| +  \left|\E\left[ \frac{d\Q_n}{d\P_n}(X_n)f(X_n) - \frac{d \Q}{d\P}(X)f(X)\right]\right|.\end{align*}
	We conclude by noting that the left  term goes to $0$ by hypothesis, and the right one goes to $0$ by the convergence of $\Q_n$ towards $\Q$.
\end{proof}
Which allows us to show
\begin{lemma}\label{l.convergence_RND}
	In the context of Lemma \ref{l.convergence_expectation}, assume that there is $\mathcal O$ an open set of the Polish space, such that for every continuous function $f$ with compact support in $\mathcal O$ there is a $g$ such that \eqref{e.g_def} holds, then for every such $f$ a.s.
	\begin{align*}
		g(X)= f(X)\frac{d\Q}{d\P}
	\end{align*}
\end{lemma}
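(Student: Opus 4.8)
The plan is to deduce Lemma~\ref{l.convergence_RND} from Lemma~\ref{l.convergence_expectation} by a standard ``test-function'' argument. First I would fix $\mathcal O$ and let $f$ range over $C_c(\mathcal O)$, the continuous functions with compact support contained in $\mathcal O$. By hypothesis, for each such $f$ there is a deterministic continuous $g = g_f$ with \eqref{e.g_def}, and hence by Lemma~\ref{l.convergence_expectation} we get the identity $\P[g_f(X)] = \Q[f(X)]$. The key extra observation is that the map $f \mapsto g_f$ is linear: if $f = \lambda f_1 + \mu f_2$ with $f_1,f_2 \in C_c(\mathcal O)$, then by linearity of the left-hand side of \eqref{e.g_def} and uniqueness of $L^1$-limits we have $g_f = \lambda g_{f_1} + \mu g_{f_2}$ a.s.\ (as random variables, i.e.\ for $\P$-a.e.\ value of $X$); more precisely $g_f(X) = \lambda g_{f_1}(X) + \mu g_{f_2}(X)$ $\P$-a.s. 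So $\P[g_f(X)] = \Q[f(X)]$ defines a positive linear functional on $C_c(\mathcal O)$.

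The second step is to identify $g_f(X)$ with $f(X)\,\dfrac{d\Q}{d\P}(X)$. From Lemma~\ref{l.convergence_expectation} we know $\E[g_f(X)] = \E\big[f(X)\,\dfrac{d\Q}{d\P}(X)\big]$ for every $f \in C_c(\mathcal O)$. To upgrade this equality of expectations to an a.s.\ equality of the random variables, I would test against a rich enough family: for any $h \in C_c(\mathcal O)$ with $0 \le h \le 1$ and any $f \in C_c(\mathcal O)$, the product $hf$ is again in $C_c(\mathcal O)$, and applying the expectation identity to $hf$ gives
\begin{align*}
	\E\big[h(X)\,g_f(X)\big] &= \E\big[g_{hf}(X)\big] = \E\Big[h(X)f(X)\,\tfrac{d\Q}{d\P}(X)\Big],
\end{align*}
where the first equality uses that $g_{hf}(X) = h(X) g_f(X)$ $\P$-a.s.\ — this follows because multiplying the defining convergence \eqref{e.g_def} for $f$ by the bounded continuous factor $h(X_n) \to h(X)$ preserves the $L^1$ convergence, so $h(X_n) f(X_n)\,\frac{d\Q_n}{d\P_n}(X_n) \to h(X)g_f(X)$ in $L^1$, while it also converges to $g_{hf}(X)$. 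Since $h$ ranges over a family of bounded continuous functions on $\mathcal O$ that is measure-determining for the law of $X$ restricted to $\mathcal O$ (equivalently, separates points sufficiently to generate the Borel $\sigma$-algebra on a Polish space), the two integrands $g_f(X)$ and $f(X)\,\frac{d\Q}{d\P}(X)$, which are both $\sigma(X)$-measurable and integrable, must coincide $\P$-almost surely.

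I expect the only genuinely delicate point to be the measure-theoretic bookkeeping in this last step: verifying that testing against $\{h \in C_c(\mathcal O) : 0 \le h \le 1\}$ indeed forces a.s.\ equality, and making sure all the $L^1$-limit manipulations (stability of \eqref{e.g_def} under multiplication by convergent bounded continuous factors, uniqueness of limits, linearity) are justified. These are routine but must be stated carefully. Everything else — positivity and linearity of $f \mapsto \P[g_f(X)]$, and the invocation of Lemma~\ref{l.convergence_expectation} — is immediate. Once the equality $g_f(X) = f(X)\,\frac{d\Q}{d\P}$ is established $\P$-a.s.\ for each fixed $f \in C_c(\mathcal O)$, which is exactly the claimed conclusion, the lemma follows; note that the a.s.\ exceptional set is allowed to depend on $f$, matching the statement.
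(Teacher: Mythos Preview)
Your argument is correct. The only point you leave slightly implicit is that $g_f(X)$ vanishes a.s.\ on $\{X\notin\mathcal O\}$; but this drops out of your own identity $g_{hf}(X)=h(X)g_f(X)$ by taking $h\in C_c(\mathcal O)$ with $h\equiv 1$ on $\operatorname{supp} f$, so that $g_f(X)=h(X)g_f(X)$ and hence $g_f(X)=0$ outside $\operatorname{supp} h\subset\mathcal O$. With this, testing against $C_c(\mathcal O)$ is indeed enough to force a.s.\ equality.

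Your route differs from the paper's. The paper observes directly that on $\{f(X)\neq 0\}$ the ratio $\frac{d\Q_n}{d\P_n}(X_n)$ converges (along a subsequence extracted from the $L^1$ convergence) to $g_f(X)/f(X)$; since this limit does not depend on $f$, it defines a single function $R$ on $\{X\in\mathcal O\}$ with $g_f(X)=f(X)R$ for every $f\in C_c(\mathcal O)$, and then \eqref{e.expected_value_g} identifies $R$ with $\frac{d\Q}{d\P}$. Your approach bypasses the pointwise identification of $R$ and instead establishes the multiplicativity $g_{hf}=h\,g_f$ via stability of $L^1$ convergence under bounded a.s.\ convergent factors, then runs the test-function argument. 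The paper's argument is shorter and more conceptual (it makes explicit that the Radon--Nikodym derivatives themselves converge), while yours is more mechanical but avoids the subsequence extraction and the bookkeeping of where $g_f/f$ is defined. Both land on the same integral characterisation at the end.
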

\begin{proof}
	Note that \eqref{e.g_def} implies that $\frac{d \Q_n}{d\P_n}(X_n)$ converges a.s. to $g(X)/f(X)$ for any $X_n$ that converges to $X$ with  $f(X)\neq 0$. We conclude by using the characterisation of the integrals of the limit \eqref{e.expected_value_g}.
\end{proof}

We can finally give (an sketch of) the proof of Theorem \ref{t.triviality-SM}.
\begin{proof}[Proof of Theorem \ref{t.triviality-SM}]
	We follow as in Section \ref{s.f_volume_d_m}.
	\begin{itemize}
		\item \textbf{Typical case for the CRT is not unlikely for the infinite CRT} As before we explore a CRT with a marked point and we obtain a result analogous to that Lemma \ref{l.nuTree}, i.e. that this exploration is not unlikely to happen in the infinite CRT. This could be done directly in the continuous using the Brownian motion compared with a Brownian excursion. It also follows directly from Lemma \ref{l.convergence_RND}, as the Radon-Nikodym derivative is continuous with respect to the size of the leftover tree and the fact that one can make $T^k$ to be an open set.
		\item \textbf{Typical case for the Brownian disk is not unlikely for the Brownian half-plane} As before we mark a boundary point of the Brownian disk, explore it an obtain a result analogous to Lemma \ref{l.control_map_with_boundary}. To do that we use Lemma \ref{l.convergence_RND} for decorated metric spaces with the Gromov-Hausdorff-Uniform topology. As the explored sets are not themselves Brownian disks, we have to be careful. One just needs to restrict oneself to those maps that live in $Q^f$, in that case as the length of the boundary will remain bounded, one can have a subsequence where $\Q_{\infty,a,b,rf^{1/4}} \1_{Q_f}$ and $\Q_{,a,b,rf^{1/4}} \1_{Q^f}$ both converge. As the Radon-Nikodym derivative \ref{e.RND_Q} is continuous on the length of the map, we can apply \ref{l.convergence_RND} to conclude.
		\item \textbf{Typical case for the Shocked map near the boundary is not unlikely for the infinite Shocked map} For the final part, we note that the event where the diameter of the map is $0$ depends only on a small neighbourhood near the tree itself. And then we do as in the proof of Proposition \ref{p.TDM_small_diameter}, to see that we can do an exploration towards a mid-point of the tree and see that this exploration is not unlikely for the infinite shocked map and use Theorem \ref{t.main_section_3} to see that the diameter of the exploration of the tree is $0$. We then re-root our map and do the same exploration again, to conclude that the whole diameter of the tree is $0$.
	\end{itemize}
	To conclude the theorem notice that we just prove that after the glueing the CRT has diameter zero and since every path on the interior of the disk does not change its length, we can upper bound the distance of the glueing by the distance of the Brownian disk where the boundary is identified with one point.
\end{proof}

\appendix
\section{Markov property of the Brownian half-plane }
In this section, we discuss the Markov property of the Brownian half-plane on the special case where a filled-in ball is used. A general result of this type has already been announced in \cite{LR23}, however as the paper is not yet published we write a short proof here.
\begin{prop}\label{prop:markovBHP}
	Let $\HHH$ be a Brownian half-plane.  The filled-in ball with target point at infinity $B^{\bullet}_r(\HHH)$ and $\HHH'$ the complement with respect to $\HHH$, i.e. $\HHH' = \HHH\setminus B^\bullet_r(\HHH)$, are independent and moreover the $\HHH'$ properly rooted has the law of a BHP.
\end{prop}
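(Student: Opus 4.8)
The plan is to obtain the continuum statement as a scaling limit of the discrete Markov property of the $\UIHPQ$ described in Section \ref{ss.UIHPQ}, combined with the known convergence $\lambda\cdot\UIHPQ\to\HHH$ in the Local Gromov--Hausdorff topology from \cite[Theorem 3.6]{BGR19}. First I would set up the discrete analogue: in a $\UIHPQ$ $\qq$, rather than peeling a single quadrilateral around the root, one reveals the whole filled-in metric ball $B^\bullet_R(\qq)$ of radius $R$ with target point at infinity (iterating the one-step Markov property, or invoking the known layer-peeling / filled-in ball decomposition of the $\UIHPQ$). The output is that, conditionally on $B^\bullet_R(\qq)$, the unbounded complementary component, appropriately re-rooted on its boundary, is again a $\UIHPQ$ (of the same type, since the half-plane boundary structure is preserved), and is independent of the revealed ball. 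This is the discrete version of exactly the statement we want.

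Next I would pass to the scaling limit. Fix $r>0$ and let $R=R_\lambda=\lfloor r/\lambda\rfloor$ (up to the lattice-spacing constant relating graph distance to the limiting metric). The filled-in ball $\lambda\cdot B^\bullet_{R_\lambda}(\qq)$ converges jointly to $B^\bullet_r(\HHH)$, and the re-rooted complement converges to $\HHH\setminus B^\bullet_r(\HHH)$; the convergence is in the Local Gromov--Hausdorff(-Uniform) topology, using that the gluing of the two pieces along their common boundary is a continuous operation at this level (this is the same style of argument as Remark \ref{compGHU} plus continuity of boundary identifications). Since the discrete complement is a $\UIHPQ$ and $\lambda\cdot\UIHPQ\to\HHH$, the limit of the complement is a Brownian half-plane; since the discrete ball and complement are independent for every $\lambda$, independence is preserved in the limit. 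Care is needed here to track the root of the complement: one fixes a boundary marking convention (say the boundary point of $\HHH\setminus B^\bullet_r(\HHH)$ closest to the original root, or an appropriately defined ``last'' intersection point), check this is a measurable function that passes to the limit, and observe that by invariance of the $\UIHPQ$ under boundary re-rooting the particular convention does not affect the law.

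The main obstacle I expect is the measure-theoretic bookkeeping in the limit, not any new probabilistic input: one must show that the maps $\qq\mapsto(B^\bullet_{R_\lambda}(\qq),\ \text{re-rooted complement})$ converge to the corresponding continuum map $\HHH\mapsto(B^\bullet_r(\HHH),\ \HHH\setminus B^\bullet_r(\HHH))$ as measurable functions, i.e. that both projections are a.s. continuous with respect to the limiting law. The subtle points are (i) that $r$ should be chosen outside the at-most-countable set of ``bad'' radii where the boundary sphere $\partial B^\bullet_r(\HHH)$ is not well behaved (almost every $r$ works, and the statement for all $r$ then follows by monotone limits), and (ii) that the boundary length of $\partial B^\bullet_r(\HHH)$ is finite and varies continuously, so that the re-rooted complement is genuinely a half-plane-type object and not, say, a disk. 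Both are handled by standard properties of the Brownian half-plane metric; once they are in place, the continuous mapping theorem applied to the joint convergence $\lambda\cdot\UIHPQ\to\HHH$ delivers both the independence and the identification of the complement's law, completing the proof.
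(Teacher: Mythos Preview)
Your proposal is correct and follows essentially the same route as the paper: invoke the discrete filled-in ball Markov property for the $\UIHPQ$, and pass to the Brownian half-plane via the scaling limit $\lambda\cdot\UIHPQ\to\HHH$ from \cite{BGR19}, using that independence survives in the limit. The paper's proof is in fact much terser than yours and does not spell out the measurability/continuity issues (bad radii, re-rooting convention, continuity of the ball/complement decomposition) that you flag; your discussion of these points is a reasonable elaboration of what the paper takes for granted.
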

\begin{proof}
	We already know that this is the case for the UIHPQ, $\qq$. If we renormalise the UIHPQ $n^{-1}\qq$, we have the convergence for Gromov-Hausdorff local topology to $\HHH$. As the pair $B^{\bullet}_r(n^{-1}\qq)$, and $\qq\backslash B^{\bullet}_r(n^{-1}\qq)$ are independent and converge in law to $B^{\bullet}_r(n^{-1}\qq)$ together with $\HHH\backslash B^{\bullet}_r(n^{-1}\qq)$ we conclude.
\end{proof}

\section{Convergence of the map with a simple boundary}\label{a.b}

In our proofs we make reference of Theorem 1 in \cite{BCFS}, we present it for completeness. Consider $\tilde{Q}_{f,p}$ a random uniform quadrangulation with $f$ internal faces and with a simple boundary of length $2p$.

\begin{thm}[Theorem 1 \cite{BCFS}]\label{t.convergence BD}
	For a sequence $(p_f:n\in \N)$ satifying $p_f\sim 2\alpha\sqrt{2f}$, it holds
	\[
	\left( \frac{9}{8f} \right)^{1/4}\tilde{Q}_{f,p_f} \xrightarrow[\;\;f\rightarrow\infty\;\;]{(d)} \mathfrak{D}_{3\alpha}
	\]
	in distribution for the Gromov-Hausdorff topology.
\end{thm}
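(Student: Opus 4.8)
The theorem is quoted from \cite[Theorem 1]{BCFS}, so the shortest ``proof'' is a reference; let me instead indicate the route one would follow to prove it, and how the decorated version \Cref{t.BD} used in the body is extracted from it.

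\emph{Reduction to the free Boltzmann model.} The uniform law on simple-boundary quadrangulations with $2p$ boundary edges and $f$ internal faces is exactly the free Boltzmann law on simple-boundary quadrangulations of perimeter $2p$ (critical face weight $1/12$) conditioned on having exactly $f$ internal faces. So I would split the argument into: (i) the scaling limit of the free Boltzmann model, which is the content of \cite{GM} --- the free Boltzmann simple-boundary quadrangulation of perimeter $2p$, with distances multiplied by $p^{-1/2}$, converges in the Gromov--Hausdorff sense to the free Brownian disk of unit perimeter, jointly with the boundary curve and with the rescaled number of internal faces $p^{-2}F$ converging to the area $\mathcal A$ of that disk; and (ii) transferring this convergence through the conditioning $\{F=f\}$ in the window $p=p_f\sim 2\alpha\sqrt{2f}$.

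\emph{Transferring through the conditioning.} For (ii) I would prove a local limit theorem for $F$. Writing $q_{f,\ell}$ for the number of simple-boundary quadrangulations with $2\ell$ boundary edges and $f$ internal faces, the enumeration recalled in the proof of \Cref{l.control_map_with_boundary},
\[
q_{f,\ell}\ \sim\ \frac{\sqrt3}{2\pi}\,12^{f}\Big(\tfrac92\Big)^{\ell}f^{-5/2}\ell^{1/2}\exp\!\Big(-\tfrac{9\ell^{2}}{4f}\Big)\qquad (f,\ell\to\infty),
\]
gives, after normalising by the free-Boltzmann partition function of perimeter $2p_f$ and setting $\ell=p_f$, $m=\theta f$, that the density of $f^{-1}F$ under the free Boltzmann measure converges uniformly on compact subsets of $(0,\infty)$ to an explicit (inverse-gamma type) density, which is precisely the law of the area of the free Brownian disk of the corresponding perimeter. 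Hence conditioning on $\{F=f\}$ is, up to a bounded asymptotically continuous Radon--Nikodym factor, the same as conditioning the free Brownian disk on its area; plugging this into the absolute-continuity machinery of \Cref{l.convergence_expectation} and \Cref{l.convergence_RND} identifies the limit of $(9/(8f))^{1/4}\tilde Q_{f,p_f}$ as the free Brownian disk conditioned on unit area and on the appropriate perimeter, i.e. $\mathfrak D_{3\alpha}$. The numerical constants $9/8$ and $3$ are the standard discrete-to-continuum normalisations (read off from the exponential weights $12^{f}(9/2)^{\ell}$ and the label-variance constant) from \cite{BetM,GM}, and the topology-of-the-disk statement passes to the limit from those references. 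Finally, to get the decorated convergence of \Cref{t.BD} one adds the boundary curve $\gb$: it is equicontinuous (a sub-arc of $\S^1$ of length $\varepsilon$ carries $\sim\varepsilon p_f$ boundary edges, of diameter $\sim(\varepsilon p_f)^{1/2}f^{-1/4}\to\mathrm{const}\cdot\varepsilon^{1/2}$, uniformly in $f$), so by \Cref{compGHU} the sequence is precompact for the GHU topology and the subsequential limit curve is identified with $\Gb$ by the joint convergence in \cite{GM}.

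\emph{Main obstacle.} The delicate point is the joint local limit theorem: one must upgrade the Gromov--Hausdorff convergence of \cite{GM} so that it survives conditioning on the \emph{exact} number of faces --- i.e. it must hold jointly with a \emph{local}, not merely integral, control of $F$ --- and this control must be uniform enough in $p$ over the window $p\asymp\sqrt f$, the same uniformity in the perimeter that underlies the Radon--Nikodym bounds in \Cref{l.control_map_with_boundary} of the present paper. An equivalent route would bypass \cite{GM} and start from the general-boundary limit of \cite{BetM}, decomposing a uniform general-boundary quadrangulation into a simple-boundary ``core'' with independent Boltzmann quadrangulations grafted along the boundary, and then showing that all grafted pieces have diameter $o(f^{1/4})$ and total area $o(f)$ so that the core carries the full limiting metric space; inverting the corresponding generating-function identity yields the uniform simple-boundary statement. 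Either way the crux is the uniform-in-perimeter control of the enumeration, which is the technical contribution of \cite{BCFS}.
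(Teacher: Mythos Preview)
The statement is quoted from \cite{BCFS} and is not proved in the present paper, so strictly speaking there is no proof here to compare your proposal against; you rightly begin by noting this. That said, the appendix discussion surrounding the theorem (and the proof of \Cref{l.control_map_with_boundary}, which says ``our exploration is a fast-forward stage of the exploration used by them'') does reveal the strategy actually used in \cite{BCFS}, and it is \emph{not} either of the two routes you sketch.

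The \cite{BCFS} argument, as summarised in this appendix, goes through the $\varepsilon$-restrictions $\mathcal R_f^\varepsilon$: one performs a filled-in metric exploration of $\tilde Q_{f,p_f}$ from an interior point, targeted at a fixed boundary point (placed at $1/3$ of the perimeter), and stops when the exploration first hits the boundary arc of length $\varepsilon\cdot 2p_f$ preceding the target. On this explored region one has absolute continuity with respect to a model whose scaling limit is already known --- the same Radon--Nikodym computation as in \eqref{e.RND_Q} --- and the complement $\overline{\mathcal R}_f^\varepsilon$ is shown (via the volume and perimeter estimates in their Section~4.2) to have perimeter and area that go to zero with $\varepsilon$, uniformly in $f$. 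Letting $\varepsilon\to 0$ recovers the full map. This is the same ``typical for the finite model is not unlikely for the infinite model'' philosophy that drives \Cref{s.f_volume_d_m} of the present paper.

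Your main route (condition the free Boltzmann model of \cite{GM} on the exact face count via a local limit theorem) is perfectly sound in principle and you correctly isolate the crux --- uniformity in the perimeter of the local limit theorem --- but it is a genuinely different proof. Your alternative (core decomposition of the general-boundary model of \cite{BetM}) is also viable and has appeared elsewhere, but again is not what \cite{BCFS} does. What the $\varepsilon$-restriction approach buys over yours is that it sidesteps any exact-conditioning argument: one never needs a local limit theorem for $F$, only the ratio of partition functions on a good event, which is an easier (soft) estimate.

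For the upgrade to the decorated convergence of \Cref{t.BD}, your equicontinuity-plus-\Cref{compGHU} argument is correct and matches what the paper implicitly does; the appendix simply remarks that the GHPU strengthening follows because the perimeter and area of $\overline{\mathcal R}_f^\varepsilon$ are controlled.
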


In fact, here we use a strengthened version of this theorem where we put into play the Gromov-Hausdorff-Prohorov-Uniform distance (see \cite[Eq. (1.3)]{GM}) instead of the Gromov-Hausdorff distance.

The Gromov-Hausdorff-Prohorov-Uniform topology keeps track of both the area and perimeter measures of the map. The reason why this generalization of Theorem 1 \cite{BCFS} holds is the following. We studied the $\varepsilon$-restrictions ($\mathcal{R}_f^\varepsilon$)\footnote{Defined in Section 2.2. of \cite{BCFS}.} which are roughly the filled-in explorations started from an interior vertex with target point placed at 1/3 of the counter-clockwise perimeter and stopped the first time the exploration hits a point in between $1/3-\varepsilon$ and $1/3$ of the perimeter (see \cref{fig:1}). In order to control the perimeter and area of the complement of the $\varepsilon$-restriction ($\overline{\mathcal{R}}_f^\varepsilon$) we proved that with high probability they have the same order as the perimeter and area of the map, respectively, and they both go to zero as $\varepsilon$ goes to zero\footnote{This is a consequence of the volume and perimeter estimates given in Section 4.2 \cite{BCFS}} .

\begin{figure}[h!]
	\includegraphics[scale = 0.5]{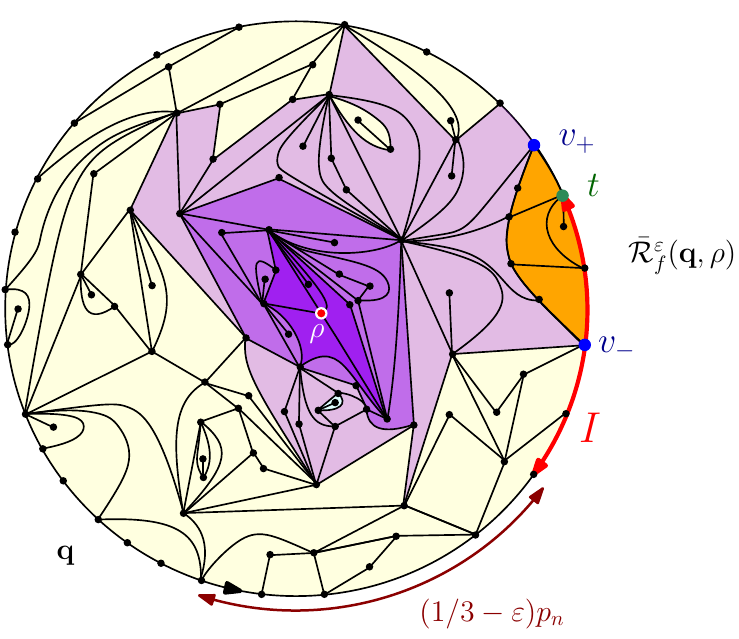}
	\caption{The exploration starts at $\rho$ and targets the green point $t$, it grows from lighter to darker purple until it hits for the first time the red segment at the point $v_-$, where the exploration stops. The complement of the restriction is highlighted in orange. The counter-clockwise perimeter starting from $t$ and going to $v_+$}
	\label{fig:1}
\end{figure}

\bibliographystyle{alpha} 
\bibliography{biblio}
\appendix

\end{document}